\documentclass{amsart}
\usepackage{graphicx} 
\usepackage{amsmath}
\usepackage{bm}
\usepackage{amsfonts}
\usepackage{amssymb}
\usepackage{color}
\usepackage{mathtools}
\usepackage{amsthm}
\usepackage{dsfont}
\usepackage{enumitem}
\usepackage[numbers]{natbib}

\usepackage[
  a4paper,
  top=40mm,
  bottom=40mm,
  left=30mm,
  right=30mm
]{geometry}

\usepackage{setspace}
\newcommand\bR{{\mathbb{R}}}

\newcommand\bN{{\mathbb{N}}}

\newcommand\cA{{\mathcal{A}}} 
\newcommand\cC{{\mathcal{C}}}

\newcommand\cL{{\mathcal{L}}}
\newcommand\cF{{\mathcal{F}}}
\newcommand\cK{{\mathcal{K}}}
\newcommand\cT{{\mathcal{T}}}
\newcommand\cP{{\mathcal{P}}}

\newcommand\cN{{\mathcal{N}}}

\newcommand{\eps}{\varepsilon}

\allowdisplaybreaks 

\theoremstyle{plain}
\newtheorem{thm}{Theorem}
\newtheorem{lem}[thm]{Lemma}
\newtheorem{prop}[thm]{Proposition}

\theoremstyle{definition}

\newtheorem{ass}[thm]{Assumption}

\theoremstyle{remark}

\begin{document}

\title[Pulse solution in a bulk--surface system]{Existence of pulse solutions in a bulk--surface reaction--diffusion system for cell polarization}

\author[R. Watanabe]{Riku Watanabe}
\address{Department of Mathematics, Faculty of Science, Hokkaido University, Hokkaido, 060-0810, Japan}
\email{watanabe.riku.y9@elms.hokudai.ac.jp}

\keywords{{\em bulk--surface system, dynamic boundary condition, pulse solutions, geometric effects, cell polarization, wave-pinning}}
\subjclass[2020]{35K57, 35B25, 35B36, 35R09}
\begin{abstract}
In this paper, we prove the existence of stationary pulse solutions for a bulk–surface reaction–diffusion system modeling cell polarization in two-dimensional domains. Using a singular perturbation method, we construct pulse solutions whose centers lie near nondegenerate critical points of a geometry-dependent potential expressed in terms of the Neumann Green's function, as formally predicted in \cite{watanabe2026}. The main analytical difficulty arises from the half--Laplacian induced by the logarithmic singularity of the Green's function, resulting in algebraically rather than exponentially decaying inner solutions.
Our analysis may also serve as a useful guide for applying singular perturbation methods to other bulk–surface type systems.
\end{abstract}

\maketitle

\section{Introduction}\label{sec:intro}
In this paper, we consider the following bulk--surface
reaction--diffusion system:
\begin{equation}\label{eq:model}
\left\{
\begin{aligned}
\eps u_t &= \eps^2 u_{ss} + f(u,v)
&&\text{on }\Gamma,\\
\eps v_t &= D\Delta v
&&\text{in }\Omega,\\
D\partial_n v &= -f(u,v)
&&\text{on }\Gamma,\\
\partial_n v &= 0
&&\text{on }\Gamma_j\quad(j=1,\cdots,N).
\end{aligned}
\right.
\end{equation}
Here, $\Omega\subset\mathbb{R}^2$ is a smooth bounded domain
whose boundary consists of $N+1$ disjoint simple closed curves,
$\partial\Omega=\Gamma\cup\Gamma_1\cup\cdots\cup\Gamma_N$,
where $N\ge0$.
The component $\Gamma$ is the reactive boundary,
whereas $\Gamma_1,\dots,\Gamma_N$ are impermeable.
The variable $s$ denotes the arc-length coordinate on $\Gamma$,
and $\partial_n$ denotes the outward normal derivative.
Here, $D>0$ and $0<\eps\ll1$, and the reaction term $f$
is of bistable type. Precise assumptions are given in Section~2.

In the standard biological interpretation, $u$ represents the concentration
of active proteins on the cell membrane and $v$ the concentration of inactive
proteins in the cytosol.
Since the diffusion of $u$ is known to be much slower than that of $v$, the diffusion coefficient is scaled using the small parameter $\eps$. The coupling condition on $\Gamma$, together with the no-flux conditions on $\Gamma_j$, gives rise to the following mass conservation law:
\begin{align}\label{eq:mass-conservation}
    M:=\int_\Gamma u(0,s)ds+\int_\Omega v(0,x)dx=\int_\Gamma u(t,s)ds+\int_\Omega v(t,x)dx\quad (t>0).
\end{align}
Mass conservation plays a key role in wave-pinning, a well-known mechanism for cell polarization \cite{mori2008}. While conventional mass-conserved reaction--diffusion models treat both components on the same domain, bulk--surface models explicitly incorporate diffusion in the cytosol and the effects of cell geometry \cite{giese2015,ramirez2015,cusseddu2019}.

We briefly review previous studies of \eqref{eq:model} and related models. For spherically symmetric domains $\Omega$, conditions for Turing instability have been investigated. These studies show that, depending on the domain geometry, a spatially homogeneous steady state can become unstable even when the two components have equal diffusion coefficients \cite{borgqvist2021,morita2020,ratz2014}. For general domains, the difficulty of rigorous analysis has motivated studies of the shadow system obtained as $D\to\infty$, as well as numerical investigations \cite{miller2023}. In this limit, $v$ becomes spatially uniform, and the system has been shown to converge to the nonlocal scalar equation
\begin{align*}
    \eps u_t=\eps^2\Delta_\Gamma u + f\bigg(u, \frac M{|\Omega|}-\frac1{|\Omega|}\int_\Gamma u(\sigma)d\sigma\bigg)
\end{align*}
known as the shadow system, which simplifies the analysis \cite{hausberg2018}. However, making $v$ spatially uniform also makes the nonlocal coupling through the bulk uniform. Numerical studies have indeed demonstrated that the dynamics of the shadow system can differ from those of the original system with finite $D$ \cite{cusseddu2022}. In particular, for the two-dimensional system considered here, the detailed effects of domain shape disappear. 

Thus, although \eqref{eq:model} incorporates the effects of cell geometry, its mathematical analysis for finite $D$ and general domains remains limited. 
In \cite{watanabe2026}, the author formally derived the ordinary differential equation \eqref{eq:ODE} describing the dynamics of pulse solutions driven by domain asymmetry. The resulting equation shows that the pulses drift according to the gradient flow of the potential function $E$ defined in \eqref{eq:E-def}. 
\eqref{eq:ODE} predicts that the critical points of $E$ correspond to stationary pulse positions. 
However, the derivation of this ordinary differential equation relies on formal arguments including a quasi-steady approximation.

This study therefore rigorously proves the existence of stationary pulse solutions near every nondegenerate critical point of the potential function $E$. Our construction is related to the singular perturbation framework developed for localized solutions of Gierer–Meinhardt systems on complex geometries. For example, spike solutions for such systems have been constructed on two-dimensional manifolds \cite{tse2010} and on Y-shaped graphs \cite{ishii2021}. A crucial difference from those systems is the rate at which the approximate profiles decay. In Gierer–Meinhardt systems, the approximate profiles decay exponentially, whereas in the present problem they decay only algebraically due to the half--Laplacian arising from the two-dimensional Neumann Green's function. Consequently, terms that are negligible in the case of exponential decay must be carefully estimated. We overcome this difficulty by establishing precise decay estimates for the approximate profiles (Proposition~\ref{prop:q_eps}). 

This paper is organized as follows. In Section~\ref{sec:main-result}, we describe the problem setting and state the main result. In Section~\ref{sec:auxiliary-problem}, we introduce the auxiliary operator and functions needed to construct approximate solutions. In Section~\ref{sec:construction}, we construct the approximate solutions and conduct error estimates. In Section~\ref{sec:reduction}, we reduce the problem to a finite-dimensional system using the Lyapunov-Schmidt method and finish the proof of the main theorem.

\section{Main result}\label{sec:main-result}
We first describe the problem setting before stating the main result. 
Let $L=|\Gamma|$ and let
$\gamma:\mathbb{R}/L\mathbb{Z}\to\Gamma$
be an arc-length parametrization.
We identify $\Gamma$ with $\mathbb{R}/L\mathbb{Z}$
through $\gamma$ and write $s\in[0,L)$
for the arc-length coordinate.
The periodic distance on $\Gamma$ is defined by
$d_\Gamma(s,\sigma)
=\min\{|s-\sigma|,L-|s-\sigma|\}
\quad (s,\sigma\in[0,L))$.
In \eqref{eq:model}, $v$ appearing in
$f(u,v)$ on $\Gamma$ is understood as
the trace $v(t,\gamma(s))$.

When $N=0$, the model describes the standard setting of a cytosol enclosed by a cell membrane. When $N\ge 1$, we also allow the cytosolic domain to contain holes. Our formulation further includes the mathematically possible case in which $u$ is defined on an inner boundary component while the outer boundary is impermeable, although we do not attach a biological interpretation to this configuration.

We assume that the nonlinearity $f$ satisfies the following conditions. 
\begin{ass}\label{ass:f}
\begin{enumerate}
\item (Bistability)
The function $f(u,v)$ is smooth on $\bR^2$. There exist constants $v_\mathrm{min}$ and $v_\mathrm{max}$ such that, for each fixed $v\in(v_\mathrm{min},v_\mathrm{max})=:I_f$, the equation $f(u,v)=0$ has exactly three roots
$h_-(v)<h_0(v)<h_+(v)$ satisfying
\[
f_{u}\!\bigl(h_\pm(v),v\bigr)<0,\qquad
f_{u}\!\bigl(h_0(v),v\bigr)>0
\]

\item (Mass Balance Condition)
Define
\[
J(v):=\int_{h_-(v)}^{h_+(v)} f(u,v)\,du \qquad (v\in I_f).
\]
There exists an isolated root $v=v_*\in I_f$ of $J(v)=0$ satisfying $J'(v_*)\neq0$. 
\item (Stability of homogeneous states)
For every $v\in I_f$, we have
\[
f_{u}\!\bigl(h_\pm(v),v\bigr)<0< f_{v}\!\bigl(h_\pm(v),v\bigr).
\]

Under this assumption, it has been shown that the homogeneous states $(u,v)=(h_\pm(v),v)$ are stable for $v\in I_f$ \cite{mori2011}. 
\end{enumerate}
\end{ass}

For example, the reaction term
\[
f(u,v)=\Bigl(k_0+\gamma_0\frac{u^2}{1+u^2}\Bigr)v-u,\quad \gamma_0>8k_0>0
\]
satisfies these assumptions. 
Differentiating $f(h_\pm(v),v)=0$ gives
\begin{align}\label{eq:fu-fv-relation}
    h_\pm'(v)f_u(h_\pm(v),v)+f_v(h_\pm(v),v)=0
\end{align}
for $v\in I_f$. 
For convenience, we define $\Delta h(v):=h_+(v)-h_-(v)$. 

We consider the stationary problem associated with \eqref{eq:model}: 
\begin{equation}\label{eq:steady}
\left\{
\begin{aligned}
0 &= \eps^2 u'' + f(u,v)\quad &&\text{on }\Gamma,\\
0 &= \Delta v \quad &&\text{in }\Omega,\\
D\partial_n v &= -f(u,v)\quad &&\text{on }\Gamma,\\
\partial_n v &= 0&&\text{on }\Gamma_j\quad(j=1,\cdots, N).
\end{aligned}
\right.
\end{equation}
Using a singular perturbation method, we construct stationary pulse solutions for which $u$ has transition layers near two points $s=s_1,s_2$ with $0<s_1<s_2<L$. In particular, we derive an equation determining the pulse center. 
We define the pulse half-width by $w:=(s_2-s_1)/2$ and the pulse center by $s_0:=(s_2+s_1)/2$ (see Figure~\ref{fig:sim}).

\begin{figure}[t]
\centering
\includegraphics[width=0.5\textwidth]{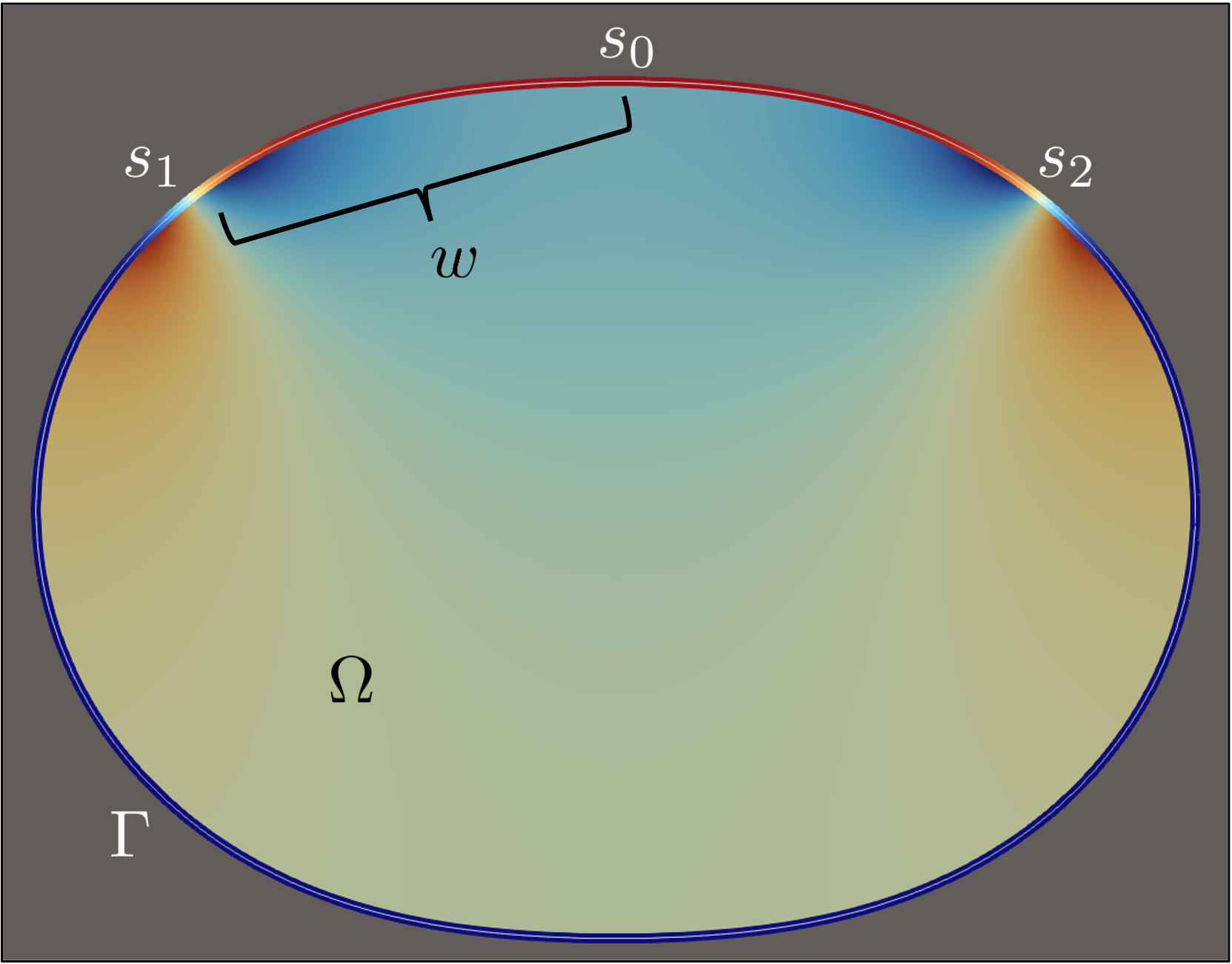}
\caption{
Schematic illustration of a stationary pulse solution. The membrane variable $u$ forms a pulse on $\Gamma$ with transition layers located near $s=s_1$ and $s=s_2$. The pulse center and half-width are denoted by $s_0=(s_1+s_2)/2$ and $w=(s_2-s_1)/2$, respectively. The bulk variable $v$ is defined in $\Omega$.
}\label{fig:sim}
\end{figure}

The Neumann Green's function is defined as the unique solution of
\begin{align*}
\begin{aligned}
\begin{cases}
\Delta_x G=\frac{1}{|\Omega|}-\delta(x-y)\quad x\in\Omega,\\
\partial_{n_x}G(x,y)=0\quad x\in\partial\Omega,\\
\int_\Omega G(x,y)dx=0    .
\end{cases}
\end{aligned}
\end{align*}
We define its restriction to $\Gamma$ by $G_\Gamma(s,\sigma):=G(\gamma(s),\gamma(\sigma))$. It admits the decomposition
\begin{align*}
    G_\Gamma(s,\sigma)=-\frac{1}{\pi}\log|\gamma(s)-\gamma(\sigma)|+H_\Gamma(s,\sigma)\quad\text{for }s,\sigma\in[0,L)
\end{align*}
into singular and regular parts. 

We assume that the total mass $M$ lies in the admissible range
\begin{align*}
    M\in\bigl(v_*|\Omega|+Lh_-(v_*),\ v_*|\Omega|+Lh_+(v_*)\bigr).
\end{align*}
Under this assumption, the leading-order pulse half-width
\begin{align*}
        w_*:=\frac{1}{2\Delta h(v_*)}\big(M-|\Omega|v_*-Lh_-(v_*)\big)
\end{align*}
satisfies $0<w_*<L/2$. 

Our main result is as follows: 
\begin{thm}\label{thm:main}
    Define the function
    \begin{align}\label{eq:E-def}
        E(s;w)=2G_\Gamma(s-w,s+w)-H_\Gamma(s+w,s+w)-H_\Gamma(s-w,s-w).
    \end{align}
    Let $s_{0,*}\in[0,L)$ be a nondegenerate critical point of $E(\cdot;w_*)$. 
    Then, for  sufficiently small $\eps$, \eqref{eq:steady} admits a solution $(u,v)=(U_\eps,V_\eps)\in C^\infty(\Gamma)\times C^\infty(\bar\Omega)$ with the following properties: 
    \begin{itemize}
    \item For any $\Delta>0$ and any $0<\rho<\min(w_*,L/2-w_*)/2$, the inequalities
        \begin{align}\label{eq:U_eps-conv}
            \begin{cases}
                |U_\eps(s)-h_+(v_*)|<\Delta\quad (d_\Gamma(s,s_{0,*})<w_*-\rho)\\
                |U_\eps(s)-h_-(v_*)|<\Delta\quad (d_\Gamma(s,s_{0,*})>w_*+\rho)
            \end{cases}
        \end{align}
    hold for  sufficiently small $\eps$. 
        \item 
        The representation
        \begin{align}\label{eq:V_eps-rep}
            V_\eps(x)=\frac{M}{|\Omega|}-\frac1{|\Omega|}\int_\Gamma U_\eps(\sigma)d\sigma+\frac{\eps^2}{D}\int_\Gamma G(x,\gamma(\sigma))U_\eps''(\sigma)d\sigma\quad (x\in\bar\Omega)
        \end{align}
    holds. Moreover, $V_\eps$ satisfies
        \begin{align}\label{eq:V_eps-conv}
            \|V_\eps-v_*\|_{L^\infty(\bar\Omega)}=O(\eps)\quad (\eps\downarrow0).
        \end{align}
    \end{itemize}
\end{thm}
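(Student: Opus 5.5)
We reduce \eqref{eq:steady} to a single nonlocal equation for $u$ on $\Gamma$ and then carry out a Lyapunov--Schmidt reduction around an approximate two-layer profile.

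\textbf{Reduction to a nonlocal scalar problem.} Given $u$, we solve the bulk problem for $v$. Integrating the boundary condition and using the mass constraint fixes the mean of $v$. Replacing $f(u,v)$ on $\Gamma$ by $-\eps^2u''$ and using Green's representation with the Neumann Green's function $G$ gives exactly \eqref{eq:V_eps-rep}. Taking the trace, the problem becomes
\[
\eps^2u''+f\Bigl(u,\;\bar v[u]+\tfrac{\eps^2}{D}\int_\Gamma G_\Gamma(s,\sigma)u''(\sigma)\,d\sigma\Bigr)=0,\qquad \bar v[u]=\tfrac{M}{|\Omega|}-\tfrac1{|\Omega|}\int_\Gamma u .
\]
Integrating by parts twice, the log singularity of $G_\Gamma$ turns $\eps^2\int G_\Gamma u''$ into $\eps$ times a half-Laplacian of the rescaled profile (of order $\eps\log\eps$ plus $O(\eps)$ terms). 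Consequently the $v$-correction near the layers is small, and \eqref{eq:V_eps-conv} will follow once $u$ is close to the approximate solution.

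\textbf{Approximate solution.} For parameters $(s_1,s_2)$ near $(s_{0,*}\mp w_*)$ and a level $v$ near $v_*$, we glue heteroclinic profiles $\phi((s-s_i)/\eps)$ of $\phi''+f(\phi,v)=0$ connecting $h_\pm(v)$. We then add first-order correctors solving the linearized equation $L\psi=\phi''+f_u(\phi,v)\psi=$ (the $v$-perturbation). These are the auxiliary functions of Section~\ref{sec:auxiliary-problem}. Because the $v$-perturbation is built from the Green's function, it decays only algebraically away from each layer; Proposition~\ref{prop:q_eps} supplies the needed decay bounds. The constant $v$ is fixed by mass conservation, which gives $w\approx w_*$ at leading order.

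\textbf{Lyapunov--Schmidt reduction.} Linearizing at the approximate solution, the operator is $\eps^2\partial_s^2+f_u+\text{(nonlocal rank-type terms)}$. Its approximate kernel is spanned by $\phi'((s-s_i)/\eps)$, $i=1,2$. On the complement of the approximate kernel, we prove a uniform invertibility estimate by the standard contradiction/blow-up argument. The nonlocal part is a compact perturbation controlled by $\eps$, using Assumption~\ref{ass:f}(3) and $J'(v_*)\ne0$ to handle the mean mode. The contraction mapping theorem then solves the projected problem with a remainder much smaller than the reduction terms.

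The remaining two-dimensional system comes from projecting onto $\phi'$ at each layer. The leading term is $J(v_i)$, where $v_i$ is the local $v$-level at $s_i$. Expanding $v_i$, the $O(1)$ part fixes $w$ through $J(v_*)=0$. At next order, the difference between the two layers produces the derivative in $s_0$ of the interaction term $E(s_0;w)$; this is the combination of $G_\Gamma$ between the layers and the self-interactions $H_\Gamma$ at each layer. The reduced equations read $w-w_*=o(1)$ and $\partial_sE(s_0;w_*)+o(1)=0$, up to a nonzero factor. Since $s_{0,*}$ is a nondegenerate critical point, a degree (or implicit function) argument yields a zero $(s_0,w)$ near $(s_{0,*},w_*)$. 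Smoothness follows by elliptic bootstrapping, and \eqref{eq:U_eps-conv} follows from the profile structure.

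\textbf{Main obstacle.} The hardest step is the error estimate of the projected equations. Because the correctors decay only algebraically, the cross terms between the two layers and the tails are of order $\eps$ times powers of $\log\eps$. These are not exponentially negligible, and they must be shown to be either smaller than the $E$-derivative term or symmetric in the two layers so that they cancel. I would attack this first by using the decay rates from Proposition~\ref{prop:q_eps} in weighted sup-norms, working with $\|\cdot\|_{L^\infty}$ weighted by $(1+|s-s_i|/\eps)^{\alpha}$.
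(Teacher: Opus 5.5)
Your architecture is the paper's: a scalar nonlocal equation on $\Gamma$, a two-layer ansatz, a Lyapunov--Schmidt reduction with approximate kernel spanned by the layer derivatives, reduced equations giving mass balance and $E'(s_0;w_*)=0$, a degree argument, and bootstrapping. The gap is the error estimate for the projected equations. You never show they have the form you state, and your own size estimate would defeat the argument.

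The geometric term $R_{1,*}-R_{2,*}=\frac{\Delta h(v_*)}{2D}E'(s_0;w)$ enters the local $v$-level only at order $\eps^2$, which is order $\eps^3$ after pairing with $Z_i$. If the tail and cross-layer terms were of size $\eps|\log\eps|^k$, as you expect, they would swamp it. You leave ``smaller, or symmetric so they cancel'' as an open alternative rather than proving either.

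The ansatz also needs repair. Heteroclinics of $\phi''+f(\phi,v)=0$ joining $h_\pm(v)$ exist only when $J(v)=0$, so near $v_*$ only $v=v_*$ is admissible. Moreover, Section~\ref{sec:auxiliary-problem} does not construct a linear corrector of $U$. If you use $U+\eps\psi$, you need separate decay bounds for $\psi$ and $\cP\psi$, and Proposition~\ref{prop:q_eps} does not provide them.

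The missing idea is the paper's choice of inner profile.
\begin{enumerate}[label=(\alph*)]
\item The profile $q_\eps$ solves the nonlocal problem \eqref{eq:auxiliary-problem}, with $\bar v_\eps$ fixed by the implicit function theorem using $\langle U',\tilde b\rangle=J'(v_*)\neq0$. The $\xi$-dependent $O(\eps)$ self-interaction $\frac{\eps}{D}\cP q_\eps$ therefore cancels identically in the inner residual. It is also identical at the two layers, since the second copy is $q_\eps((s_2-s)/\eps)$ and $\cP$ commutes with reflection.
\item A comparison principle for $-\partial_\xi^2+a-b\cP$ (Lemma~\ref{lem:g-estimate}) gives $\partial_\xi^j\cP q_\eps=O(|\xi|^{-j-1})$. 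A second comparison with $e^{-\gamma_U|\xi|}+\eps|\xi|^{-j-1}$ gives $q_\eps-h_\pm(\bar v_\eps)=O(\eps|\xi|^{-1}+e^{-\gamma_U|\xi|})$.
\item The factor $\eps$ in that bound is decisive, because the tails at distance $\delta$ are then $O(\eps^2)$. Consequently the $\log\eps$ piece of $G_\Gamma$ multiplies $\int q_\eps''=O(\eps^3)$ and drops out, so there is no $\eps\log\eps$ term at all. The truncation and gluing contributions to $v$ are $O(\eps^4)$.
\item Since $\int_{|\eta|<\delta/\eps}|\eta q_\eps'|\,d\eta=O(1)$, the regular self-interaction and the cross-layer integral reduce to $\eps^2R_{i,\eps}$ up to $O(\eps^3(1+|\xi|))$.
\end{enumerate}
This yields $\langle Z_i,F_{\eps,\mu}(u_\eps)\rangle=\eps^3(\mu+R_{i,*})J'(v_*)+O(\eps^4)$ (Lemma~\ref{lem:Z-F-estimate}) directly, with no cancellation argument needed.

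A lesser point concerns the mean mode. Building mass conservation into the operator, as you do, puts the rank-one term $-\frac{1}{|\Omega|}f_v\int_\Gamma\phi$ into the linearization. That term is $O(1)$ and not controlled by $\eps$, and your appeal to Assumption~\ref{ass:f}(3) for it is not carried out. The paper avoids this by treating the mean $\bar v_\eps+\eps^2\mu$ as a third unknown and adding the mass constraint as a third reduced equation. The only nonlocal term left is then $f_v\cT_\eps$, with $\|\cT_\eps\phi\|_{L^2}\le C\eps\|\phi\|_{H^2_\eps}$, so a Neumann series suffices.
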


In \cite{watanabe2026}, the author investigated the dynamics of pulse solutions of the original evolution problem \eqref{eq:model} and formally derived the following leading-order equation for the time-dependent pulse center $s_0(t)$: 
\begin{align}\label{eq:ODE}
    \frac{d}{dt}s_0(t)=-\eps^2\frac{\Delta h(v_*)J'(v_*)}{4D\|U'\|_{L^2(\bR)}^2}E'(s_0(t);w_*)
\end{align}
Here, $U$ is defined in \eqref{eq:U-def}. This equation describes pulse motion on the slow time scale $t=O(\eps^{-2})$. Thus, when $J'(v_*)>0$, which is the condition for stable wave-pinning, the pulse follows the gradient flow of $E(\cdot;w_*)$. 
Nevertheless, \eqref{eq:ODE} is a leading-order equation and does not itself guarantee the existence of stationary pulse solutions at critical points of $E$. Theorem~\ref{thm:main} rigorously justifies this prediction for nondegenerate critical points. Near degenerate critical points, dynamics on slower time scales may occur. 
A probabilistic interpretation of the potential function $E$ and its profiles for specific domain geometries were investigated rigorously in \cite{watanabe2026} using conformal mapping techniques, but we do not discuss these here.

\textbf{Proof strategy.}
First, we construct an approximate profile $q_\eps$ for the transition layers, which plays a central role in our analysis. A key point is to obtain precise decay estimates for $q_\eps$ and the associated singular integral $\cP q_\eps$. The introduction of this profile, together with the identification and analysis of its algebraic decay, is one of the main analytical contributions of this paper. Second, two copies of $q_\eps$ are glued to construct the approximate solution $u_\eps$, and we define $v_\eps$ using the Green's function representation. The decay estimates for $q_\eps$ and $\cP q_\eps$ are then used to obtain sufficiently sharp estimates for the residuals of these approximate solutions. Finally, we apply a Lyapunov--Schmidt reduction to construct a small correction to the approximate solution and reduce the original problem to a finite-dimensional system for the pulse width, pulse center, and the spatial average of $v$.

\section{Auxiliary problem}\label{sec:auxiliary-problem}
Define the linear operator $\cP$ by
\begin{align*}
    (\cP q)(\xi)
    &:=-\frac1\pi\int_\bR\log|\xi-\eta|q''(\eta)\,d\eta .
\end{align*}
For sufficiently regular $q$ with suitable behavior at infinity, integration by parts and a change of variables yield
\begin{align*}
    (\cP q)(\xi)
    &=-\frac1\pi \mathrm{p.v.}\int_\bR
    \frac{q'(\eta)}{\xi-\eta}\,d\eta =\frac1\pi \mathrm{p.v.}\int_\bR
    \frac{q(\xi+z)-q(\xi)}{z^2}\,dz .
\end{align*}
The operator $\cP$ is the negative half-Laplacian $-(-\partial_\xi^2)^{1/2}$.
In particular, it is a bounded linear operator $\cP:H^m(\bR)\to H^{m-1}(\bR)$
for every $m\in\bR$. 
We will use these identities throughout this section without further comment. 
This operator represents the leading-order contribution of the logarithmic singularity of the Neumann Green's function on the inner scale.
The appearance of this operator is a distinctive feature of \eqref{eq:model} and is one source of analytical difficulty. 

Define $U$ as the solution of
\begin{align}\label{eq:U-def}
    U''+f(U,v_*)=0 \quad (\xi\in\bR),\quad U(\pm\infty)=h_\pm(v_*),\quad U(0)=h_0(v_*)
\end{align}
Problem \eqref{eq:U-def} has a unique solution $U$, which is strictly increasing. Moreover, there exist constants $\gamma_U,C_m>0$ such that $\big|\partial_\xi^m(U-h_\pm(v_*))\big|\le C_me^{-\gamma_U|\xi|}$ for every $m\in\bN_0$ \cite[Lemma~2.1]{fife1973}. 

Define the linear operator $\cL:H^2(\bR)\to L^2(\bR)$ by
$\cL q
:=q''+f_u\big(U,v_*\big)q$.
It is well known that
\begin{align}\label{eq:cL-character}
\cL U'=0,\qquad
\ker\cL
=\operatorname{span}{U'}.
\end{align}

We introduce the following auxiliary problem for $q$ and $\bar v$: 
\begin{align}\label{eq:auxiliary-problem}
    \left\{
    \begin{aligned}
        &q''+f\big(q, \bar v + \frac{\eps}{D}\cP q\big)=0\quad \text{on }\bR,\\
        &q(\pm\infty)=h_\pm(\bar v),\ q(0)=U(0).
    \end{aligned}
    \right.
\end{align}
The aim of this section is to prove the following proposition. 
\begin{prop}\label{prop:q_eps}
    For  sufficiently small $\eps$, \eqref{eq:auxiliary-problem} admits a solution $(q,\bar v)=(q_\eps,\bar v_\eps)\in C^\infty(\bR)\times I_f$ which satisfies
    \begin{align}\label{eq:q_eps-convergence}
        \|q_\eps-U\|_{W^{1,\infty}(\bR)}+|\bar v_\eps-v_*|=O(\eps)\quad (\eps\downarrow0).
    \end{align}
    For every integer $j\ge0$, $q_\eps$ satisfies
    \begin{align}\label{eq:q_eps-derivative-estimate}
        \partial_\xi^j\big(q_\eps(\xi)-h_\pm(\bar v_\eps)\big)
        =O\big(\eps|\xi|^{-j-1}+e^{-\gamma_U|\xi|}\big)
    \end{align}
    as $\xi\to\pm\infty$. 
    Furthermore, for every integer $j\ge0$, as $|\xi|\to\infty$, 
    \begin{align}\label{eq:cP-q_eps-expansion}
        \partial_\xi^j\big(\cP q_\eps\big)(\xi)=O\big(|\xi|^{-j-1}\big).
    \end{align}
\end{prop}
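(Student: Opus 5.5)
We construct $(q_\eps,\bar v_\eps)$ by a perturbation (implicit function / contraction) argument around $(U,v_*)$. Write $q=U_{\bar v}+\phi$, where $U_{\bar v}$ is the heteroclinic of $U''+f(U,\bar v)=0$ with $U_{\bar v}(\pm\infty)=h_\pm(\bar v)$ and $U_{\bar v}(0)=h_0(\bar v)$. For $\bar v$ near $v_*$ this profile exists only when $J(\bar v)=0$. To avoid this constraint, we instead take a smooth family $\Phi(\xi;\bar v)$ with the right limits, e.g.\ $\Phi=U+\chi_+(\xi)(h_+(\bar v)-h_+(v_*))+\chi_-(\xi)(h_-(\bar v)-h_-(v_*))$ with cutoffs $\chi_\pm$, and set $q=\Phi+\phi$ with $\phi\in H^2(\bR)$. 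Since $\Phi'$ is exponentially localized, $\cP\Phi$ is well defined, and $\cP\Phi=O(|\xi|^{-1})$ because $\int\Phi''=0$. The equation becomes
\[
\cL\phi=-\big[\Phi''+f(\Phi+\phi,\bar v+\tfrac{\eps}{D}\cP(\Phi+\phi))-f_u(U,v_*)\phi\big]=:N(\phi,\bar v,\eps).
\]
Solvability is handled by Lyapunov--Schmidt with respect to $\ker\cL=\operatorname{span}U'$ from \eqref{eq:cL-character}. We solve $\cL\phi=N-cU'$ in $\{\phi\perp U'\}$, using that $\cL$ is Fredholm and invertible on $(U')^\perp$. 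The normalization $q(0)=U(0)$ is then recovered by translation or by replacing the orthogonality with $\phi(0)=0$. The remaining scalar condition $\int N\,U'=0$ at $\eps=0$ reads $\int f(U,\bar v)U'\,d\xi\approx\pm J(\bar v)$ to leading order. Since $J(v_*)=0$ and $J'(v_*)\ne0$, the implicit function theorem gives $\bar v_\eps$ with $|\bar v_\eps-v_*|=O(\eps)$. Sobolev embedding then yields \eqref{eq:q_eps-convergence}.

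The main obstacle is regularity of $\phi\mapsto\cP\phi$ inside $f$. We have $\cP:H^2\to H^1\subset L^\infty$, so $f(\cdot,\bar v+\frac\eps D\cP q)$ is a smooth map $H^2\to L^2$. The only subtlety is the non-$L^2$ piece $\cP\Phi\sim|\xi|^{-1}$, but it lies in $L^2$ away from the origin and is bounded, so the Nemytskii map is fine. Bootstrapping the equation gives $C^\infty$.

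For the decay estimates, set $\psi=q_\eps-h_\pm(\bar v_\eps)$ near $\pm\infty$ and $g=\frac\eps D\cP q_\eps$. We first prove \eqref{eq:cP-q_eps-expansion}. Write $\cP q=-\frac1\pi\mathrm{p.v.}\int q'(\eta)(\xi-\eta)^{-1}d\eta$ and split $q_\eps'=U'+\phi'$. The $U'$ part decays exponentially and has $\int U'=\Delta h(v_*)\ne0$, which gives $-\frac{\Delta h}{\pi\xi}+O(\xi^{-2})$; differentiating the kernel gives $O(|\xi|^{-j-1})$. For $\phi'$ we need a priori decay of $\phi'$, which comes from a bootstrap. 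Linearizing at $\pm\infty$, $\psi''+f_u\psi=-f_v g+\text{quadratic}$, with $f_u(h_\pm,\bar v)<0$. The resolvent of $\partial^2-\kappa^2$ has an exponential kernel, so if $g=O(\eps|\xi|^{-1})$ then $\psi=O(\eps|\xi|^{-1}+e^{-\gamma|\xi|})$; differentiating the equation gives the same structure for $\psi^{(j)}$ with $g^{(j)}=O(\eps|\xi|^{-j-1})$, whence \eqref{eq:q_eps-derivative-estimate}. To close the loop, I would first show $\phi'\in L^1$ with a weight such as $(1+|\xi|)^{2}\phi'\in L^\infty$. This is done by a fixed-point argument in a weighted space $\sup(1+|\xi|)^{k}|\phi^{(j)}|$, using that $\cP$ maps functions with $\phi'=O(|\xi|^{-2})$ (so $\phi'\in L^1$) to $O(|\xi|^{-1})$, and derivatives correspondingly. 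This is proven by splitting the principal value integral into $|\eta|<|\xi|/2$, $|\eta-\xi|<|\xi|/2$, and the remaining region. I expect this weighted mapping estimate for $\cP$, together with the iteration improving decay one power at a time, to be the technical heart of the proof.
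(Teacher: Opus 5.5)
Your outline is sound, but for the decay estimates you take a genuinely different route from the paper. The existence part is essentially the same. The paper also writes $q=B+r$ with a cutoff ansatz and $r\in H^2(\bR)$. Its bordered implicit-function argument (solving $\cL p=g-\nu\tilde b$ with $\nu=\langle U',g\rangle/J'(v_*)$ and fixing $p(0)$) is your Lyapunov--Schmidt step done in one piece.

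The real difference is how the circularity is broken: $q_\eps^{(j)}$ decays only as fast as $\cP q_\eps^{(j)}$, and conversely.

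\textbf{The paper's route.} It works a posteriori on the $H^2$ solution, via a maximum principle for the nonlocal operator $-\partial_\xi^2+a-b\cP$ (Lemma~\ref{lem:g-estimate}) with barrier $(1+|\xi|)^{-j-1}$. This uses the sign $b=\frac{\eps}{D}f_v\ge0$ from Assumption~\ref{ass:f}(3), but not the smallness of $b$. It controls $q_\eps^{(j+2)}$ by the $S_\delta$ absorption trick. It then needs a second, purely local comparison with $e^{-\gamma_U(|\xi|-R)}+\eps|\xi|^{-j-1}$ to recover the factor $\eps$.

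\textbf{Your route.} The weighted fixed point is perturbative instead. It exploits the prefactor $\eps/D$ and needs no sign condition on $f_v$. If you run the implicit function theorem directly in a weighted $C^k$ space, you obtain $\phi^{(i)}=O(\eps|\xi|^{-1-i})$. That gives \eqref{eq:q_eps-derivative-estimate} and \eqref{eq:cP-q_eps-expansion} in one stroke, so the second comparison step becomes unnecessary.

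\textbf{What your route costs.} You must prove that $\cL^{-1}$ on the complement of $U'$, and $\cP$, act boundedly between these weighted spaces. The near-diagonal part of $\cP g$ costs two derivatives (cf.\ Lemma~\ref{lem:g-integral-estimate}(ii)). So your space must carry $\phi^{(i+1)},\phi^{(i+2)}$ with at least the weight of $\phi^{(i)}$, not merely $\phi'\in L^1$. You must also identify the weighted solution with the $H^2$ one by local uniqueness.

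\textbf{One caution.} Be careful with your phrase about improving decay one power at a time. The range $\rho<|z|<|\xi|/2$ of $\cP g$ decays no faster than $g$ itself, so no iteration in the decay exponent gains anything at a fixed derivative level. It is the contraction coming from $\eps$ small (or the paper's maximum principle) that closes the estimate. The induction is legitimate only over the derivative order $j$.
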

The solution $q_\eps$ will be a key ingredient in the construction of approximate solutions. The decay estimates for $q_\eps$ and $\cP q_\eps$ established here are essential for estimating the errors of the approximate solutions. As shown in Proposition~\ref{prop:u-expansion}, $q_\eps$ provides an approximation of $u$ in the inner region, while, as shown in Proposition~\ref{prop:v-expansion1}, $\cP q_\eps$ provides an approximation of $v$ in the inner region.

The proof consists of two parts. We first construct $(q_\eps,\bar v_\eps)$ by the implicit function theorem and establish its regularity. We then estimate its algebraic decay using two properties of the singular operator $\cP$.

\subsection{Existence of $q_\eps$}
We first establish the existence of $q_\eps$. 
\begin{lem}
    For  sufficiently small $\eps$, there exists a solution $(q_\eps,\bar v_\eps)\in C^\infty(\bR)\times I_f$ of \eqref{eq:auxiliary-problem} satisfying \eqref{eq:q_eps-convergence}. Moreover, $q_\eps-h_\pm(\bar v_\eps)\in H^m(\bR_\pm)$ holds for arbitrary $m\in\bN$. 
\end{lem}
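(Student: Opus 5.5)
We construct $(q_\eps,\bar v_\eps)$ by the implicit function theorem around $(U,v_*)$ at $\eps=0$. To fix the boundary values we write
\[
q=\chi_{\bar v}+\phi,\qquad \chi_{\bar v}(\xi):=U(\xi)+\bigl(h_+(\bar v)-h_+(v_*)\bigr)\theta(\xi)+\bigl(h_-(\bar v)-h_-(v_*)\bigr)(1-\theta(\xi)),
\]
where $\theta$ is a smooth cutoff equal to $0$ for $\xi<-1$ and $1$ for $\xi>1$, and $\phi\in H^2(\bR)$ is the unknown. Since $\chi_{\bar v}''\in L^2$ and decays exponentially, $\cP\chi_{\bar v}\in H^1\subset L^\infty$. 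Hence $\cP q\in H^1(\bR)$ for $\phi\in H^2$, because $\cP:H^2\to H^1$ is bounded.

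We then define
\[
\cF(\phi,\bar v,\eps):=\Bigl(\chi_{\bar v}''+\phi''+f\bigl(\chi_{\bar v}+\phi,\bar v+\tfrac{\eps}{D}\cP(\chi_{\bar v}+\phi)\bigr),\ \phi(0)+\chi_{\bar v}(0)-U(0)\Bigr)
\]
as a map from a neighbourhood of $(0,v_*,0)$ in $H^2(\bR)\times I_f\times\bR$ into $L^2(\bR)\times\bR$. It is $C^1$ because $H^2$ and $H^1$ are algebras embedded in $L^\infty$, and $f$ is smooth. It maps into $L^2$ because $f(h_\pm(\bar v),\bar v)=0$, so the residual at $\pm\infty$ is controlled by $\phi$, by the exponentially decaying part of $\chi_{\bar v}-h_\pm(\bar v)$, and by $\cP q\in L^2$. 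At $\eps=0$ we have $\cF(0,v_*,0)=0$.

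The linearization in $(\phi,\bar v)$ at this point is
\[
(\psi,\mu)\mapsto\Bigl(\cL\psi+\mu\,g,\ \psi(0)+\mu\,\partial_v\chi(0)\Bigr),\qquad g:=\partial_v\chi''+f_u(U,v_*)\,\partial_v\chi+f_v(U,v_*).
\]
By \eqref{eq:fu-fv-relation}, $g$ decays exponentially, so $g\in L^2$. The key step is invertibility of this map.

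\begin{itemize}
\item $\cL$ is Fredholm of index zero on $H^2\to L^2$, since $f_u(h_\pm,v_*)<0$ by Assumption~\ref{ass:f}(3). By \eqref{eq:cL-character}, $\ker\cL=\operatorname{span}U'$ and the range is $(U')^\perp$.
\item Hence $\cL\psi+\mu g=r$ is solvable exactly when $\mu\int gU'=\int rU'$.
\item Using $\partial_v\chi=h_\pm'(v_*)$ near $\pm\infty$ and integrating by parts, $\int g\,U'\,d\xi=\int_\bR f_v(U,v_*)U'\,d\xi$ plus boundary contributions. This should equal $J'(v_*)$ up to sign; the equality comes from differentiating $J(v)=\int_{h_-}^{h_+}f\,du$ and using $f(h_\pm,v)=0$.
\item By Assumption~\ref{ass:f}(2), $J'(v_*)\ne0$, so $\mu$ is determined uniquely.
\item $\psi$ is then unique up to adding $cU'$, and the second component fixes $c$ because $U'(0)>0$.
\end{itemize}

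Thus the linearization is an isomorphism. The implicit function theorem gives $(\phi_\eps,\bar v_\eps)$ with $\|\phi_\eps\|_{H^2}+|\bar v_\eps-v_*|=O(\eps)$. Since $H^2\subset W^{1,\infty}$, this yields \eqref{eq:q_eps-convergence}.

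Regularity follows by bootstrapping. Suppose $q-h_\pm(\bar v)\in H^m(\bR_\pm)$, meaning $\phi\in H^m$. Then $\cP q\in H^{m-1}$, so $f(q,\bar v+\frac{\eps}{D}\cP q)\in H^{m-1}$ because $f$ is smooth and $H^{m-1}$ is an algebra for $m\ge2$. The equation $\phi''=-\chi''-f(\cdots)$ then gives $\phi\in H^{m+1}$. Induction gives every $m$, and Sobolev embedding gives $C^\infty$.

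The main obstacle is the computation identifying $\int gU'$ with a nonzero multiple of $J'(v_*)$: the cutoff terms $\partial_v\chi''$ and the $h_\pm'$ contributions must be shown to cancel correctly. I would first try this with the exact relation \eqref{eq:fu-fv-relation} and by differentiating the energy identity $\tfrac12U'^2+\int_{h_-}^{U}f\,du=0$ in $v$. A less serious point is checking that $\cF$ is $C^1$, which relies on $\cP$ being linear and bounded.
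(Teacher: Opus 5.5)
Your proposal is correct and follows the paper's proof essentially step for step: the same cutoff ansatz $q=\chi_{\bar v}+\phi$ with $\phi\in H^2(\bR)$ (the paper's $B+r$), the same augmented map with the pinning condition $q(0)=U(0)$, the same inversion of the linearization using $\ker\cL=\operatorname{span}U'$, $U'(0)>0$ and $J'(v_*)\neq0$, and the same bootstrap for regularity. The step you flagged as the main obstacle is in fact immediate, exactly as in the paper: since $g=\cL(\partial_v\chi)+f_v(U,v_*)$, self-adjointness gives $\int g\,U'\,d\xi=\int \partial_v\chi\,\cL U'\,d\xi+\int f_v(U,v_*)U'\,d\xi=\int_{h_-(v_*)}^{h_+(v_*)}f_v(u,v_*)\,du=J'(v_*)$ exactly. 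The boundary terms vanish because $U',U''$ decay exponentially while $\partial_v\chi$ and its derivative are bounded, and $J'(v)=\int_{h_-(v)}^{h_+(v)}f_v\,du$ because $f(h_\pm(v),v)=0$, so there is no sign ambiguity and no cutoff contribution to track.
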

\begin{proof}
    Throughout this proof, $C$ denotes a generic constant independent of $\eps$. 
    Let $\chi_*$ be a smooth cutoff function such that $\chi_*(\xi)=0$ for $\xi\le-1$ and $\chi_*(\xi)=1$ for $\xi\ge1$. Set
    \begin{align*}
        B(\xi,\bar v):=U(\xi)+(1-\chi_*(\xi))(h_-(\bar v)-h_-(v_*))+\chi_*(\xi)(h_+(\bar v)-h_+(v_*))
    \end{align*}
    Then $B$ is smooth and satisfies $B(\pm\infty,\bar v)=h_\pm(\bar v)$. 
    We seek a solution of the form $q=B+r$ with $r\in H^2(\bR)$. The conditions $q(\pm\infty)=h_\pm(\bar v)$ are then automatically satisfied. 
    To apply the implicit function theorem, define the smooth maps $\cF:H^2(\bR)\times I_f\times\bR\to L^2(\bR)$ and $\cN:H^2(\bR)\times I_f\times\bR\to\bR$ by
    \begin{align*}
    \cF(r,\bar v,\eps)&:=(B(\cdot,\bar v)+r)''+f\big(B(\cdot,\bar v)+r, \bar v + \frac{\eps}{D}\cP (B(\cdot,\bar v)+r)\big)\\
    \cN(r,\bar v,\eps)&:=B(0,\bar v)+r(0)-U(0)
    \end{align*}
    
    Then \eqref{eq:auxiliary-problem} is equivalent to
    \begin{align}\label{eq:rewrited-auxiliary}
        \left\{
        \begin{aligned}
        \cF(r,\bar v,\eps)=0\\
        \cN(r,\bar v,\eps)=0
        \end{aligned}\right.
    \end{align}
    
    Note that $\cF(0,v_*,0)=U''+f(U,v_*)=0$ and $\cN(0,v_*,0)=B(0,v_*)-U(0)=0$. 

    We show that $D_{(r,\bar v)}(\cF,\cN)(0,v_*,0):H^2(\bR)\times\bR\to L^2(\bR)\times\bR$ has a bounded inverse. 
    Set $b(\xi):=\partial_{\bar v}B(\xi,v_*)=(1-\chi_*(\xi))h_-'(v_*)+\chi_*(\xi)h_+'(v_*)$. 
    For $(p,\nu)\in H^2(\bR)\times\bR$, we have
    \begin{align*}
        D_{(r,\bar v)}\cF(0,v_*,0)[p,\nu]&=\cL p + \nu \tilde b\\
        D_{(r,\bar v)}\cN(0,v_*,0)[p,\nu]&=b(0)\nu+p(0)
    \end{align*}
    where $\tilde b:=\cL b+f_v(U,v_*)$. For $\pm\xi\ge1$, we have $\tilde b(\xi)=f_u(U,v_*)h_\pm'(v_*)+f_v(U,v_*)$. Thus, by \eqref{eq:fu-fv-relation} and the fact that $U$ is exponentially decaying to $h_\pm(v_*)$ as $\xi\to\pm\infty$, $\tilde b\in L^2(\bR)$ holds.
    We claim that $\langle U',\tilde b\rangle=J'(v_*)\neq0$. 
    Indeed, the self-adjointness of $\cL$ and \eqref{eq:cL-character} give
    \begin{align*}
        \langle U', \tilde b\rangle
        =\int_\bR(U'\cL b+U'f_v(U,v_*))d\xi=\int_\bR b\cL U'd\xi +\int_{h_-(v_*)}^{h_+(v_*)}f_v(u,v_*)du=J'(v_*).
    \end{align*}
    We consider the equation
    \begin{align}\label{eq:inv-eq}
    \cL p =g-\nu \tilde b,\quad p(0)=\tilde\nu-b(0)\nu
    \end{align}
    for $(\tilde\nu,g)\in \bR\times L^2(\bR)$. 
    Multiplying the first equation in \eqref{eq:inv-eq} by $U'$ and integrating gives $\nu=\frac{\langle U',g\rangle}{J'(v_*)}$. The Cauchy–Schwarz inequality therefore yields $|\nu|\le C\|g\|_{L^2(\bR)}$. Since $\langle U',g-\nu\tilde b\rangle=0$, \eqref{eq:cL-character} implies that the general solution of the first equation in \eqref{eq:inv-eq} is $p=p_0+c_pU'$, where $p_0=\cL^{-1}(g-\nu\tilde b)\in H^2(\bR)$. The boundedness of $\cL^{-1}$ gives $\|p_0\|_{H^2(\bR)}\le C\|g\|_{L^2(\bR)}$. The second equation in \eqref{eq:inv-eq} determines $c_p=\frac{1}{U'(0)}(\tilde\nu-b(0)\nu-p_0(0))$. Consequently, $\|p\|_{H^2(\bR)}\le C|c_p|+C\|g-\nu\tilde b\|_{L^2(\bR)}\le C(|\tilde\nu|+\|g\|_{L^2(\bR)})$. Thus, $D_{(r,\bar v)}(\cF,\cN)(0,v_*,0)$ has a bounded inverse. The implicit function theorem now yields, for  sufficiently small $\eps$, a solution $(r_\eps,\bar v_\eps)\in H^2(\bR)\times I_f$ of \eqref{eq:rewrited-auxiliary} satisfying $\|r_\eps\|_{H^2(\bR)}+|\bar v_\eps-v_*|=O(\eps)$. The Sobolev embedding $H^2(\bR)\hookrightarrow W^{1,\infty}(\bR)$ then gives \eqref{eq:q_eps-convergence}. 
    
    Finally, the boundedness of $\cP:H^m(\bR)\to H^{m-1}(\bR)$ for every $m\in\bN$, together with a bootstrap argument and Sobolev embedding, yields the regularity of $q_\eps$. 
\end{proof}

\subsection{Algebraic decay estimates}
The preceding lemma establishes the existence of $q_\eps$. To estimate its decay rate inductively, we prepare the following two lemmas. 
\begin{lem}\label{lem:g-integral-estimate}
    Let $q\in C^\infty(\bR)\cap L^\infty(\bR)$, let $j\ge1$ be an integer, and set $g:=q^{(j)}$. Assume that $q^{(l)}(\xi)=O(|\xi|^{-l-1})$ for $1\le l<j$ as $|\xi|\to\infty$. For $j=1$, this assumption is vacuous. 
    Then there exists a constant $C$ such that the following statements hold. 
    \begin{enumerate}[label=(\roman*)]
        \item Set $r:=|\xi|/2$. Then, for large $|\xi|$, 
        \begin{align}\label{eq:g-int1}
            \bigg|\int_{|z|\ge r}\frac{g(\xi+z)}{z^2}dz\bigg|\le C(1+|\xi|)^{-j-1}
        \end{align}
        holds. 
        \item For $\xi\in\bR$, $r>0$, and $h\in C^2([\xi-r,\xi+r])$, 
        \begin{align}\label{eq:g-int2}
            \bigg|\mathrm{p.v.}\int_{|z|<r}\frac{h(\xi+z)-h(\xi)}{z^2}dz\bigg|
            \le r\sup_{|\eta-\xi|\le r}|h''(\eta)|
        \end{align}
        holds. 
    \item Assume in addition that $g(\xi)=O(|\xi|^{-j-1})$. Then, for $0<\rho<1$ and large $|\xi|$, 
        \begin{align*}
            |\cP g(\xi)|\le \frac\rho\pi\sup_{|\eta-\xi|\le\rho}|g''(\eta)|+\frac{C}{\rho}(1+|\xi|)^{-j-1}
        \end{align*}
    where $C$ is independent of $\rho$. 
    \end{enumerate}
\end{lem}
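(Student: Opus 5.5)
We prove the three parts separately. (ii) is elementary, (i) is the real estimate, and (iii) follows by combining the two with the choice $h=g$ and radius $\rho$.

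\emph{Part (ii).} Symmetrize the principal value:
\[
\mathrm{p.v.}\int_{|z|<r}\frac{h(\xi+z)-h(\xi)}{z^2}\,dz=\int_0^r\frac{h(\xi+z)+h(\xi-z)-2h(\xi)}{z^2}\,dz .
\]
By Taylor's theorem the numerator is at most $z^2\sup_{|\eta-\xi|\le r}|h''(\eta)|$ in absolute value. Integrating over $(0,r)$ gives the bound $r\sup|h''|$.

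\emph{Part (i).} Split $\{|z|\ge r\}$ into the region $|z|\ge 2|\xi|$ and the region $r\le|z|<2|\xi|$.

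For $j=1$ we have $g=q'$, so we integrate by parts against $z^{-2}$:
\[
\int_{|z|\ge r}\frac{q'(\xi+z)}{z^2}\,dz=\Bigl[\frac{q(\xi+z)}{z^2}\Bigr]_{\text{boundary}}+2\int_{|z|\ge r}\frac{q(\xi+z)}{z^3}\,dz .
\]
Boundedness of $q$ makes the boundary terms $O(r^{-2})$ and the remaining integral $O(r^{-2})$. This is already $O(|\xi|^{-2})$.

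For $j\ge2$ we write $g=(q^{(j-1)})'$ and integrate by parts once more. The boundary terms are $q^{(j-1)}(\xi\pm r)\,r^{-2}=O(|\xi|^{-j}\,|\xi|^{-2})$, using the hypothesis at $|\xi\pm r|\ge|\xi|/2$. This leaves the term
\[
2\int_{|z|\ge r}\frac{q^{(j-1)}(\xi+z)}{z^3}\,dz .
\]
We iterate, moving derivatives from $q^{(l)}$ onto the kernel $z^{-(j-l+1)}$. Each boundary term is controlled by $|q^{(l)}(\xi\pm r)|\,r^{-(j-l+1)}=O(|\xi|^{-l-1-j+l-1})=O(|\xi|^{-j-2})$. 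The final term is $\int_{|z|\ge r}q\,z^{-j-2}\,dz=O(r^{-j-1})$ by boundedness of $q$. Hence the total is $O(|\xi|^{-j-1})$.

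The subtle point is that the hypothesis is only asymptotic. Since $|\xi\pm r|\ge|\xi|/2$ is large, we evaluate $q^{(l)}$ only at large arguments in the boundary terms, so this causes no trouble. The interior integrals are never estimated pointwise except for $q$ itself, which is bounded.

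\emph{Part (iii).} Write
\[
\pi\,\cP g(\xi)=\mathrm{p.v.}\int_{|z|<\rho}+\int_{\rho\le|z|<r}+\int_{|z|\ge r}\ \ \text{of}\ \ \frac{g(\xi+z)-g(\xi)}{z^2}\,dz .
\]
\begin{itemize}
\item The first piece is bounded by $\rho\sup_{|\eta-\xi|\le\rho}|g''(\eta)|$ by (ii). After dividing by $\pi$ this gives the term $\frac{\rho}{\pi}\sup|g''|$.
\item In the middle piece, $|\xi+z|\ge|\xi|/2$, so $|g(\xi+z)|+|g(\xi)|\le C|\xi|^{-j-1}$. Integrating $z^{-2}$ over $|z|\ge\rho$ gives at most $C\rho^{-1}|\xi|^{-j-1}$.
\item The last piece splits into $\int_{|z|\ge r}g(\xi+z)z^{-2}\,dz$ and $g(\xi)\int_{|z|\ge r}z^{-2}\,dz$. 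The first is handled by (i). The second is $O(|\xi|^{-j-1}r^{-1})$. Both are $O(|\xi|^{-j-1})\le C\rho^{-1}|\xi|^{-j-1}$ since $\rho<1$.
\end{itemize}

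All constants are independent of $\rho$. The main obstacle is the iterated integration by parts in (i), where one must track the boundary terms while using the decay of $q^{(l)}$ only at points of modulus at least $|\xi|/2$.
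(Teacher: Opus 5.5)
Your proof is correct and follows the paper's argument essentially step by step. You use symmetrization plus Taylor for (ii), and $j$-fold integration by parts in (i) with boundary terms evaluated at $|\xi\pm r|\ge|\xi|/2$ and the last integral bounded via $\|q\|_{L^\infty}$. For (iii) you use the same three-region decomposition, including the $-2g(\xi)/r$ term. Two small bookkeeping points: the final boundary term $q(\xi\pm r)\,r^{-j-1}$ is only $O(r^{-j-1})$, coming from boundedness of $q$ rather than $O(|\xi|^{-j-2})$, which is still sufficient and is exactly how the paper treats it; and the split at $|z|=2|\xi|$ you announce in (i) is never used and can be dropped.
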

\begin{proof}
    \noindent\textit{Proof of (i). }
    Integrating by parts $j$ times gives
    \begin{align*}
        &\bigg|\int_{|z|\ge r}\frac{q^{(j)}(\xi+z)}{z^2}dz\bigg|\\
        &\le 2\sum_{l=0}^{j-2}(l+1)!r^{-l-2}\big|(-1)^{l}q^{(j-l-1)}(\xi+r)-q^{(j-l-1)}(\xi-r)\big|\\
        &\quad +2j!r^{-j-1}\big|(-1)^{j-1}q(\xi+r)-q(\xi-r)\big|+\|q\|_{L^\infty(\bR)} \int_{|z|\ge r}(j+1)!|z|^{-j-2}dz\\
        &\le C \sum_{l=0}^{j-2}r^{-l-2}\cdot r^{l-j}+Cr^{-j-1}\le C(1+|\xi|)^{-j-1},
    \end{align*}
    which proves \eqref{eq:g-int1}. 
    
    \noindent\textit{Proof of (ii). }
    \begin{align*}
        \bigg|\mathrm{p.v.}\int_{|z|<r}\frac{h(\xi+z)-h(\xi)}{z^2}dz\bigg|
        =\bigg|\int_0^r\frac{h(\xi+z)+h(\xi-z)-2h(\xi)}{z^2}dz\bigg|
        \le r\sup_{|\eta-\xi|\le r}|h''(\eta)|.
    \end{align*}
    This proves \eqref{eq:g-int2}. 
    
    \noindent\textit{Proof of (iii). }
    Set $r=|\xi|/2$ and decompose the integral as follows: 
    \begin{align*}
        \pi\cP g(\xi)&=\mathrm{p.v.}\int_{|z|<\rho}\frac{g(\xi+z)-g(\xi)}{z^2}dz
        +\int_{\rho<|z|\le r}\frac{g(\xi+z)-g(\xi)}{z^2}dz
        +\int_{|z|\ge r}\frac{g(\xi+z)}{z^2}dz-\frac{2g(\xi)}{r}
    \end{align*}
    
    By (ii), the first term is bounded by $\rho\sup_{|\eta-\xi|\le\rho}|g''(\eta)|$. 
    For the second term, the assumed decay gives
    \begin{align*}
        |(\text{2nd term})|
        \le C \sup_{\rho<|z|\le r}\big(|g(\xi+z)|+|g(\xi)|\big)\int_{|z|\ge\rho}z^{-2}dz
        \le C\rho^{-1}(1+|\xi|)^{-j-1}
    \end{align*}
    
    By (i), the third term is bounded by $C(1+|\xi|)^{-j-1}$. 
    The fourth term is bounded by $C(1+|\xi|)^{-j-2}$ by assumption. 
\end{proof}

\begin{lem}\label{lem:g-estimate}
    Let $q\in C^\infty(\bR)\cap L^\infty(\bR)$, let $j\ge1$ be an integer, and set $g:=q^{(j)}$. Assume that $q^{(l)}(\xi)=O(|\xi|^{-l-1})$ for $1\le l<j$ as $|\xi|\to\infty$. This assumption is vacuous for $j=1$. Assume also that $g(\pm\infty)=0,\ g''\in L^\infty(\bR)$. 
    Suppose that there exist positive constants $a_0,C,R_0$ such that $a,b,F\in C(\bR)$ satisfy
    \begin{align*}
        a_0\le a(\xi)\le C,\ 0\le b(\xi)\le C\  (|\xi|>R_0),\quad |F(\xi)|\le C(1+|\xi|)^{-j-1}\  (\xi\in\bR).
    \end{align*}
    
    If $g$ satisfies
    \begin{align}\label{eq:g-eq}
        -g''+ag-b\cP g=F\quad \text{on } \bR,
    \end{align}
    then
    \begin{align}\label{eq:g-ineq}
        |g(\xi)|+|g''(\xi)|+|\cP g(\xi)|\le C(1+|\xi|)^{-j-1}
    \end{align}
    holds for large $|\xi|$. 
\end{lem}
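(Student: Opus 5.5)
We prove \eqref{eq:g-ineq} by a comparison (maximum-principle) argument for the nonlocal operator $-\partial_\xi^2+a-b\cP$, with Lemma~\ref{lem:g-integral-estimate} used to control $\cP g$ and the bound on $g''$ read off from the equation afterwards. The guiding observation is that $-\cP=(-\partial_\xi^2)^{1/2}$ is a positive operator. Hence at a positive maximum of $g$ we have $-g''\ge0$ and $\cP g\le0$, so both $-g''$ and $-b\cP g$ are nonnegative there, and the equation forces $a g\le F$. Because $\cP$ is nonlocal, this argument cannot be localized to $|\xi|>R_0$ directly. Instead we use a weighted quantity and absorb the tail contributions through Lemma~\ref{lem:g-integral-estimate}.

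\textbf{Step 1: the case $b\equiv0$.} Set $\phi(\xi):=(1+\xi^2)^{-(j+1)/2}$ and consider $w:=g-K\phi$ on $|\xi|>R$. Since $\phi''=O(|\xi|^{-2})\phi$, we get $-\phi''+a\phi\ge(a_0/2)\phi$ for $R$ large. Choose $K$ large enough that $K a_0/2\ge C$ and that $w\le0$ on $|\xi|=R$; the latter uses boundedness of $g$. Then $-w''+aw\le F-CK\phi/C\cdot$(appropriately) $\le 0$, and because $w\to0$ at infinity, the classical maximum principle gives $w\le0$. The same argument applied to $-g$ gives the lower bound.

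\textbf{Step 2: including $b\cP g$.} Apply Lemma~\ref{lem:g-integral-estimate}(iii) in a bootstrap. That part requires the decay of $g$ that we are trying to prove, so we do not use it directly. Instead we use the splitting from its proof with $r=|\xi|/2$:
\[
\pi\cP g(\xi)=\mathrm{p.v.}\int_{|z|<r}\frac{g(\xi+z)-g(\xi)}{z^2}dz+\int_{|z|\ge r}\frac{g(\xi+z)}{z^2}dz-\frac{2g(\xi)}{r}.
\]
By Lemma~\ref{lem:g-integral-estimate}(i), the far part is $O((1+|\xi|)^{-j-1})$, and this bound only needs the hypotheses on $q^{(l)}$ with $l<j$. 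The term $-2g/r$ has the favourable sign at a positive maximum, and it is in any case small compared with $a g$. The near part is the only genuinely nonlocal contribution.

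We handle the near part with a comparison on the weighted quantity $W:=g/\phi$. Suppose $\sup_{|\xi|>R}W>K$, with $K$ large. Since $g\to0$ we cannot immediately say $W$ attains its supremum, but $W$ is bounded on any bounded set, so we argue as follows. Fix $R$ large, take $M:=\sup_{|\xi|\ge R}|g|\phi^{-1}$, and suppose first that $M<\infty$; truncating the weight (replacing $\phi$ by $\phi+\delta$, which we remove at the end) guarantees this. Let $\xi_0$ be a point where $W$ attains its supremum. At $\xi_0$ we have $g(\xi_0+z)\le M\phi(\xi_0+z)$, so the near integral is bounded by
\[
M\int_{|z|<r}\frac{\phi(\xi_0+z)-\phi(\xi_0)}{z^2}dz\le M\,r\sup|\phi''|\le CM|\xi_0|^{-1}\phi(\xi_0)
\]
by Lemma~\ref{lem:g-integral-estimate}(ii). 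The same second-derivative comparison bounds $-g''(\xi_0)$ from below by $-CM|\xi_0|^{-2}\phi(\xi_0)$. Substituting into the equation gives
\[
a_0M\phi(\xi_0)\le C\phi(\xi_0)+CM|\xi_0|^{-1}\phi(\xi_0).
\]
If $R$ is large, the last term is absorbed, so $M\le C$ unless the supremum is attained on $|\xi|=R$, where $g$ is bounded anyway. This yields $|g|\le C\phi$.

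\textbf{Step 3: the bounds on $\cP g$ and $g''$.} Now apply Lemma~\ref{lem:g-integral-estimate}(iii) with fixed $\rho$, using $g''\in L^\infty$, to get $|\cP g|\le C$ globally. To obtain decay we need local control of $g''$: write $g''=ag-b\cP g-F$ and bootstrap. A direct estimate via the splitting gives $|\cP g(\xi)|\le\rho\sup|g''|+C\rho^{-1}|\xi|^{-j-1}$, and the equation then gives $|g''|\le C|\xi|^{-j-1}+C|\cP g|$. Combining these and choosing $\rho$ small so that $C\rho/\pi<1/2$ lets us absorb the $g''$ term. This absorption needs suprema over neighbourhoods, which we control with a weighted sup over $|\eta|\ge|\xi|/2$.

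The main obstacle is the maximum-principle step for the nonlocal term. Making the weighted supremum argument rigorous, in particular its attainment, the truncation, and the interplay between the region $|\xi|<R_0$ where $b$ has no sign and the nonlocal integral, is the delicate point. The remaining steps are direct applications of Lemma~\ref{lem:g-integral-estimate}.
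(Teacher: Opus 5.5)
Your proposal is correct and follows the paper's proof. It uses the same splitting of $\pi\cP g$ at $r=|\xi|/2$ into a near part, a far part bounded by Lemma~\ref{lem:g-integral-estimate}~(i), and the favourably signed $-2g/r$ term; the same maximum-principle (touching) comparison with the barrier $(1+|\xi|)^{-j-1}$, with the near part controlled by Lemma~\ref{lem:g-integral-estimate}~(ii); and then absorption for $g''$ via Lemma~\ref{lem:g-integral-estimate}~(iii), applied once more for $\cP g$. The one point to state explicitly is that the weighted supremum you absorb in Step~3 must be finite a priori; the paper ensures this, just as you did in Step~2, by using $S_\delta=\sup_\xi |g''(\xi)|/((1+|\xi|)^{-j-1}+\delta)$, which is finite since $g''\in L^\infty(\bR)$ and satisfies $S_\delta\le C\rho S_\delta+C\rho^{-1}$ uniformly in $\delta$.
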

\begin{proof}
    Set $r=|\xi|/2$. 
    We rewrite \eqref{eq:g-eq} as
    \begin{align}\label{eq:g-eq-decomposed}
    \begin{aligned}
        \cA g(\xi)&:=-g''(\xi)+\bigg(a(\xi)+\frac{2b(\xi)}{\pi r}\bigg)g(\xi) 
        +\frac{b(\xi)}{\pi}\mathrm{p.v.}\int_{|z|<r}\frac{g(\xi)-g(\xi+z)}{z^2}dz\\
        &=F(\xi)+\frac{b(\xi)}{\pi}\int_{|z|\ge r}\frac{g(\xi+z)}{z^2}dz.
    \end{aligned}
    \end{align}
    By Lemma~\ref{lem:g-integral-estimate}~(i) and the assumption on $F$, the right-hand side is bounded by $C(1+|\xi|)^{-j-1}$. 
    The operator $\cA$ satisfies a maximum principle: if, for some positive constant $R\ge R_0$, 
    \begin{align*}
    p\in C(\bR)\cap C^2(\bR\setminus[-R,R]),\quad p(\xi)\le0\ (|\xi|\le R),\quad p(\pm\infty)=0,\quad \cA p(\xi)\le 0\ (|\xi|>R),
    \end{align*}
    then $p\le0$. 
    Indeed, supposing that there exists $\xi_0\in\bR\setminus[-R,R]$ such that $\max_{\xi\in\bR}p(\xi)=p(\xi_0)>0$, we then have $\cA p(\xi_0)\ge a_0p(\xi_0)>0$, thus yielding a contradiction. 
    Define $g_0(\xi):=(1+|\xi|)^{-j-1}$. By Lemma~\ref{lem:g-integral-estimate}~(ii), for sufficiently large $R$ and $|\xi|>R$, 
    \begin{align}\label{eq:cA-g0}
        \cA g_0(\xi)\ge-C(1+|\xi|)^{-j-3}+a_0 g_0(\xi)-Cr(1+|\xi|)^{-j-3}\ge\frac{a_0}{2}g_0(\xi).
    \end{align}
    For a positive constant $B$, set $p_\pm:=\pm g-Bg_0$. Taking $B$ sufficiently large, we obtain, for $|\xi|>R$, 
    \begin{align*}
        \cA p_\pm(\xi)\le C(1+|\xi|)^{-j-1}-B\frac{a_0}{2}g_0\le 0
    \end{align*}
    using \eqref{eq:g-eq-decomposed} and \eqref{eq:cA-g0}. 
    Taking $B$ sufficiently large ensures that $p_\pm\le0$ for $|\xi|\le R$. Moreover, $p_\pm(\pm\infty)=0$. The maximum principle therefore yields $|g|\le Bg_0$. 

    By \eqref{eq:g-eq} and Lemma~\ref{lem:g-integral-estimate}~(iii), for $\xi\in\bR$ and $0<\rho<1$, 
    \begin{align}\label{eq:g''-ineq}
        |g''(\xi)|\le C\rho\sup_{|\eta-\xi|\le\rho}|g''(\eta)|+C\rho^{-1}g_0.
    \end{align}
    For $\delta>0$, define
    \begin{align*}
        S_\delta:=\sup_{\xi\in\bR}\frac{|g''(\xi)|}{g_0(\xi)+\delta}
    \end{align*}
    This quantity is finite for every $\delta>0$. 
    Dividing both sides of \eqref{eq:g''-ineq} by $g_0+\delta$ and taking the supremum over $\xi\in\bR$ gives
    \begin{align*}
        S_\delta\le C\rho \sup_{\eta,\xi\in\bR,|\eta-\xi|\le\rho}\frac{g_0(\eta)+\delta}{g_0(\xi)+\delta}\frac{|g''(\eta)|}{g_0(\eta)+\delta} + C\rho^{-1}
        \le C\rho S_\delta + C\rho^{-1}
    \end{align*}
    Here, we used that $\frac{g_0(\eta)+\delta}{g_0(\xi)+\delta}\le \frac{2^{j+1}g_0(\xi)+\delta}{g_0(\xi)+\delta}\le 2^{j+1}$ for $|\xi-\eta|<1$.
    Since $C$ is independent of $\rho$, we may choose $\rho$ sufficiently small to obtain
    $S_\delta\le 2C\rho^{-1}$.
    Letting $\delta\to0$ gives $|g''|\le Cg_0$. 
    Lemma~\ref{lem:g-integral-estimate}~(iii) then yields $|\cP g|\le Cg_0$, which proves \eqref{eq:g-ineq}. 
\end{proof}

\begin{proof}[Proof of Proposition~\ref{prop:q_eps}] 
    \textbf{Step 1. Coarse algebraic decay.}
    Set
    \begin{align*}
        a=-f_u\big(q_\eps,\bar v_\eps+\frac{\eps}{D}\cP q_\eps\big),\quad
        b=\frac{\eps}{D}f_v\big(q_\eps,\bar v_\eps+\frac{\eps}{D}\cP q_\eps\big).
    \end{align*}
    These functions satisfy the assumptions of Lemma~\ref{lem:g-estimate} for sufficiently small $\eps$.
    Differentiating \eqref{eq:auxiliary-problem} once gives
    \begin{align}\label{eq:once-differentiated-auxirialy}
        -(q_\eps')''+a q_\eps' -b \cP q_\eps'=0.
    \end{align}
    Applying Lemma~\ref{lem:g-estimate} with $j=1$, we obtain
    \begin{align*}
        q_\eps'=O(|\xi|^{-2}),\quad \cP q_\eps'=O(|\xi|^{-2}).
    \end{align*}
    Integration then gives
    \begin{align*}
        q_\eps-h_\pm(\bar v_\eps)=O(|\xi|^{-1}).
    \end{align*}
    
    Let $j\ge2$ and suppose that
    \begin{align*}
        q_\eps^{(k)}=O(|\xi|^{-k-1}),\quad \cP (q_\eps^{(k)})=O(|\xi|^{-k-1})\quad(1\le k<j)
    \end{align*}
    has already been established. 
    Differentiating \eqref{eq:once-differentiated-auxirialy} $j-1$ times yields
    \begin{align*}
        -(q_\eps^{(j)})''+a q_\eps^{(j)} -b \cP q_\eps^{(j)}=F_j,\quad |F_j|=O(|\xi|^{-j-2}).
    \end{align*}
    
    Applying Lemma~\ref{lem:g-estimate} inductively, we obtain, for every $j\ge2$ as $\xi\to\pm\infty$, 
    \begin{align*}
        \partial_\xi^jq_\eps(\xi)=O(|\xi|^{-j-1}),\quad (\cP q_\eps)^{(j)}=O(|\xi|^{-j-1}),
    \end{align*}
    which proves \eqref{eq:cP-q_eps-expansion}. 

    \textbf{Step 2. Refined decay estimate.}
    We now refine these estimates to establish \eqref{eq:q_eps-derivative-estimate}. 
    Set $g_j:=\partial_\xi^j\big(q_\eps-h_\pm(\bar v_\eps)\big)$. 
    We first consider $j=0$. 
    By the mean value theorem, \eqref{eq:auxiliary-problem} can be rewritten as
    \begin{align*}
        -g_0''+a_\eps^0 g_0&=F_\eps^0
    \end{align*}
    where
    \begin{align*}
        a_\eps^0&:=-\int_0^1f_u\bigg(h_\pm(\bar v_\eps)+t(q_\eps-h_\pm(\bar v_\eps)),\bar v_\eps+\frac{\eps}{D}\cP q_\eps\bigg)dt,\\
        F_\eps^0&:=f\bigg(h_\pm(\bar v_\eps),\bar v_\eps+\frac{\eps}{D}\cP q_\eps\bigg)-f\big(h_\pm(\bar v_\eps),\bar v_\eps\big)=O(\eps|\xi|^{-1})
    \end{align*}
    
    For sufficiently large $R$, there exists a constant $a_0>0$ independent of $\eps$ such that $a_\eps^0\ge a_0$ for $|\xi|\ge R$. 
    Define $\phi_j(\xi):=e^{-\gamma_U(|\xi|-R)}+\eps|\xi|^{-j-1}$ for $|\xi|\ge R$. 
    If necessary, decrease $\gamma_U$ so that $\gamma_U^2\le a_0/2$. 
    A direct calculation then gives $-\phi_0''+a_\eps^0\phi_0\ge\frac{a_0}{2}\phi_0$ for sufficiently large $R$. Hence, for sufficiently large $C$, 
    \begin{align*}
        (-\partial_\xi^2+a_\eps^0)(\pm g_0-C\phi_0)\le0\quad (|\xi|\ge R),\quad \pm g_0(\pm R)-C\phi_0(\pm R)\le0.
    \end{align*}
    The maximum principle yields $|g_0|\le C\phi_0$, proving \eqref{eq:q_eps-derivative-estimate} for $j=0$. 

    We prove \eqref{eq:q_eps-derivative-estimate} for $j\ge1$ by induction. 
    Suppose that $|g_k|\le C\phi_k$ has been established for $0\le k<j$. 
    Differentiating \eqref{eq:auxiliary-problem} $j$ times gives
    \begin{align*}
        -g_j''+a_\eps g_j&=F_\eps^j,
    \end{align*}
    where
    \begin{align*}
        a_\eps&:=-f_u\bigg(q_\eps,\bar v_\eps+\frac{\eps}{D}\cP q_\eps\bigg),\qquad F_\eps^j:=\partial_\xi^jf\bigg(q_\eps,\bar v_\eps+\frac{\eps}{D}\cP q_\eps\bigg)+a_\eps q_\eps^{(j)}.
    \end{align*}
    The induction hypothesis and the chain rule imply that $|F_\eps^j|\le C\phi_j$. 
    Arguing as in the case $j=0$, we obtain $|g_j|\le C\phi_j$, which proves \eqref{eq:q_eps-derivative-estimate}. 
\end{proof}

\section{Construction of approximate solutions}\label{sec:construction}
We first rewrite \eqref{eq:steady} as a scalar problem. 
Writing the spatial average of $v$ as $\bar v_\eps+\eps^2\mu$, we can solve \eqref{eq:steady} for $v$ to obtain
\begin{align}\label{eq:v-solved}
    v(x)=\bar v_\eps+\eps^2\mu+\frac{\eps^2}{D}\int_\Gamma G(x,\gamma(\sigma))u''(\sigma)d\sigma
\end{align}

Define $S_{\eps,\mu}[u](s)$ by evaluating the right-hand side at $x=\gamma(s)$. Using the mass conservation law \eqref{eq:mass-conservation}, we can then rewrite \eqref{eq:steady} as the following system for $u$ and $\mu$: 
\begin{align}\label{eq:F_eps-def}
\begin{cases}
    F_{\eps,\mu}(u):=\eps^2u''+f(u,S_{\eps,\mu}[u])=0\\
    |\Omega|(\bar v_\eps+\eps^2\mu)+\int_\Gamma u(\sigma)d\sigma-M=0
\end{cases}
\end{align}

By shifting the arc-length parametrization, we may assume without loss of generality that $s_{0,*}=L/2$ in the assumptions of Theorem~\ref{thm:main}. 
Fix $0<\delta_*<\min\big(w_*,\frac12(L/2-w_*)\big)$ and $0<\delta<\frac12\min(w_*-\delta_*,L/2-w_*-2\delta_*)$. 
If $|w-w_*|<\delta_*$ and $|s_0-s_{0,*}|<\delta_*$, then $s_1=s_0-w\ge(L/2-\delta_*)-(w_*+\delta_*)>2\delta$ and $s_2\le L/2+\delta_*+w_*+\delta_*<L-2\delta$. Thus, both transition layers remain separated from $s=0,L$. Moreover, $d_\Gamma(s_1,s_2)=\min(2w,L-2w)>4\delta$, so the two transition layers are also separated from each other. 
Let $\chi$ be a smooth cutoff function satisfying
\begin{align*}
    \mathrm{supp}\chi\subset\{s|\ |s|<\delta\},\qquad 
    \chi(s)=1\quad(|s|\le\delta/2)
\end{align*}
Define the cutoff functions localized near the transition layers by $\chi_i(s):=\chi(s-s_i)$ for $i=1,2$. For $|w-w_*|<\delta_*$, we have $\mathrm{dist}_\Gamma(\mathrm{supp}\chi_1,\mathrm{supp}\chi_2)>2\delta$. 
We also define
\begin{align*}
    u_0(s;w,s_0,\bar v):=
    \begin{cases}
        h_+(\bar v)\quad s\in (s_1,s_2),\\
        h_-(\bar v)\quad s\notin (s_1,s_2)
    \end{cases}
\end{align*}

Using these functions, we define approximate solutions for $u$ and $v|_\Gamma$ by
\begin{align*}
    u_\eps(s;w,s_0)&:=u_0(s;w,s_0,\bar v_\eps)
        +\chi_1(s)\bigg\{q_\eps\bigg(\frac{s-s_1}{\eps}\bigg)-u_0(s;w,s_0,\bar v_\eps)\bigg\}\\
        &\qquad+\chi_2(s)\bigg\{q_\eps\bigg(\frac{s_2-s}{\eps}\bigg)-u_0(s;w,s_0,\bar v_\eps)\bigg\}\\
    v_\eps(s;\lambda)&=S_{\eps,\mu}[u_\eps(\cdot;w,s_0)](s)
\end{align*}
Here, we collect the parameters as $\lambda:=(\mu,w,s_0)$. Note that $u_\eps$ is smooth. Throughout the remainder of the paper, $C$ denotes a generic constant independent of $\eps$ and $\lambda$. Fix a sufficiently large constant $M_*>0$ and define $\Lambda:=\{\lambda=(\mu,w,s_0)\mid |\mu|<M_*,\ |w-w_*|<\delta_*,\ |s_0-s_{0,*}|<\delta_*\}$. 
The parameter $\lambda$ will be determined in Proposition~\ref{prop:lambda_eps-existence}. 

We summarize some properties of the Green's function which will be used hereafter.
For each $\sigma\in[0,L)$, the function $G_\Gamma(\cdot,\sigma)$ is $L$-periodic and smooth on $[0,L)\setminus\{\sigma\}$, and satisfies
\begin{align*}
G_\Gamma(s,\sigma)&=G_\Gamma(\sigma,s)\quad\text{for }s,\sigma\in[0,L),\ s\neq \sigma
\end{align*}
The function $H_\Gamma$ extends to a smooth function on all of $[0,L)\times[0,L)$ and satisfies
\begin{align*}
H_\Gamma(s,\sigma)&=H_\Gamma(\sigma,s)
\quad \text{for }s,\sigma\in[0,L)
\end{align*}
We will occasionally use the following estimate, which holds for some constant $C$ and all $s,\sigma\in[0,L)$: 
\begin{align}\label{eq:G_Gamma-est}
    |G_\Gamma(s,\sigma)|\le C(1+|\log d_\Gamma(s,\sigma)|).
\end{align}

\subsection{Expansions of approximate solutions}
The following expansions hold for $u_\eps, v_\eps$ in inner regions. 
\begin{prop}\label{prop:u-expansion}
    Uniformly for $|w-w_*|<\delta_*$, $|s_0-s_{0,*}|<\delta_*$, and $|\xi|<\delta/\eps$, we have
    \begin{align*}
        u_\eps(s_1+\eps\xi;w,s_0)=q_\eps(\xi)+O(\eps^2),\quad 
        u_\eps(s_2-\eps\xi;w,s_0)=q_\eps(\xi)+O(\eps^2)
    \end{align*}
\end{prop}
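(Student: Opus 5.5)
We prove the claim for the first layer; the second follows by the symmetry $s\mapsto s_2-\eps\xi$. Let $s=s_1+\eps\xi$ with $|\xi|<\delta/\eps$, so $|s-s_1|<\delta$. Because the supports of $\chi_1,\chi_2$ are more than $2\delta$ apart, $\chi_2(s)=0$. Hence the definition reduces to
\begin{align*}
u_\eps(s)-q_\eps(\xi)=(1-\chi_1(s))\big\{u_0(s;w,s_0,\bar v_\eps)-q_\eps(\xi)\big\}.
\end{align*}
On $|s-s_1|\le\delta/2$ we have $\chi_1=1$, so the difference vanishes identically. It therefore remains to treat $\delta/2\le|s-s_1|<\delta$, that is, $\delta/(2\eps)\le|\xi|<\delta/\eps$.

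In that annulus we first identify $u_0$ with the correct limit of $q_\eps$. For $\xi>0$ we have $s\in(s_1,s_1+\delta)\subset(s_1,s_2)$, because $s_2-s_1=2w>4\delta$; hence $u_0=h_+(\bar v_\eps)$. For $\xi<0$ we have $s\in(s_1-\delta,s_1)$, and this lies outside $(s_1,s_2)$ since $s_1>2\delta$ keeps it away from the periodic wrap; hence $u_0=h_-(\bar v_\eps)$. Using $|1-\chi_1|\le C$, we get
\begin{align*}
|u_\eps(s)-q_\eps(\xi)|\le C\,\big|q_\eps(\xi)-h_{\pm}(\bar v_\eps)\big|,\qquad \pm\xi>0.
\end{align*}

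Next we apply \eqref{eq:q_eps-derivative-estimate} with $j=0$ from Proposition~\ref{prop:q_eps}, which gives
\begin{align*}
|q_\eps(\xi)-h_\pm(\bar v_\eps)|\le C\big(\eps|\xi|^{-1}+e^{-\gamma_U|\xi|}\big)\le C\big(\eps\cdot 2\eps\delta^{-1}+e^{-\gamma_U\delta/(2\eps)}\big)=O(\eps^2).
\end{align*}
This bound is uniform in $\lambda$, because $q_\eps$ and $\bar v_\eps$ do not depend on $(w,s_0)$, and $\delta$ was fixed uniformly for $|w-w_*|<\delta_*$ and $|s_0-s_{0,*}|<\delta_*$. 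The estimate is only used for large $|\xi|\ge\delta/(2\eps)$, which is the asymptotic regime in which \eqref{eq:q_eps-derivative-estimate} holds once $\eps$ is small.

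The second statement is the same argument with $s=s_2-\eps\xi$. Now $\chi_1=0$ and $u_\eps(s)-q_\eps(\xi)=(1-\chi_2)\{u_0-q_\eps(\xi)\}$. For $\xi>0$ the point $s$ lies inside the pulse, where $u_0=h_+$. For $\xi<0$ it lies outside, where $u_0=h_-$.

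The main point requiring care is that the decay of $q_\eps$ is only algebraic. The factor $\eps$ in front of $|\xi|^{-1}$ in \eqref{eq:q_eps-derivative-estimate} is exactly what produces $O(\eps^2)$ rather than merely $O(\eps)$. The coarse bound $O(|\xi|^{-1})$ from Step~1 of that proof would not be enough.
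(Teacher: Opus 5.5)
Your proof is correct and follows the paper's argument. You reduce to $(1-\chi_1)(u_0-q_\eps(\xi))$, note it vanishes for $|\xi|<\delta/(2\eps)$, and apply the $j=0$ case of \eqref{eq:q_eps-derivative-estimate} on $\delta/(2\eps)\le|\xi|<\delta/\eps$ to get $O(\eps^2)$; you also make explicit why $\chi_2=0$ there and why $u_0=h_\pm(\bar v_\eps)$ on each side, which the paper leaves implicit.
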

\begin{proof}
    We prove only the estimate near $s_1$. We have
    \begin{align*}
        u_\eps(s_1+\eps\xi;w,s_0)-q_\eps(\xi)=\big(1-\chi_1(s_1+\eps\xi)\big)\big(u_0(s_1+\eps\xi;w,s_0,\bar v_\eps)-q_\eps(\xi)\big)
    \end{align*}
    The right-hand side vanishes for $|\xi|<\delta/(2\eps)$. 
    For $\delta/(2\eps)\le\pm\xi\le\delta/\eps$, \eqref{eq:q_eps-derivative-estimate} gives
    \begin{align*}
        |u_\eps(s_1+\eps\xi;w,s_0)-q_\eps(\xi)|
        &\le C\big|h_\pm(\bar v_\eps)-q_\eps(\xi)\big|\le C\bigg(\frac{\eps}{|\xi|}+e^{-\gamma_U|\xi|}\bigg)\le C\eps^2
    \end{align*}
\end{proof}

\begin{prop}\label{prop:v-expansion1}
    Decompose the Green's function as $G_\Gamma(s,\sigma)=-\frac1\pi\log|s-\sigma|+\tilde G_\Gamma(s,\sigma)$, where $\tilde G_\Gamma$ is smooth. 
    Uniformly for $\lambda\in\Lambda$ and $|s-s_1|\le\delta$, we have
    \begin{align*}
        v_\eps(s;\lambda)&=\bar v_\eps+\frac{\eps}{D}(\cP q_\eps)\big((s-s_1)/\eps\big)
        +\eps^2\bigg[\mu+\frac{\Delta h(\bar v_\eps)}{D}\big(-\partial_\sigma \tilde G_\Gamma(s,s_1)+\partial_\sigma G_\Gamma(s,s_2)\big)\bigg]
        +O(\eps^3),\\
    \end{align*}
    Similarly, uniformly for $\lambda\in\Lambda$ and $|s-s_2|\le\delta$, 
    \begin{align*}
        v_\eps(s;\lambda)&=\bar v_\eps+\frac{\eps}{D}(\cP q_\eps)\big((s_2-s)/\eps\big)
        +\eps^2\bigg[\mu+\frac{\Delta h(\bar v_\eps)}{D}\big(-\partial_\sigma G_\Gamma(s,s_1)+\partial_\sigma\tilde G_\Gamma(s,s_2)\big)\bigg]
        +O(\eps^3).
    \end{align*}
\end{prop}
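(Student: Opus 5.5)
The plan is to start from the representation defining $v_\eps=S_{\eps,\mu}[u_\eps]$,
\begin{align*}
v_\eps(s;\lambda)=\bar v_\eps+\eps^2\mu+\frac{\eps^2}{D}\int_\Gamma G_\Gamma(s,\sigma)\,u_\eps''(\sigma)\,d\sigma ,
\end{align*}
and to split the integral by the support of $u_\eps''$. Outside $\mathrm{supp}\chi_1\cup\mathrm{supp}\chi_2$ we have $u_\eps=u_0$, which is constant there, so $u_\eps''=0$. Hence the integral equals $I_1+I_2$, where $I_i$ is taken over $|\sigma-s_i|<\delta$. We treat $|s-s_1|\le\delta$; the case near $s_2$ follows in the same way after the reflection $\xi\mapsto-\xi$.

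\emph{The far layer $I_2$.} Since $d_\Gamma(s,\sigma)>2\delta$ on this range, $G_\Gamma(s,\cdot)$ is smooth there. Integrate by parts once; the boundary terms vanish because $u_\eps'=0$ at the edges of $\mathrm{supp}\chi_2$. This gives $I_2=-\int\partial_\sigma G_\Gamma(s,\sigma)u_\eps'(\sigma)\,d\sigma$. The total mass of $u_\eps'$ over this layer is $h_-(\bar v_\eps)-h_+(\bar v_\eps)=-\Delta h(\bar v_\eps)$. Therefore
\begin{align*}
I_2=\Delta h(\bar v_\eps)\,\partial_\sigma G_\Gamma(s,s_2)-\int\bigl(\partial_\sigma G_\Gamma(s,\sigma)-\partial_\sigma G_\Gamma(s,s_2)\bigr)u_\eps'(\sigma)\,d\sigma .
\end{align*}
In the remainder write $\sigma=s_2-\eps\eta$. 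The $G$-difference is $O(\eps|\eta|)$. On $\{\chi_2=1\}$ we have $u_\eps'\,d\sigma=O(|q_\eps'(\eta)|)\,d\eta$, and by \eqref{eq:q_eps-derivative-estimate}, $q_\eps'=O(\eps|\eta|^{-2}+e^{-\gamma_U|\eta|})$. The remainder is thus bounded by $C\eps\int_{|\eta|<\delta/\eps}|\eta|\bigl(\eps|\eta|^{-2}+e^{-\gamma_U|\eta|}\bigr)\,d\eta=O(\eps)$. On the cutoff annulus, $u_\eps'$ also contains $\chi_2'(q_\eps-u_0)=O(\eps^2)$, which contributes $O(\eps^2)$. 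After multiplying by $\eps^2/D$, these errors are $O(\eps^3)$.

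\emph{The near layer $I_1$.} Use $G_\Gamma=-\frac1\pi\log|s-\sigma|+\tilde G_\Gamma$. The $\tilde G_\Gamma$ part is handled exactly as $I_2$. Here the total mass of $u_\eps'$ is $+\Delta h(\bar v_\eps)$, so this part gives $-\Delta h(\bar v_\eps)\partial_\sigma\tilde G_\Gamma(s,s_1)+O(\eps)$.

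For the logarithmic part, rescale by $s=s_1+\eps\xi$ and $\sigma=s_1+\eps\eta$, and first replace $u_\eps$ by $q_\eps((\cdot-s_1)/\eps)$ on all of $\bR$. Then $\frac{\eps^2}{D}\int-\frac1\pi\log|s-\sigma|\,\eps^{-2}q_\eps''(\eta)\,\eps\,d\eta$ splits into two pieces:
\begin{itemize}
\item the constant $-\frac{\eps}{\pi D}\log\eps\int_\bR q_\eps''\,d\eta$, which vanishes because $q_\eps'(\pm\infty)=0$;
\item the main term $\frac{\eps}{D}(\cP q_\eps)(\xi)$.
\end{itemize}
It remains to control two replacement errors.

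\emph{First error: inside $|\sigma-s_1|<\delta$.} There $u_\eps-q_\eps=(1-\chi_1)(u_0-q_\eps)$. This vanishes for $|\eta|<\delta/(2\eps)$. Elsewhere in the layer it is $O(\eps^2)$ together with its first and second $\sigma$-derivatives: each $\sigma$-derivative costs $\eps^{-1}$, but \eqref{eq:q_eps-derivative-estimate} gains $|\eta|^{-1}\sim\eps$ per derivative. Since this difference is supported at distance $\ge\delta/2$ from the singularity when $|s-s_1|\le\delta/4$, the log kernel is harmless there and the error is $O(\eps^2)$. For general $|s-s_1|\le\delta$, integrate by parts to first-derivative form. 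The resulting kernel $\log$ or $(s-\sigma)^{-1}$ in principal-value form is integrable against an $O(\eps^2)$ $C^2$ function, so the error is $O(\eps^2|\log\eps|)$ at worst.

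\emph{Second error: the tail $|\eta|>\delta/\eps$ added to the $q_\eps$ integral.} Integrating by parts once, $q_\eps'=O(\eps|\eta|^{-2})$ against a kernel $\sim|\xi-\eta|^{-1}$ gives a contribution $O(\eps^2)$ in the $\eta$-integral, and even the $\log$ form gives $O(\eps^2|\log\eps|)$. After the prefactor $\eps$ (from $\frac{\eps}{D}$ in front), both errors are $O(\eps^3|\log\eps|)$.

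I expect the main obstacle to be exactly these tail estimates. Because $q_\eps$ decays only algebraically, one cannot discard the exterior as exponentially small. Instead one must check that each $O(\eps|\eta|^{-k})$ tail, combined with the logarithmic kernel, loses at most a factor $|\log\eps|$.

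To reach the sharp $O(\eps^3)$ rather than $O(\eps^3|\log\eps|)$, I would first try to keep everything in the first-derivative principal-value form. In that form the kernel is $(s-\sigma)^{-1}$, bounded on the tail region, and the tail integral $\int_{|\eta|>\delta/\eps}\eps|\eta|^{-2}\,d\eta=O(\eps^2)$ carries no logarithm.

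Collecting all terms gives the stated expansion. The uniformity in $\lambda\in\Lambda$ follows because every constant depends only on the smooth Green's-function bounds away from the diagonal and on Proposition~\ref{prop:q_eps}.
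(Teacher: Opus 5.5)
Your overall structure matches the paper's. The far layer and the smooth part $\tilde G_\Gamma$ are handled by one integration by parts, using the total jumps $\pm\Delta h(\bar v_\eps)$ of $u_\eps$ across the two layers plus a Taylor remainder of size $\eps\int|\eta q_\eps'|\,d\eta=O(\eps)$. The logarithmic part is rescaled into $\frac{\eps}{D}(\cP q_\eps)(\xi)$ using $\int_\bR q_\eps''\,d\eta=0$. Those steps are fine.

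The gap is in the two replacement errors of the logarithmic part. As written you only reach $O(\eps^3|\log\eps|)$, which does not give the stated $O(\eps^3)$. The repair you sketch also fails as stated:
\begin{itemize}
\item The statement allows $|s-s_1|$ up to $\delta$, so the tail $|\sigma-s_1|\ge\delta$ reaches $\sigma=s$, and $(s-\sigma)^{-1}$ is not bounded there.
\item Integrating the annulus piece and the tail piece by parts separately creates boundary terms $\log|s-s_1\mp\delta|\,\eps^{-1}q_\eps'(\pm\delta/\eps)$. These blow up as $|s-s_1|\to\delta$ and cancel only after the two pieces are recombined.
\end{itemize}

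The missing observation is that no integration by parts is needed. The logarithm is locally integrable, and your pointwise bounds already carry a factor $\eps^2$.
\begin{itemize}
\item In the annulus you showed $(u_\eps-q_\eps((\cdot-s_1)/\eps))''=O(\eps^2)$ pointwise. So the first error is at most $C\eps^4\sup_{|s-s_1|\le\delta}\int_{|\sigma-s_1|<\delta}\bigl|\log|s-\sigma|\bigr|\,d\sigma=O(\eps^4)$.
\item On the tail, \eqref{eq:q_eps-derivative-estimate} gives $\eps^{-2}q_\eps''((\sigma-s_1)/\eps)=O(\eps^2|\sigma-s_1|^{-3})$. With $t=\sigma-s_1$, the second error is at most $C\eps^4\int_{|t|\ge\delta}\bigl|\log|\eps\xi-t|\bigr|\,|t|^{-3}\,dt=O(\eps^4)$, uniformly for $|\eps\xi|\le\delta$.
\end{itemize}
Hence both errors are $O(\eps^4)$; the algebraic decay of $q_\eps$ is not an obstruction at this order.

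The paper arranges this more economically. It applies the $\log$/$\tilde G_\Gamma$ splitting only on $|\sigma-s_1|<\delta/2$, where $u_\eps\equiv q_\eps((\cdot-s_1)/\eps)$ exactly. The only replacement error is then the tail $|\eta|\ge\delta/(2\eps)$ of $\cP q_\eps$, bounded exactly as above. The annulus is kept with the full kernel $G_\Gamma$ and bounded by $C\eps^4\int_\Gamma|G_\Gamma(s,\sigma)|\,d\sigma$ via \eqref{eq:G_Gamma-est}.
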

\begin{proof}
    See Appendix~\ref{app:v-expansion1}. 
\end{proof}

The following expansion holds for $v_\eps$ in the outer region. 
\begin{prop}\label{prop:v-expansion2}
    Uniformly for $\lambda\in\Lambda$ and $s\in[0,L)$ such that $\min\big(d_\Gamma(s,s_1),d_\Gamma(s,s_2)\big)\ge\delta/2$, we have
    \begin{align*}
        v_\eps(s;\lambda)&=\bar v_\eps + O(\eps^2)
    \end{align*}
\end{prop}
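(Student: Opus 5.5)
We start from the representation $v_\eps(s;\lambda)=\bar v_\eps+\eps^2\mu+\frac{\eps^2}{D}\int_\Gamma G_\Gamma(s,\sigma)u_\eps''(\sigma)\,d\sigma$. It therefore suffices to show that $\eps^2\int_\Gamma G_\Gamma(s,\sigma)u_\eps''(\sigma)\,d\sigma=O(\eps^2)$, uniformly for $s$ at distance at least $\delta/2$ from both layers. Since $u_0$ is piecewise constant, $u_\eps''$ is supported in $\mathrm{supp}\chi_1\cup\mathrm{supp}\chi_2$. We treat the contribution from $\chi_1$; the one from $\chi_2$ is symmetric. Write $\phi_1:=\chi_1\{q_\eps((\cdot-s_1)/\eps)-u_0\}$, so that $u_\eps''=\phi_1''$ near $s_1$.

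We split $\mathrm{supp}\chi_1$ into two regions.

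\emph{Near region} $\{|\sigma-s_1|<\delta/2\}$. Here $\chi_1\equiv1$ and $\phi_1''(\sigma)=\eps^{-2}q_\eps''((\sigma-s_1)/\eps)$. The kernel $G_\Gamma(s,\cdot)$ is smooth on this interval, with bounded derivatives uniformly in $\lambda$: if $|s-s_1|\ge\delta/2$, the singularity at $\sigma=s$ stays at distance of order $\delta$ once we shrink the near region slightly (e.g.\ to $|\sigma-s_1|<\delta/4$ and adjust the intermediate region accordingly). Integrating by parts twice moves both derivatives onto $G_\Gamma$, and the boundary terms involve $q_\eps'(\pm\delta/(4\eps))=O(\eps^{3})$ and $q_\eps(\pm\delta/(4\eps))-h_\pm=O(\eps^{2})$ by \eqref{eq:q_eps-derivative-estimate}. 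Alternatively, we integrate by parts once to get $\eps^{-1}\int q_\eps'((\sigma-s_1)/\eps)\,\partial_\sigma G_\Gamma\,d\sigma$. After the change of variables $\sigma=s_1+\eps\eta$ this equals $\int q_\eps'(\eta)\,\partial_\sigma G_\Gamma(s,s_1+\eps\eta)\,d\eta$, which is bounded because $q_\eps'\in L^1$ (it decays like $|\eta|^{-2}$) and $\partial_\sigma G_\Gamma$ is bounded there. The near-region contribution is thus $O(1)$, and multiplied by $\eps^2$ it gives $O(\eps^2)$.

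\emph{Intermediate region} $\{\delta/4\le|\sigma-s_1|\le\delta\}$. Here $\phi_1''$ is a sum of terms $\chi^{(k)}\cdot\eps^{-(2-k)}\partial^{2-k}\bigl(q_\eps-h_\pm(\bar v_\eps)\bigr)$. By \eqref{eq:q_eps-derivative-estimate} with $|\xi|\sim\delta/\eps$, each term is $O(\eps^{-(2-k)}\cdot\eps\cdot(\eps/\delta)^{3-k})=O(\eps^{2})$. Thus $\|\phi_1''\|_{L^\infty}=O(1)$ on this region, with room to spare. This region may contain $\sigma=s$, since $s$ can lie at distance between $\delta/2$ and $\delta$ from $s_1$. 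However, by \eqref{eq:G_Gamma-est} the kernel $G_\Gamma(s,\cdot)$ is uniformly in $L^1$, so $\int|G_\Gamma||\phi_1''|\le C$. This is why the singular point needs no separate treatment.

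Combining the two regions and the symmetric $\chi_2$ term gives $|v_\eps-\bar v_\eps|\le\eps^2|\mu|+C\eps^2=O(\eps^2)$, uniformly on $\Lambda$ since $|\mu|<M_*$. The main obstacle is the near-region bound: the naive estimate $\eps^{2}\cdot\eps^{-2}\|q_\eps''\|\cdot\eps=O(\eps)$ is too weak. The gain comes from integrating by parts against the kernel, which is smooth away from $s$. This step uses that the boundary terms are negligible and that $q_\eps'$ is integrable, which is exactly what the algebraic decay in Proposition~\ref{prop:q_eps} provides.
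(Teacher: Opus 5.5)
Your argument is correct, but it is organized differently from the paper's. The paper splits according to the position of $s$. For $\delta/2\le d_\Gamma(s,s_1)<\delta$ it invokes the inner expansion of Proposition~\ref{prop:v-expansion1} (proved in the appendix), and it makes the $O(\eps)$ term $\frac{\eps}{D}\cP q_\eps(\xi)$ of order $\eps^2$ using the decay \eqref{eq:cP-q_eps-expansion} at $|\xi|\ge\delta/(2\eps)$. For $s$ farther than $\delta$ from both layers, the kernel is smooth on $\mathrm{supp}\,u_\eps'$, and one integration by parts gives $C\eps^2(1+\|u_\eps'\|_{L^1})$.

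You instead split the integration variable $\sigma$ and treat all admissible $s$ at once. On $|\sigma-s_1|<\delta/4$ the kernel is uniformly smooth for every admissible $s$, so one integration by parts together with $\sup_\eps\|q_\eps'\|_{L^1(\bR)}<\infty$ gives an $O(1)$ integral. On the rest of $\mathrm{supp}\,\chi_1$ you have $u_\eps''=O(\eps^2)$, and the possible logarithmic singularity at $\sigma=s$ is absorbed by \eqref{eq:G_Gamma-est}.

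Your route is self-contained: it needs only \eqref{eq:q_eps-derivative-estimate}, not the appendix expansion or the decay of $\cP q_\eps$, and it avoids the case distinction in $s$. In effect, your integration by parts against a smooth kernel is the same mechanism that produces $\cP q_\eps(\xi)=O(|\xi|^{-1})$. The paper's route is shorter given what is already proved, and it makes explicit how the inner $O(\eps)$ term matches the outer $O(\eps^2)$ behaviour.

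Two cosmetic points. First, since $u_0$ jumps at $s_1$, your $\phi_1$ is discontinuous there, so on the near region you should write $u_\eps''=\eps^{-2}q_\eps''((\cdot-s_1)/\eps)$ directly rather than $\phi_1''$. Second, in your twice-integrated variant the boundary terms $[\partial_\sigma G_\Gamma\,q_\eps]$ are $O(1)$, not small, because the limits $h_\pm$ differ at the two ends; this is still enough for the $O(1)$ bound you need.
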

\begin{proof}
    If $\delta/2\le d_\Gamma(s,s_1)<\delta$, write $s=s_1+\eps\xi$. Then \eqref{eq:cP-q_eps-expansion} gives
    \begin{align*}
        \frac{\eps}{D}|\cP q_\eps(\xi)|\le C\eps(1+|\xi|)^{-1}\le C\eps^2.
    \end{align*}
    Together with Proposition~\ref{prop:v-expansion1}, this proves the desired estimate. The argument near $s_2$ is identical. 
    If $d_\Gamma(s,s_1)>\delta$ and $d_\Gamma(s,s_2)>\delta$, then $s$ is separated from $\mathrm{supp}\,u_\eps'$. 
    Integrating by parts in the definition of $v_\eps$ therefore yields
    \begin{align*}
        |v_\eps(s)-\bar v_\eps|&=\bigg|\eps^2\mu-\frac{\eps^2}{D}\int_\Gamma \partial_\sigma G_\Gamma(s,\sigma)u_\eps'(\sigma)d\sigma\bigg|
        \le C \eps^2(1+\|u'_\eps\|_{L^1(\Gamma)}).
    \end{align*}
    This proves the assertion. 
\end{proof}

\subsection{Residual estimates}
We now estimate the residual of the approximate solution. 
\begin{prop}\label{prop:F_eps-expansion}
    Uniformly for $\lambda\in\Lambda$ and $|\xi|<\delta/(2\eps)$, we have
    \begin{align*}
        F_{\eps,\mu}(u_\eps(\cdot;w,s_0))(s_1+\eps\xi)
        &=\eps^2\big(\mu+ R_{1,\eps}(w,s_0)\big)f_v\bigg(q_\eps(\xi), \bar v_\eps+\frac{\eps}{D}\cP q_\eps(\xi)\bigg) + O(\eps^{3}(1+|\xi|)),\\
        F_{\eps,\mu}(u_\eps(\cdot;w,s_0))(s_2-\eps\xi)
        &=\eps^2\big(\mu+ R_{2,\eps}(w,s_0)\big)f_v\bigg(q_\eps(\xi), \bar v_\eps+\frac{\eps}{D}\cP q_\eps(\xi)\bigg) + O(\eps^{3}(1+|\xi|))
    \end{align*}
    
    where
    \begin{align*}
        R_{1,\eps}(w,s_0)&:=\frac{\Delta h(\bar v_\eps)}{D}\big(-\partial_\sigma H_\Gamma(s_1,s_1)+\partial_\sigma G_\Gamma(s_1,s_2)\big),\\
        R_{2,\eps}(w,s_0)&:=\frac{\Delta h(\bar v_\eps)}{D}\big(-\partial_\sigma G_\Gamma(s_2,s_1)+\partial_\sigma H_\Gamma(s_2,s_2)\big)
    \end{align*}
\end{prop}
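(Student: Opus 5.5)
We work near $s_1$; the computation near $s_2$ is identical after the reflection $\xi\mapsto -\xi$. For $|\xi|<\delta/(2\eps)$ we have $\chi_1(s_1+\eps\xi)=1$ and $\chi_2(s_1+\eps\xi)=0$, since $\operatorname{dist}(\mathrm{supp}\chi_1,\mathrm{supp}\chi_2)>2\delta$. Hence $u_\eps(s_1+\eps\xi)=q_\eps(\xi)$ exactly, and $\eps^2u_\eps''(s_1+\eps\xi)=q_\eps''(\xi)$. Proposition~\ref{prop:v-expansion1} applies, and the auxiliary equation \eqref{eq:auxiliary-problem} gives
\begin{align*}
F_{\eps,\mu}(u_\eps)(s_1+\eps\xi)
&=q_\eps''(\xi)+f\Big(q_\eps(\xi),\,\bar v_\eps+\tfrac{\eps}{D}\cP q_\eps(\xi)+\eps^2 A(s_1+\eps\xi)+O(\eps^3)\Big)\\
&=f\big(q_\eps,\bar v_\eps+\tfrac{\eps}{D}\cP q_\eps+\eps^2A+O(\eps^3)\big)-f\big(q_\eps,\bar v_\eps+\tfrac{\eps}{D}\cP q_\eps\big),
\end{align*}
where
\[
A(s):=\mu+\frac{\Delta h(\bar v_\eps)}{D}\big(-\partial_\sigma\tilde G_\Gamma(s,s_1)+\partial_\sigma G_\Gamma(s,s_2)\big).
\]

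Next we Taylor expand $f$ in its second argument. Both arguments stay in a fixed compact set, because $\cP q_\eps$ is bounded by \eqref{eq:cP-q_eps-expansion} and $\mu$ is bounded on $\Lambda$. This gives
\[
F_{\eps,\mu}(u_\eps)(s_1+\eps\xi)=\eps^2A(s_1+\eps\xi)\,f_v\big(q_\eps,\bar v_\eps+\tfrac{\eps}{D}\cP q_\eps\big)+O(\eps^3)+O(\eps^4).
\]
It remains to replace $A(s_1+\eps\xi)$ by $\mu+R_{1,\eps}(w,s_0)$. Since $\tilde G_\Gamma$ is smooth and $G_\Gamma(\cdot,s_2)$ is smooth near $s_1$ (the layers are separated by more than $4\delta$), the mean value theorem gives
\[
A(s_1+\eps\xi)=A(s_1)+O(\eps|\xi|)
\]
uniformly on $\Lambda$. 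Multiplying by $\eps^2 f_v$, with $f_v$ bounded, produces the error $O(\eps^3|\xi|)$. This is the reason the remainder in the statement is $O(\eps^3(1+|\xi|))$ rather than $O(\eps^3)$.

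The remaining point, and the only one needing care, is identifying $\partial_\sigma\tilde G_\Gamma(s_1,s_1)$ with $\partial_\sigma H_\Gamma(s_1,s_1)$. By definition,
\[
\tilde G_\Gamma(s,\sigma)-H_\Gamma(s,\sigma)=-\frac1\pi\log\frac{|\gamma(s)-\gamma(\sigma)|}{|s-\sigma|}=:k(s,\sigma).
\]
Because $\gamma$ is an arc-length parametrization, $|\gamma(s)-\gamma(\sigma)|^2=(s-\sigma)^2\big(1-\kappa^2(s-\sigma)^2/12+\cdots\big)$. So $k$ is smooth, even in $s-\sigma$ to leading order, and satisfies $k(s,\sigma)=O((s-\sigma)^2)$. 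Consequently $\partial_\sigma k(s,\sigma)|_{\sigma=s}=0$. More precisely, $k$ depends smoothly on $(s,\sigma)$ and vanishes to second order on the diagonal, so its first derivatives vanish there, which is all we need. Hence $A(s_1)=\mu+R_{1,\eps}(w,s_0)$.

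Uniformity of all bounds in $\lambda\in\Lambda$ follows from the uniformity in Proposition~\ref{prop:v-expansion1} and from the smoothness of $H_\Gamma$ and of $G_\Gamma$ off the diagonal on compact sets. This completes the plan near $s_1$, and the $s_2$ case follows by reflection.
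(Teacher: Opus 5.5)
Your proposal is correct and follows essentially the same route as the paper. Both arguments use the exact identity $u_\eps(s_1+\eps\xi)=q_\eps(\xi)$, then Proposition~\ref{prop:v-expansion1} together with \eqref{eq:auxiliary-problem}, a first-order Taylor expansion in $v$, the mean value theorem to produce the $O(\eps^3|\xi|)$ term, and the bound $\tilde G_\Gamma-H_\Gamma=O(|s-\sigma|^2)$ near the diagonal. You also state explicitly that this bound forces the first derivatives of $\tilde G_\Gamma-H_\Gamma$, and not only its values, to vanish on the diagonal; that is exactly what is needed, and it is phrased slightly more precisely than the paper's $\tilde G_\Gamma(s,s)=H_\Gamma(s,s)$.
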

\begin{proof}
    Since it holds for $s,\sigma\in[0,L)$ that  
    \begin{align*}
        \tilde G_\Gamma(s,\sigma)-H_\Gamma(s,\sigma)
        =-\frac1\pi\log\frac{|\gamma(s)-\gamma(\sigma)|}{|s-\sigma|}=O(|s-\sigma|^2)\quad (s\to\sigma),
    \end{align*}
    we get 
    \begin{align}\label{eq:tildeG=H}
    \tilde G_\Gamma(s,s)=H_\Gamma(s,s).
    \end{align}
    Indeed, we can prove this as follows: since $\gamma$ is an arc-length parametrization, $|\gamma'(t)|=1$ holds and thus we get  $\gamma'(t)\cdot\gamma'(r)=1-\frac12|\gamma'(t)-\gamma'(r)|^2$ for $t,r\in[0,L)$.
    Therefore, together with the fact that $\gamma'$ is Lipschitz, we obtain
    \begin{align*}
        |\gamma(s)-\gamma(\sigma)|^2
        &=\int_\sigma^s\int_\sigma^s\gamma'(t)\cdot\gamma'(r)drdt\\
        &=|s-\sigma|^2-\frac12\int_\sigma^s\int_\sigma^s|\gamma'(t)-\gamma'(r)|^2drdt\\
        &=|s-\sigma|^2+O(|s-\sigma|^4).
    \end{align*}
    This yields
    \begin{align*}
        \log\frac{|\gamma(s)-\gamma(\sigma)|}{|s-\sigma|}
        =\log\big(1+O(|s-\sigma|^2)\big)
        =O(|s-\sigma|^2).
    \end{align*}
    
    By Taylor's theorem and \eqref{eq:tildeG=H}, we get
    \begin{align*}
        -\partial_\sigma \tilde G_\Gamma(s_1+\eps\xi,s_1)+\partial_\sigma G_\Gamma(s_1+\eps\xi,s_2)
        &=-\partial_\sigma \tilde G_\Gamma(s_1,s_1)+\partial_\sigma G_\Gamma(s_1,s_2)+O(\eps|\xi|)\\
        &=-\partial_\sigma H_\Gamma(s_1,s_1)+\partial_\sigma G_\Gamma(s_1,s_2)+O(\eps|\xi|).
    \end{align*}
    Hence, 
    \begin{align}\label{eq:partial-tilde-G-expansion}
        \frac{\Delta h(\bar v_\eps)}{D}\big(-\partial_\sigma \tilde G_\Gamma(s,s_1)+\partial_\sigma G_\Gamma(s,s_2)\big)
        =R_{1,\eps}(w,s_0)+O(\eps|\xi|).
    \end{align}
    Now, 
    \begin{align*}
        F_{\eps,\mu}(u_\eps(\cdot;w,s_0))(s)
        &=\eps^2u_\eps''(s;w,s_0)+f\big(u_\eps(s;w,s_0), v_\eps(s;\mu,w,s_0)\big).
    \end{align*}
    Since $|\xi|<\delta/(2\eps)$, 
    we have $u_\eps(s_1+\eps\xi)=q_\eps(\xi)$. Thus, Proposition~\ref{prop:v-expansion1} and \eqref{eq:partial-tilde-G-expansion} give
    \begin{align*}
        F_{\eps,\mu}(u_\eps(\cdot;w,s_0))(s_1+\eps\xi)
        &=q_\eps''(\xi)+f\bigg(q_\eps, \bar v_\eps+\frac{\eps}{D}\cP q_\eps(\xi)+\eps^2(\mu+R_{1,\eps})+O(\eps^3|\xi|)\bigg)+O(\eps^{3})\\
        &\overset{\eqref{eq:auxiliary-problem}}=\eps^2(\mu+R_{1,\eps})f_v\bigg(q_\eps(\xi), \bar v_\eps+\frac{\eps}{D}\cP q_\eps(\xi)\bigg)+O(\eps^{3}(1+|\xi|)).
    \end{align*}
    The argument near $s_2$ is identical. 
\end{proof}

\begin{prop}\label{prop:F_eps-L^2-estimate}
    Uniformly for $\lambda\in\Lambda$, we have
    \begin{align}\label{eq:F_eps-L^infty-estimate}
        \|F_{\eps,\mu}(u_\eps(\cdot;w,s_0))\|_{L^\infty(\Gamma)}=O(\eps^2)
    \end{align}
    In particular, since $\Gamma$ is compact, 
    \begin{align}\label{eq:F_eps-L^2-estimate}
        \|F_{\eps,\mu}(u_\eps(\cdot;w,s_0))\|_{L^2(\Gamma)}=O(\eps^2).
    \end{align}
\end{prop}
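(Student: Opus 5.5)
We split $\Gamma$ into three kinds of regions and bound $F_{\eps,\mu}(u_\eps)$ on each, uniformly in $\lambda\in\Lambda$. Region (a) is the inner cores $d_\Gamma(s,s_i)<\delta/2$. Region (b) is the gluing annuli $\delta/2\le d_\Gamma(s,s_i)<\delta$. Region (c) is the outer region where $\chi_1=\chi_2=0$.

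\textbf{Region (a).} We use Proposition~\ref{prop:F_eps-expansion}. Writing $s=s_1+\eps\xi$ with $|\xi|<\delta/(2\eps)$, the leading term is $\eps^2(\mu+R_{1,\eps})f_v(\cdots)$. This term is $O(\eps^2)$, since $|\mu|<M_*$, $f_v$ is bounded on the bounded range of its arguments, and $R_{1,\eps}$ is bounded. The bound on $R_{1,\eps}$ holds because $H_\Gamma$ is smooth and $d_\Gamma(s_1,s_2)>4\delta$, so $\partial_\sigma G_\Gamma(s_1,s_2)$ stays away from the singularity. The remainder is $O(\eps^3(1+|\xi|))\le C\eps^3(1+\delta/(2\eps))=O(\eps^2)$. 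The point to note is that this remainder is $O(\eps^2)$ only in $L^\infty$, and not better. The neighborhood of $s_2$ is handled identically.

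\textbf{Region (b).} Take, say, $\delta/2\le |s-s_1|<\delta$ and write $s=s_1+\eps\xi$, so that $|\xi|\sim 1/\eps$. First, $u_\eps=u_0+\chi_1(q_\eps(\xi)-h_\pm(\bar v_\eps))$. By \eqref{eq:q_eps-derivative-estimate} with $j=0,1,2$, we have $q_\eps-h_\pm=O(\eps|\xi|^{-1})=O(\eps^2)$, $\eps^{-1}q_\eps'=O(\eps^{-1}\cdot\eps|\xi|^{-2})=O(\eps^2)$, and $\eps^{-2}q_\eps''=O(\eps^{-2}\eps|\xi|^{-3})=O(\eps^2)$. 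Expanding $\eps^2u_\eps''$ by the product rule, each term is $O(\eps^2)$; the terms involving $\chi_1'$ and $\chi_1''$ carry extra powers of $\eps$. Second, $u_\eps=h_\pm(\bar v_\eps)+O(\eps^2)$. Third, Proposition~\ref{prop:v-expansion2} gives $v_\eps=\bar v_\eps+O(\eps^2)$ on this set. Since $f(h_\pm(\bar v_\eps),\bar v_\eps)=0$ and $f$ is Lipschitz on bounded sets, it follows that $f(u_\eps,v_\eps)=O(\eps^2)$. Hence $F_{\eps,\mu}(u_\eps)=O(\eps^2)$ there.

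\textbf{Region (c).} Here $u_\eps=u_0=h_\pm(\bar v_\eps)$ is locally constant, so $u_\eps''=0$. Proposition~\ref{prop:v-expansion2} again gives $v_\eps=\bar v_\eps+O(\eps^2)$, hence $F_{\eps,\mu}(u_\eps)=f(h_\pm(\bar v_\eps),v_\eps)-f(h_\pm(\bar v_\eps),\bar v_\eps)=O(\eps^2)$.

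Combining (a)--(c) gives \eqref{eq:F_eps-L^infty-estimate}. The $L^2$ bound \eqref{eq:F_eps-L^2-estimate} then follows from $\|\cdot\|_{L^2(\Gamma)}\le L^{1/2}\|\cdot\|_{L^\infty(\Gamma)}$.

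The only delicate step is region (b). With merely algebraic decay, we must check that the $\eps|\xi|^{-j-1}$ tails in \eqref{eq:q_eps-derivative-estimate}, combined with the $\eps^{-j}$ factors produced by differentiation, still give $O(\eps^2)$ for every $j\le2$. This is exactly why the refined estimate of Proposition~\ref{prop:q_eps}, with the factor $\eps$ in front of the tail, is needed.
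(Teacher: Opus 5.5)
Your proposal is correct and follows the paper's proof essentially step for step. You use the same three-region split: the inner cores via Proposition~\ref{prop:F_eps-expansion}, the gluing annuli via \eqref{eq:q_eps-derivative-estimate} and Proposition~\ref{prop:v-expansion2}, and the outer region where $u_\eps=h_\pm(\bar v_\eps)$ and $v_\eps=\bar v_\eps+O(\eps^2)$, followed by the trivial passage from $L^\infty$ to $L^2$ on the compact curve. Your explicit product-rule bound on $\eps^2u_\eps''$ in the annuli fills in a step the paper leaves implicit when it invokes Proposition~\ref{prop:u-expansion}, which controls $u_\eps$ but not $u_\eps''$ (that term is in fact $O(\eps^4)$, and only the $j=0$ case there truly needs the factor $\eps$ in \eqref{eq:q_eps-derivative-estimate}, to get $u_\eps-h_\pm(\bar v_\eps)=O(\eps^2)$).
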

\begin{proof}
We divide $\Gamma$ as
\begin{align*}
[0,L)&=\{|s-s_i|<\delta/2,\ i=1,2\}\cup\{\delta/2\le|s-s_i|\le\delta,\ i=1,2\}\\
&\qquad\cup\{\min(|s-s_1|,|s-s_2|)>\delta\}=:\Gamma_1^*\cup\Gamma_2^*\cup\Gamma_3^*.
\end{align*}
Proposition~\ref{prop:F_eps-expansion} directly gives $\|F_{\eps,\mu}(u_\eps(\cdot))\|_{L^\infty(\Gamma_1^*)}=O(\eps^2)$, and Proposition~\ref{prop:u-expansion} and \ref{prop:v-expansion2} imply that $\|F_{\eps,\mu}(u_\eps(\cdot))\|_{L^\infty(\Gamma_2^*)}=O(\eps^2)$. It holds in $\Gamma_3^*$ that $u_\eps=h_\pm(\bar v_\eps)$ , and  $v_\eps=\bar v_\eps+O(\eps^2)$ by Proposition~\ref{prop:v-expansion2}. Thus, we get $\|F_{\eps,\mu}(u_\eps(\cdot))\|_{L^\infty(\Gamma_3^*)}=f(h_\pm(\bar v_\eps), \bar v_\eps)+O(\eps^2)=O(\eps^2)$. Combining these estimates yields the assertion.
\end{proof}

\section{Reduction to finite dimension}\label{sec:reduction}
We use the Lyapunov–Schmidt method to establish the existence of a true solution near the approximate solution constructed in the preceding section. 
Set the weighted Sobolev space with the norm
\begin{align*}
    \|\phi\|_{H^2_\eps(\Gamma)}:=\|\phi\|_{L^2(\Gamma)}+\eps\|\phi'\|_{L^2(\Gamma)}+\eps^2\|\phi''\|_{L^2(\Gamma)}.
\end{align*}
The Sobolev embedding estimate $\|\phi\|_{L^\infty(\Gamma)}\le C\eps^{-1/2}\|\phi\|_{H^2_\eps(\Gamma)}$ then holds. 
Define $Z_{1,\eps,w,s_0}(s):=\chi_1(s)q_\eps'((s-s_1)/\eps)$ and $Z_{2,\eps,w,s_0}(s):=\chi_2(s)q_\eps'((s_2-s)/\eps)$. Moreover, \eqref{eq:q_eps-convergence} gives, for $i=1,2$, 
\begin{align}\label{eq:Z-est}
    \|Z_{i,\eps,w,s_0}\|_{L^1(\Gamma)}=O(\eps),\quad
    \|Z_{i,\eps,w,s_0}\|_{L^2(\Gamma)}^2=\eps\|U'\|_{L^2(\bR)}^2+O(\eps^2).
\end{align}
Define the subspaces of $L^2(\Gamma)$ and $H^2_\eps(\Gamma)$ by
\begin{align*}
\cC_{\eps,w,s_0}:=\mathrm{span}\{Z_{1,\eps,w,s_0},Z_{2,\eps,w,s_0}\}\subset L^2(\Gamma),\quad
\cK_{\eps,w,s_0}:=\mathrm{span}\{Z_{1,\eps,w,s_0},Z_{2,\eps,w,s_0}\}\subset H^2_\eps(\Gamma)
\end{align*}
and 
their orthogonal complements $\cC_{\eps,w,s_0}^\perp,\cK_{\eps,w,s_0}^\perp$ with respect to the scalar product $L^2(\Gamma)$.
We then define projections by $\Pi_{\eps,w,s_0}:L^2(\Gamma)\to\cC_{\eps,w,s_0}$ and $\Pi_{\eps,w,s_0}^\perp:L^2(\Gamma)\to\cC_{\eps,w,s_0}^\perp$.  
We seek $\lambda=(\mu,w,s_0)\in\Lambda$ and $\phi\in\cK_{\eps,w,s_0}^\perp$ such that $u(s)=u_\eps(s;w,s_0)+\phi(s)$ and $\mu$ solves \eqref{eq:F_eps-def}. In particular, we require
\begin{align}\label{eq:F_eps=0}
    F_{\eps,\mu}(u_\eps(\cdot;w,s_0)+\phi)=0.
\end{align}

\subsection{Linearized operator and nonlinear term}
We decompose the left-hand side of \eqref{eq:F_eps=0} as follows: 
\begin{align*}
    F_{\eps,\mu}(u_\eps(\cdot;w,s_0)+\phi)
    =F_{\eps,\mu}(u_\eps(\cdot;w,s_0))+\cL_{\eps,\lambda}\phi+N_{\eps,\lambda}(\phi),
\end{align*}
where the operators are defined by
\begin{align*}
    \cT_\eps\phi&:=\frac{\eps^2}{D}\int_\Gamma G_\Gamma(\cdot,\sigma)\phi''(\sigma)d\sigma,\\
    \cL_{\eps,\lambda} \phi&:=\eps^2\phi''+f_u(u_\eps,v_\eps)\phi+f_v(u_\eps,v_\eps)\cT_\eps\phi,\\
    N_{\eps,\lambda}(\phi)&:=f(u_\eps+\phi,v_\eps+\cT_\eps\phi)-f(u_\eps,v_\eps)-f_u(u_\eps,v_\eps)\phi-f_v(u_\eps,v_\eps)\cT_\eps\phi.
\end{align*}

We now prove some properties of these operators.

\begin{lem}\label{lem:L-invertible}
    For  sufficiently small $\eps$ and every $\phi\in H^2_\eps(\Gamma)$, we have
    \begin{align}\label{eq:conv-estimate}
        \|\cT_\eps\phi\|_{L^2(\Gamma)}\le C\eps\|\phi\|_{H^2_\eps(\Gamma)},\quad
        \|\cT_\eps\phi\|_{L^\infty(\Gamma)}
        \le C\|\phi\|_{H^2_\eps(\Gamma)}.
    \end{align}
    
    Moreover, for  sufficiently small $\eps$, the operator
    \begin{align*}
        \Pi_{\eps,w,s_0}^\perp\cL_{\eps,\lambda}: \cK_{\eps,w,s_0}^\perp\to \cC_{\eps,w,s_0}^\perp
    \end{align*}
    is bijective, and its inverse is bounded uniformly for $\lambda\in\Lambda$ and sufficiently small $\eps$. 
\end{lem}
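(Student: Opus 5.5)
For the bounds \eqref{eq:conv-estimate} on $\cT_\eps$, I will use the splitting $G_\Gamma(s,\sigma)=-\frac1\pi\log|s-\sigma|+\tilde G_\Gamma(s,\sigma)$ of Proposition~\ref{prop:v-expansion1}; the logarithm is taken locally, with a cutoff, and the smooth periodic remainder is absorbed into $\tilde G_\Gamma$. Integrating by parts once in $\sigma$ gives
\[
\cT_\eps\phi(s)=\frac{\eps^2}{D}\Big(\frac1\pi\,\mathrm{p.v.}\!\int\frac{\phi'(\sigma)}{s-\sigma}\,d\sigma\Big)-\frac{\eps^2}{D}\int_\Gamma\partial_\sigma\tilde G_\Gamma(s,\sigma)\phi'(\sigma)\,d\sigma .
\]
The first term is a periodic Hilbert transform of $\phi'$ plus a smooth-kernel correction, so it is bounded on $L^2$. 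This gives $\|\cT_\eps\phi\|_{L^2}\le C\eps^2\|\phi'\|_{L^2}\le C\eps\|\phi\|_{H^2_\eps}$. For the $L^\infty$ bound, the Hilbert transform of $\phi'$ is controlled by the $H^{1/2+}$ norm of $\phi'$. Interpolation then gives
\[
\|\cT_\eps\phi\|_{L^\infty}\le C\eps^2\|\phi'\|_{L^2}^{1/4}\|\phi''\|_{L^2}^{3/4}+C\eps^2\|\phi'\|_{L^2}.
\]
Concretely, I will use $\|H\psi\|_{L^\infty}\le C\|\psi\|_{L^2}^{1/2}\|\psi'\|_{L^2}^{1/2}$, applied with $\psi=\phi'$, together with $\|\phi'\|_{L^2}\le\|\phi\|^{1/2}_{L^2}\|\phi''\|^{1/2}_{L^2}$. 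Writing $\eps^2=\eps^{1/2}\cdot\eps^{3/2}$, these combine to $\le C\eps^{1/2}\|\phi\|_{H^2_\eps}\le C\|\phi\|_{H^2_\eps}$.

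For the invertibility I will argue by contradiction in the standard Lyapunov--Schmidt way. First I prove an a priori estimate
\[
\|\phi\|_{H^2_\eps}\le C\|\Pi^\perp\cL_{\eps,\lambda}\phi\|_{L^2}\qquad\text{for }\phi\in\cK^\perp .
\]
Suppose instead that there are $\eps_k\to0$, $\lambda_k\in\Lambda$ and $\phi_k\in\cK^\perp$ with $\|\phi_k\|_{H^2_\eps}=1$ and $h_k:=\Pi^\perp\cL\phi_k\to0$. Write $\cL\phi_k=h_k+c_{1k}Z_1+c_{2k}Z_2$. Testing against $Z_i$, using $\cL$ nearly annihilating $Z_i$ and \eqref{eq:Z-est}, shows that $c_{ik}\|Z_i\|_{L^2}\to0$. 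The key point here is that $\cT_\eps Z_i$ is $O(\eps)$ on the inner scale because of the factor $\eps/D$, and is small elsewhere by Proposition~\ref{prop:q_eps}.

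Next I rescale near $s_1$ by setting $\tilde\phi_k(\xi)=\phi_k(s_1+\eps\xi)$. The normalization gives $\|\tilde\phi_k\|_{H^2(-\delta/\eps,\delta/\eps)}\le C\eps^{-1/2}$. Since this bound blows up, I will instead normalize the local pieces so that $\eps^{-1/2}\|\phi_k\|$ restricted to each layer is bounded. I then extract a weak limit $\tilde\phi$ satisfying $\cL\tilde\phi=0$, since the $\cT_\eps$ coupling is $O(\eps)$ in $L^2$ by \eqref{eq:conv-estimate}. Hence $\tilde\phi=cU'$, and the orthogonality to $Z_1$ passes to the limit and forces $c=0$.

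In the outer region $f_u(u_\eps,v_\eps)\le-a_0<0$ by Assumption~\ref{ass:f}(3) and Proposition~\ref{prop:v-expansion2}. Multiplying the equation by $\phi_k$ and integrating then gives $\|\phi_k\|_{L^2(\text{outer})}\to0$. In this step the term $f_v\cT_\eps\phi_k$ is bounded by $C\eps$ in $L^2$, again by \eqref{eq:conv-estimate}. Combining the local $L^2$ convergence in the inner regions with the outer decay shows $\|\phi_k\|_{L^2}\to0$, and elliptic regularity for $\eps^2\phi''$ upgrades this to $\|\phi_k\|_{H^2_\eps}\to0$, a contradiction.

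Surjectivity follows by a Fredholm argument. The operator $\cL_{\eps,\lambda}$ is $\eps^2\partial^2+f_u$, which is self-adjoint Fredholm of index zero, plus the perturbation $f_v\cT_\eps$. This perturbation is relatively compact, since it is of lower order: it behaves like $\eps^2$ times a first-order operator. The projected operator therefore has index zero, and injectivity gives bijectivity; uniformity of the inverse comes from the a priori estimate. I expect the main obstacle to be the blow-up step: keeping the $\cT_\eps\phi$ coupling uniformly small while avoiding the loss of $\eps^{-1/2}$ in the local normalization, since the nonlocal term mixes the two layers and the outer region.
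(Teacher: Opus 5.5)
Your proposal is correct, but apart from the $L^2$ bound it takes a different and longer route than the paper. The $L^2$ bound is the paper's own argument: integrate by parts once and use $L^2$-boundedness of the periodic Hilbert transform.

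For the $L^\infty$ bound, the paper uses a shorter argument. The logarithmic singularity makes $\sup_s\|G_\Gamma(s,\cdot)\|_{L^2(\Gamma)}$ finite, so Cauchy--Schwarz against $\phi''$ gives $|\cT_\eps\phi(s)|\le C\eps^2\|\phi''\|_{L^2}\le C\|\phi\|_{H^2_\eps}$ in one line. Your Hilbert-transform/Gagliardo--Nirenberg argument yields a sharper factor $\eps^{1/2}$, which is never needed. That factor already follows from $\|H\phi'\|_{L^\infty}\le C\|\phi'\|_{L^2}^{1/2}\|\phi''\|_{L^2}^{1/2}$ alone. Your second interpolation step is superfluous, and its exponent should read $\|\phi\|_{L^2}^{1/4}$, although either version suffices.

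For invertibility, the paper splits $\cL_{\eps,\lambda}=\cL_{\mathrm{loc},\eps}+f_v(u_\eps,v_\eps)\cT_\eps$. It takes uniform invertibility of the projected local operator from the standard Wei--Winter argument. By the first estimate in \eqref{eq:conv-estimate}, the nonlocal part has operator norm $O(\eps)$ from $H^2_\eps$ to $L^2$. A Neumann series then gives injectivity, surjectivity and the uniform bound all at once.

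You instead run the blow-up contradiction directly on the full nonlocal operator. You then need a separate Fredholm argument for surjectivity: compactness of $f_v\cT_\eps:H^2\to H^1\hookrightarrow L^2$, plus index bookkeeping for restricting to $\cK^\perp$ and projecting onto $\cC^\perp$. Both steps are valid. Your route is self-contained rather than outsourced, but costlier.

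The obstacle you anticipate does not actually arise. By homogeneity you may normalize $\|\phi_k\|_{H^2_\eps}=\eps_k^{1/2}$. Then \eqref{eq:conv-estimate} makes the stretched nonlocal term $O(\eps_k)$ in $L^2$ globally, whatever mixing occurs between the layers. This is exactly the smallness that lets the paper treat that term perturbatively.

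One step needs tightening. The ``outer region'' in your energy estimate must be $\{\min_i d_\Gamma(s,s_i)\ge R\eps\}$, not $\{\ge\delta/2\}$, because local weak convergence only controls $|\xi|\le R$. On that set, the sign condition $f_u(u_\eps,v_\eps)\le -a_0/2$ comes from the decay of $q_\eps$ and $\cP q_\eps$ in Proposition~\ref{prop:q_eps}, together with Propositions~\ref{prop:u-expansion} and~\ref{prop:v-expansion1}. Proposition~\ref{prop:v-expansion2} alone does not give it. The cutoff error, of order $1/R$ relative to $\eps_k$, is removed by letting $R\to\infty$ after $k\to\infty$.
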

\begin{proof}
    We first prove \eqref{eq:conv-estimate}. 
    The derivative of the Green's function can be decomposed as
    \begin{align*}
        \partial_\sigma G_\Gamma(s,\sigma)
        &=\frac1\pi\frac{(\gamma(s)-\gamma(\sigma))\cdot\gamma'(\sigma)}{|\gamma(s)-\gamma(\sigma)|^2}+\partial_\sigma H_\Gamma(s,\sigma)
        =-\frac1L\cot\frac{\pi(\sigma-s)}{L}+\tilde G_{1,\Gamma}(s,\sigma)
    \end{align*}
    where $\tilde G_{1,\Gamma}$ is smooth. Convolution with $-\frac1L\cot\frac{\pi\sigma}{L}$ is the periodic Hilbert transform, which is bounded on $L^2$. Hence, 
    \begin{align*}
        \|\cT_\eps\phi\|_{L^2(\Gamma)}
        &=  \bigg\|-\frac{\eps^2}{D}\mathrm{p.v.}\int_\Gamma \partial_\sigma G_\Gamma(\cdot,\sigma)\phi'(\sigma)d\sigma\bigg\|_{L^2(\Gamma)}
        \le C\eps^2\|\phi'\|_{L^2(\Gamma)}\le C\eps\|\phi\|_{H^2_\eps(\Gamma)}.
    \end{align*}
    By \eqref{eq:G_Gamma-est}, we have $\sup_{s\in[0,L)}\|G_\Gamma(s,\cdot)\|_{L^2(\Gamma)}<C$. Thus, 
    \begin{align*}
        \|\cT_\eps\phi\|_{L^\infty(\Gamma)}&=\frac{\eps^2}{D}\sup_{s\in[0,L)}\big|\langle G_\Gamma(s,\cdot),\phi''\rangle\big|
        \le C\eps^2\|\phi''\|_{L^2(\Gamma)}
        \le C\|\phi\|_{H^2_\eps(\Gamma)}.
    \end{align*}
    These prove \eqref{eq:conv-estimate}. 
    
    We decompose the linearized operator as
    \begin{align*}
    \cL_{\eps,\lambda}=\cL_{\mathrm{loc},\eps}+f_v(u_\eps,v_\eps)\cT_\eps.
    \end{align*}
    
    Arguing as in \cite[Propositions~4.2 and~4.3]{wei2001}, one can show that
    $\Pi_{\eps,w,s_0}^\perp\cL_{\mathrm{loc},\eps}:\cK_{\eps,w,s_0}^\perp\to\cC_{\eps,w,s_0}^\perp$ has a bounded inverse, uniformly for $\lambda\in\Lambda$ and sufficiently small $\eps$. The desired conclusion then follows from \eqref{eq:conv-estimate} and a Neumann series argument for sufficiently small $\eps$. 
\end{proof}

The following estimates hold for $N_{\eps,\lambda}$. 
\begin{lem}\label{lem:N_eps-estimate}
    Let $\phi_1,\phi_2\in H^2_\eps(\Gamma)$ satisfy $\|\phi_i\|_{H^2_\eps(\Gamma)}\le\eps^{1/2}$ for $i=1,2$. 
    Uniformly for $\lambda\in\Lambda$ and sufficiently small $\eps$, we have
    \begin{align}\label{eq:N_eps-estimate}
        \|N_{\eps,\lambda}(\phi_1)\|_{L^2(\Gamma)}\le C\eps^{-1/2}\|\phi_1\|_{H^2_\eps(\Gamma)}^2.
    \end{align}
    Moreover, 
    \begin{align}\label{eq:N_eps-N_eps-estimate}
        \|N_{\eps,\lambda}(\phi_1)-N_{\eps,\lambda}(\phi_2)\|_{L^2(\Gamma)}\le C\eps^{-1/2}\big(\|\phi_1\|_{H^2_\eps(\Gamma)}+\|\phi_2\|_{H^2_\eps(\Gamma)}\big)\|\phi_1-\phi_2\|_{H^2_\eps(\Gamma)}.
    \end{align}
\end{lem}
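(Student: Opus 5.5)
The plan is to write $N_{\eps,\lambda}$ as the second-order Taylor remainder of $f$ around $(u_\eps,v_\eps)$ and bound each factor using the $L^\infty$ controls already available. Set $\psi_i:=\cT_\eps\phi_i$. By \eqref{eq:conv-estimate},
\[
\|\psi_i\|_{L^\infty(\Gamma)}\le C\|\phi_i\|_{H^2_\eps(\Gamma)}\le C\eps^{1/2},
\qquad
\|\psi_i\|_{L^2(\Gamma)}\le C\eps\|\phi_i\|_{H^2_\eps(\Gamma)}.
\]
The Sobolev embedding gives $\|\phi_i\|_{L^\infty(\Gamma)}\le C\eps^{-1/2}\|\phi_i\|_{H^2_\eps(\Gamma)}\le C$.

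The arguments $(u_\eps+t\phi_i,\,v_\eps+t\psi_i)$ therefore stay in a fixed compact set, uniformly in $\lambda\in\Lambda$ and small $\eps$. This uses that $u_\eps$ is bounded and that $v_\eps=\bar v_\eps+O(\eps)$ in $L^\infty$, by Propositions~\ref{prop:v-expansion1}, \ref{prop:v-expansion2} and the boundedness of $\cP q_\eps$. Consequently all second derivatives of $f$ along these segments are bounded by a constant $C$.

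\textbf{Estimate \eqref{eq:N_eps-estimate}.} Taylor's formula with integral remainder gives the pointwise bound
\[
|N_{\eps,\lambda}(\phi_1)|\le C\big(|\phi_1|^2+|\psi_1|^2\big).
\]
I bound the two terms separately:
\[
\|\phi_1^2\|_{L^2}\le\|\phi_1\|_{L^\infty}\|\phi_1\|_{L^2}\le C\eps^{-1/2}\|\phi_1\|_{H^2_\eps}^2,
\qquad
\|\psi_1^2\|_{L^2}\le\|\psi_1\|_{L^\infty}\|\psi_1\|_{L^2}\le C\eps\|\phi_1\|_{H^2_\eps}^2.
\]
The second term is dominated by the first for small $\eps$, which yields \eqref{eq:N_eps-estimate}.

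\textbf{Estimate \eqref{eq:N_eps-N_eps-estimate}.} I use the identity
\[
N(\phi_1)-N(\phi_2)=\int_0^1\Big[\nabla f\big(u_\eps+\phi_t,\,v_\eps+\psi_t\big)-\nabla f(u_\eps,v_\eps)\Big]\cdot(\phi_1-\phi_2,\ \psi_1-\psi_2)\,dt,
\]
with $\phi_t=\phi_2+t(\phi_1-\phi_2)$ and $\psi_t=\cT_\eps\phi_t$, using linearity of $\cT_\eps$. The bracket is bounded pointwise by $C(|\phi_t|+|\psi_t|)$. This leads to four products, each estimated by placing one factor in $L^\infty$ and the other in $L^2$:
\begin{itemize}
\item $\|(|\phi_1|+|\phi_2|)\,|\phi_1-\phi_2|\|_{L^2}\le (\|\phi_1\|_{L^\infty}+\|\phi_2\|_{L^\infty})\|\phi_1-\phi_2\|_{L^2}$, which by the Sobolev embedding is at most $C\eps^{-1/2}(\|\phi_1\|_{H^2_\eps}+\|\phi_2\|_{H^2_\eps})\|\phi_1-\phi_2\|_{H^2_\eps}$.
\item $\|(|\phi_1|+|\phi_2|)\,|\psi_1-\psi_2|\|_{L^2}$: put $\phi_i$ in $L^\infty$ and $\psi_1-\psi_2$ in $L^2$, gaining a factor $\eps$.
\item $\|(|\psi_1|+|\psi_2|)\,|\phi_1-\phi_2|\|_{L^2}$: put $\psi_i$ in $L^\infty$ using \eqref{eq:conv-estimate}, which gives a bound without the $\eps^{-1/2}$ loss.
\item $\|(|\psi_1|+|\psi_2|)\,|\psi_1-\psi_2|\|_{L^2}$: both factors are controlled by \eqref{eq:conv-estimate}, again with no loss.
\end{itemize}
All four are bounded by the right-hand side of \eqref{eq:N_eps-N_eps-estimate}.

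There is no real obstacle here; the argument is bookkeeping. The one point to check is that every product places a factor in $L^\infty$ through either the Sobolev embedding or \eqref{eq:conv-estimate}, so the worst loss is exactly $\eps^{-1/2}$. The other point is that the assumption $\|\phi_i\|_{H^2_\eps}\le\eps^{1/2}$ keeps the arguments of $f$ in a compact set, so the constants are uniform in $\lambda$ and $\eps$.
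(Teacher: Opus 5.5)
Your proposal is correct and follows essentially the same route as the paper: a pointwise quadratic Taylor bound on $N_{\eps,\lambda}$, followed by placing one factor in $L^\infty$ (via the Sobolev embedding or \eqref{eq:conv-estimate}) and the other in $L^2$. You also spell out two points the paper leaves implicit: the four-term bookkeeping for the Lipschitz estimate \eqref{eq:N_eps-N_eps-estimate}, and why the arguments of $f$ stay in a compact set uniformly in $\lambda\in\Lambda$.
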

\begin{proof}
    The assumptions and Sobolev embedding give $\|\phi_i\|_{L^\infty(\Gamma)}\le C$ for $i=1,2$. 
    Thus, Taylor's theorem yields
    \begin{align*}
        |N_{\eps,\lambda}(\phi_1)(s)|
        &\le C\big(|\phi_1(s)|+|\cT_\eps\phi_1(s)|\big)^2.
    \end{align*}
    Combining these estimates with \eqref{eq:conv-estimate} gives
    \begin{align*}
        \|N_{\eps,\lambda}(\phi_1)\|_{L^2(\Gamma)}
        &\le C\big(\|\phi_1\|_{L^\infty(\Gamma)}+\|\cT_\eps\phi_1\|_{L^\infty(\Gamma)}\big)\big(\|\phi_1\|_{L^2(\Gamma)}+\|\cT_\eps\phi_1\|_{L^2(\Gamma)}\big)\\
        &\le C(\eps^{-1/2}+1)\|\phi_1\|_{H^2_\eps(\Gamma)}(1+\eps)\|\phi_1\|_{H^2_\eps(\Gamma)}\\
        &\le C\eps^{-1/2}\|\phi_1\|_{H^2_\eps(\Gamma)}^2.
    \end{align*}
    This proves \eqref{eq:N_eps-estimate}. 
    Estimate \eqref{eq:N_eps-N_eps-estimate} follows similarly. 
\end{proof}

Next, we estimate the projections of the linear and nonlinear terms onto the finite--dimensional space. 

\begin{prop}\label{prop:Z-L-Z-N}
    For $\lambda\in\Lambda$ and  sufficiently small $\eps$, it holds that
    \begin{align}\label{eq:Z-L-pairing}
        |\langle Z_{i,\eps,w,s_0}, \cL_{\eps,\lambda} \phi\rangle|\le C\eps^{3/2}\|\phi\|_{H^2_\eps(\Gamma)}\quad \text{for }\phi\in \cK_{\eps,w,s_0}^\perp.
    \end{align}
    Moreover, for $\phi\in\cK_{\eps,w,s_0}^\perp$ with $\|\phi\|_{H^2_\eps(\Gamma)}\le C\eps^{1/2}$, 
    we have
    \begin{align}\label{eq:Z-N-pairing}
        |\langle Z_{i,\eps,w,s_0}, N_{\eps,\lambda}(\phi)\rangle|\le C\|\phi\|_{H^2_\eps(\Gamma)}^2.
    \end{align}
\end{prop}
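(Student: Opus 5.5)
For \eqref{eq:Z-L-pairing} I would move the operator onto $Z_i$ and use $\phi\perp Z_i$. Write $\cL_{\eps,\lambda}=\cL_{\mathrm{loc},\eps}+f_v(u_\eps,v_\eps)\cT_\eps$ with $\cL_{\mathrm{loc},\eps}\phi=\eps^2\phi''+f_u(u_\eps,v_\eps)\phi$. Since $\cL_{\mathrm{loc},\eps}$ is self-adjoint on $L^2(\Gamma)$,
\begin{align*}
\langle Z_i,\cL_{\mathrm{loc},\eps}\phi\rangle=\langle \cL_{\mathrm{loc},\eps}Z_i,\phi\rangle .
\end{align*}
Take $i=1$ and $s=s_1+\eps\xi$ with $|\xi|<\delta/(2\eps)$, where $\chi_1\equiv1$. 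Differentiating \eqref{eq:auxiliary-problem} gives
\begin{align*}
q_\eps'''+f_u\Bigl(q_\eps,\bar v_\eps+\tfrac{\eps}{D}\cP q_\eps\Bigr)q_\eps'=-\tfrac{\eps}{D}f_v(\cdots)(\cP q_\eps)' .
\end{align*}
Hence
\begin{align*}
\cL_{\mathrm{loc},\eps}Z_1=\bigl[f_u(q_\eps,v_\eps)-f_u(q_\eps,\bar v_\eps+\tfrac{\eps}{D}\cP q_\eps)\bigr]q_\eps'-\tfrac{\eps}{D}f_v(\cP q_\eps)' .
\end{align*}
By Proposition~\ref{prop:v-expansion1} the bracket is $O(\eps^2)$, so the first term is $O(\eps^2 q_\eps')$. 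In the transition annulus $\delta/2\le|s-s_1|\le\delta$, the commutator terms $\eps^2(\chi_1''q_\eps'+2\eps^{-1}\chi_1'q_\eps'')$ and the other contributions are $O(\eps^2)$ by \eqref{eq:q_eps-derivative-estimate}, since $q_\eps^{(j)}(\xi)=O(\eps|\xi|^{-j-1})=O(\eps^{j+2})$ there.

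The dangerous term is $-\tfrac{\eps}{D}f_v(\cP q_\eps)'$, which is only $O(\eps)$ pointwise. I would not pair it with $\phi$ directly. Instead I would combine it with the nonlocal piece $\langle Z_1,f_v\cT_\eps\phi\rangle$, because $\cT_\eps^*$ applied to $f_vZ_1$ is, on the inner scale, $\tfrac{\eps}{D}\cP$ applied to $f_vq_\eps'$, up to smoothing remainders from $\tilde G_\Gamma$. Since $\cP$ commutes with derivatives, the leading parts should nearly cancel: $f_v\,\cP q_\eps'$ against $\cP(f_vq_\eps')$. The difference is a commutator $[\cP,f_v]q_\eps'$. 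Here $f_v$ varies on the inner scale, but $q_\eps'$ decays like $|\xi|^{-2}$ by Proposition~\ref{prop:q_eps}, so I would estimate this commutator in $L^2(\bR)$ directly. Whatever survives is bounded crudely by $\tfrac{\eps}{D}$ times an $L^2$ function of $\xi$, which pairs against $\phi$ with norm $\eps\cdot\eps^{1/2}\|\phi\|_{L^2}$ after the change of variables. Together with the $O(\eps^2)$ pieces, this gives $C\eps^{3/2}\|\phi\|_{H^2_\eps(\Gamma)}$.

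I expect this cancellation step to be the main obstacle. If the exact cancellation fails, I would fall back on the following bound. Using $\|\cT_\eps\phi\|_{L^2}\le C\eps\|\phi\|_{H^2_\eps}$ from Lemma~\ref{lem:L-invertible} and $\|Z_1\|_{L^2}\le C\eps^{1/2}$, we get
\begin{align*}
|\langle Z_1,f_v\cT_\eps\phi\rangle|\le C\eps^{3/2}\|\phi\|_{H^2_\eps(\Gamma)} .
\end{align*}
Then it remains to treat $\langle \eps(\cP q_\eps)'(\cdot/\eps)f_v,\phi\rangle$. I would absorb it by moving the derivative onto $\phi$ and using the structure $(\cP q_\eps)'=\cP q_\eps'$. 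Alternatively, I would use that its $L^2(\Gamma)$ norm is $\eps\cdot\eps^{1/2}\|(\cP q_\eps)'\|_{L^2(\bR)}=O(\eps^{3/2})$. This holds because $(\cP q_\eps)'=O(|\xi|^{-2})$ by \eqref{eq:cP-q_eps-expansion}, so it lies in $L^2(\bR)$. This second route actually suffices by itself and avoids the cancellation: Cauchy--Schwarz gives $C\eps^{3/2}\|\phi\|_{L^2}$. So the plan is: self-adjointness, the differentiated auxiliary equation, the $L^2$ scaling $\|g(\cdot/\eps)\|_{L^2(\Gamma)}=\eps^{1/2}\|g\|_{L^2(\bR)}$, and \eqref{eq:conv-estimate}.

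For \eqref{eq:Z-N-pairing} I would use Taylor's theorem pointwise:
\begin{align*}
|N_{\eps,\lambda}(\phi)|\le C\bigl(\phi^2+(\cT_\eps\phi)^2\bigr),
\end{align*}
valid because $\|\phi\|_{L^\infty}\le C$. Since $\|Z_i\|_{L^\infty}\le C$, this yields
\begin{align*}
|\langle Z_i,N_{\eps,\lambda}(\phi)\rangle|\le C\bigl(\|\phi\|_{L^2}^2+\|\cT_\eps\phi\|_{L^2}^2\bigr)\le C\|\phi\|_{H^2_\eps(\Gamma)}^2
\end{align*}
by \eqref{eq:conv-estimate}.
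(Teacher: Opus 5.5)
Your proposal is correct. Once the cancellation detour is dropped (as you note yourself, it is unnecessary), it is the paper's argument for \eqref{eq:Z-L-pairing}: integrate by parts onto $Z_i$, use the differentiated auxiliary equation so that the only sizeable piece of $\eps^2 Z_i''+f_u(u_\eps,v_\eps)Z_i$ is $\frac{\eps}{D}f_v(\cP q_\eps)'$, whose $L^2(\Gamma)$-norm is $O(\eps^{3/2})$ by scaling, and bound the nonlocal piece by $\|Z_i\|_{L^2}\|f_v\cT_\eps\phi\|_{L^2}\le C\eps^{1/2}\cdot\eps\|\phi\|_{H^2_\eps(\Gamma)}$. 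For \eqref{eq:Z-N-pairing} you pair $\|Z_i\|_{L^\infty}\le C$ with the pointwise quadratic Taylor bound in $L^1$, whereas the paper combines Lemma~\ref{lem:N_eps-estimate} with $\|Z_i\|_{L^2}\le C\eps^{1/2}$; this is an equally valid minor variant that avoids the offsetting $\eps^{-1/2}$ and $\eps^{1/2}$ factors.
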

\begin{proof}
    We first prove \eqref{eq:Z-L-pairing}. 
    Integration by parts gives
    \begin{align}\label{eq:Z-L-decomposition}
    \begin{aligned}
        |\langle Z_{i,\eps,w,s_0}, \cL_{\eps,\lambda} \phi\rangle|
        &\le |\langle \eps^2Z_{i,\eps,w,s_0}''+f_u(u_\eps,v_\eps)Z_{i,\eps,w,s_0},\phi\rangle|
        +\big|\big\langle Z_{i,\eps,w,s_0},f_v(u_\eps,v_\eps)\cT_\eps\phi\big\rangle\big|\\
        &\le \|\eps^2Z_{i,\eps,w,s_0}''+f_u(u_\eps,v_\eps)Z_{i,\eps,w,s_0}\|_{L^2(\Gamma)}\|\phi\|_{L^2(\Gamma)}\\
        &\qquad  +\|Z_{i,\eps,w,s_0}\|_{L^2(\Gamma)}\big\|f_v(u_\eps,v_\eps)\cT_\eps\phi\big\|_{L^2(\Gamma)}.
    \end{aligned}
    \end{align}
    
    We estimate the first term. 
    Differentiating \eqref{eq:auxiliary-problem} with respect to $\xi$ yields
    \begin{align*}
        q_\eps'''+f_u\big(q_\eps,\bar v_\eps+\frac{\eps}{D}\cP q_\eps\big)q_\eps'=
        -f_v\big(q_\eps,\bar v_\eps+\frac{\eps}{D}\cP q_\eps\big)\frac{\eps}{D}(\cP q_\eps)'.
    \end{align*}
    
    Using this identity, we obtain for $i=1$, for $\sigma=s_1+\eps\eta\in[s_1-\delta,s_1+\delta]$, 
    \begin{align*}
        &\eps^2Z_{1,\eps,w,s_0}(\sigma)''+f_u(u_\eps(\sigma),v_\eps(\sigma))Z_{1,\eps,w,s_0}(\sigma)\\
        &=\chi_1(\sigma)\bigg(q_\eps'''(\eta)+f_u\big(q_\eps(\eta),\bar v_\eps+\frac{\eps}{D}\cP q_\eps(\eta)\big)q_\eps'(\eta)\bigg)
        \\
        &\quad +\chi_1(\sigma)\bigg(f_u\big(u_\eps(\sigma),v_\eps(\sigma)\big)-f_u\big(q_\eps(\eta),\bar v_\eps
        +\frac{\eps}{D}\cP q_\eps(\eta)\big)\bigg)q_\eps'(\eta)
        +2\eps\chi_1'(\sigma)q_\eps''(\eta)+\eps^2\chi_1''(\sigma)q_\eps'(\eta)\\
        &\overset{Prop.~\ref{prop:u-expansion},\ref{prop:v-expansion1}}=
        -\chi_1(\sigma)f_v\big(q_\eps(\eta),\bar v_\eps+\frac{\eps}{D}\cP q_\eps(\eta)\big)\frac{\eps}{D}(\cP q_\eps)'(\eta) +O(\eps^2)q_\eps'(\eta)
        +2\eps\chi_1'(\sigma)q_\eps''(\eta)+\eps^2\chi_1''(\sigma)q_\eps'(\eta)
    \end{align*}
    Together with $d\sigma=\eps\,d\eta$, this gives
    \begin{align}\label{eq:Z-L-1st-term}
    \begin{aligned}
        &\|\eps^2Z_{1,\eps,w,s_0}''+f_u(u_\eps,v_\eps)Z_{1,\eps,w,s_0}\|_{L^2(\Gamma)}\\
        &\le C\eps^{3/2}\|(\cP q_\eps)'\|_{L^2(\bR)}+C\eps^{5/2}\|q_\eps'\|_{L^2(\bR)}+C\eps^{3/2}\|q_\eps''\|_{L^2(\bR)}
        +C\eps^{5/2}\|q_\eps'\|_{L^2(\bR)}=O(\eps^{3/2}).
    \end{aligned}
    \end{align}
    We next estimate the second term in \eqref{eq:Z-L-decomposition}. By \eqref{eq:Z-est} and \eqref{eq:conv-estimate}, 
    \begin{align}\label{eq:Z-L-2nd-term}
        \|Z_{i,\eps,w,s_0}\|_{L^2(\Gamma)}\big\|f_v(u_\eps,v_\eps)\cT_\eps\phi\big\|_{L^2(\Gamma)}
        &\le C\eps^{1/2}\cdot\eps\|\phi\|_{H^2_\eps(\Gamma)}.
    \end{align}
    Combining \eqref{eq:Z-L-1st-term} and \eqref{eq:Z-L-2nd-term} yields \eqref{eq:Z-L-pairing} for $i=1$. The case $i=2$ is analogous. 

    Estimate \eqref{eq:Z-N-pairing} follows from
    Lemma~\ref{lem:N_eps-estimate} and \eqref{eq:Z-est} by the Cauchy–Schwarz inequality. 
    
\end{proof}

We define
\begin{align*}
        R_{1,*}(w,s_0)&:=\frac{\Delta h(v_*)}{D}\big(-\partial_\sigma H_\Gamma(s_1,s_1)+\partial_\sigma G_\Gamma(s_1,s_2)\big),\\
        R_{2,*}(w,s_0)&:=\frac{\Delta h(v_*)}{D}\big(-\partial_\sigma G_\Gamma(s_2,s_1)+\partial_\sigma H_\Gamma(s_2,s_2)\big).
\end{align*}

\begin{lem}\label{lem:Z-F-estimate}
    For  sufficiently small $\eps$, uniformly for $\lambda\in\Lambda$, we have
    \begin{align*}
        \langle Z_{i,\eps,w,s_0},F_{\eps,\mu}(u_\eps(\cdot;w,s_0))\rangle=\eps^3(\mu+R_{i,*})J'(v_*)+O(\eps^{4})
    \end{align*}
\end{lem}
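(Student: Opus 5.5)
We treat $i=1$; the case $i=2$ is identical after the reflection $\xi\mapsto-\xi$ built into $Z_2$.

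\textbf{Splitting the integral.} Since $Z_{1,\eps,w,s_0}$ is supported in $|s-s_1|<\delta$, we split the integral into the inner part $|s-s_1|<\delta/2$ and the annulus $\delta/2\le|s-s_1|<\delta$.

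On the annulus, Proposition~\ref{prop:F_eps-L^2-estimate} gives $|F_{\eps,\mu}|\le C\eps^2$. By \eqref{eq:q_eps-derivative-estimate} with $j=1$, $|q_\eps'(\xi)|\le C(\eps|\xi|^{-2}+e^{-\gamma_U|\xi|})\le C\eps^3$ for $|\xi|\ge\delta/(2\eps)$. So this part contributes at most $C\eps^5$.

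\textbf{Inner part.} In the inner region we substitute $s=s_1+\eps\xi$, so $ds=\eps\,d\xi$ and $\chi_1=1$, and apply Proposition~\ref{prop:F_eps-expansion}. This gives
\begin{align*}
\langle Z_1,F\rangle=\eps^3(\mu+R_{1,\eps})\int_{|\xi|<\delta/(2\eps)}q_\eps'(\xi)\,f_v\Big(q_\eps,\bar v_\eps+\tfrac{\eps}{D}\cP q_\eps\Big)d\xi+O\Big(\eps^4\int_\bR|q_\eps'|(1+|\xi|)\,d\xi\Big)+O(\eps^5).
\end{align*}
The error integral needs $\int|q_\eps'|(1+|\xi|)\,d\xi$ to be finite. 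This is exactly where the algebraic decay $q_\eps'=O(|\xi|^{-2})$ from Proposition~\ref{prop:q_eps} is barely insufficient, since $|\xi|^{-1}$ is not integrable. This is the main obstacle, and there are two ways to handle it.

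\begin{itemize}
\item Use the refined estimate \eqref{eq:q_eps-derivative-estimate}. It gives $|q_\eps'|(1+|\xi|)\le C(\eps|\xi|^{-2}+(1+|\xi|)e^{-\gamma_U|\xi|})$, whose integral over $|\xi|<\delta/(2\eps)$ is $O(1)$. The error term is therefore $O(\eps^4)$.
\item If one only trusts the coarse decay, the integral is $O(\log(1/\eps))$. One then uses that the $O(\eps^3|\xi|)$ term in Proposition~\ref{prop:F_eps-expansion} comes from $\eps^2\cdot O(\eps|\xi|)$ multiplied by a bounded factor. So it suffices that $q_\eps'|\xi|$ is integrable, which the refined bound gives.
\end{itemize}

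\textbf{Identifying the leading integral.} It remains to show
\begin{align*}
\int_{|\xi|<\delta/(2\eps)}q_\eps'\,f_v\Big(q_\eps,\bar v_\eps+\tfrac{\eps}{D}\cP q_\eps\Big)\,d\xi=J'(v_*)+O(\eps),\qquad R_{1,\eps}=R_{1,*}+O(\eps).
\end{align*}

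For the second estimate, note that $R_{1,\eps}$ depends on $\eps$ only through $\Delta h(\bar v_\eps)$. It is Lipschitz in $\bar v$, and $|\bar v_\eps-v_*|=O(\eps)$ by \eqref{eq:q_eps-convergence}. Also $R_{1,*}$ is bounded uniformly on $\Lambda$, because $s_1,s_2$ stay separated there.

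For the first estimate, we carry out four steps.
\begin{enumerate}
\item Replace $f_v(q_\eps,\bar v_\eps+\frac{\eps}{D}\cP q_\eps)$ by $f_v(q_\eps,v_*)$. The error is $O(\eps)$ times $\|q_\eps'\|_{L^1}$, and $\|q_\eps'\|_{L^1}$ is bounded because $q_\eps$ is monotone-like with bounded total variation; concretely, $\int|q_\eps'|$ is bounded by the decay estimate.
\item Extend the integral to all of $\bR$, at cost $O(\eps^2)$ by the tail bound on $q_\eps'$.
\item Change variables $u=q_\eps(\xi)$ in $\int_\bR q_\eps' f_v(q_\eps,v_*)\,d\xi$. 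This is legitimate because it is an exact derivative: with $\Phi(u):=\int_{h_-(v_*)}^u f_v(t,v_*)\,dt$, we get $\int_\bR q_\eps' f_v(q_\eps,v_*)\,d\xi=\Phi(h_+(\bar v_\eps))-\Phi(h_-(\bar v_\eps))$, and no monotonicity is needed.
\item This quantity equals $\int_{h_-(v_*)}^{h_+(v_*)}f_v(u,v_*)\,du+O(\eps)$. Using $f(h_\pm(v_*),v_*)=0$, differentiating $J$ gives $J'(v_*)=\int_{h_-}^{h_+}f_v\,du$, exactly as in the computation $\langle U',\tilde b\rangle=J'(v_*)$ in Section~\ref{sec:auxiliary-problem}.
\end{enumerate}

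Collecting terms yields $\eps^3(\mu+R_{1,*})J'(v_*)+O(\eps^4)$, uniformly in $\lambda\in\Lambda$ since $|\mu|<M_*$.
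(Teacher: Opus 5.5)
Your proof is correct and follows the same argument as the paper. You split $\langle Z_1,F\rangle$ into the region $|\eta|<\delta/(2\eps)$, where Proposition~\ref{prop:F_eps-expansion} applies, and the cutoff annulus, which you bound via \eqref{eq:F_eps-L^infty-estimate}; you then control the weighted error with the refined decay \eqref{eq:q_eps-derivative-estimate} and identify the leading integral as $\int_{h_-}^{h_+}f_v\,du=J'$ through the exact-derivative structure. Going straight to $v_*$ instead of through $J'(\bar v_\eps)$ makes no difference. One small slip: the weighted bound should read $|q_\eps'|(1+|\xi|)\le C(\eps|\xi|^{-1}+(1+|\xi|)e^{-\gamma_U|\xi|})$, not $\eps|\xi|^{-2}$. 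Its integral over $|\xi|<\delta/(2\eps)$ is $O(1+\eps\log(1/\eps))=O(1)$, so your conclusion stands. Your second bullet is redundant, since it also relies on the refined bound.
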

\begin{proof}
    We prove the case $i=1$; the case $i=2$ is analogous.
    \begin{align*}
        &\langle Z_{1,\eps,w,s_0},F_{\eps,\mu}(u_\eps(\cdot;w,s_0))\rangle\\
        &=\int_\Gamma\chi_1(\sigma)q_\eps'\big((\sigma-s_1)/\eps\big)F_{\eps,\mu}(u_\eps)(\sigma)d\sigma\\
        &=\eps\int_{|\eta|<\delta/(2\eps)}q_\eps'(\eta)F_{\eps,\mu}(u_\eps)(s_1+\eps\eta)d\eta
        +\eps\int_{\delta/(2\eps)<|\eta|<\delta/\eps}\chi(\eps\eta)q_\eps'(\eta)F_{\eps,\mu}(u_\eps)(s_1+\eps\eta)d\eta
    \end{align*}
    \begin{align*}
        \text{(1st term)}&\overset{Prop.~\ref{prop:F_eps-expansion}}
        =\eps^3(\mu+R_{1,\eps})\int_{|\eta|<\delta/(2\eps)} q_\eps'f_v\big(q_\eps,\bar v_\eps+\frac\eps D\cP q_\eps\big)d\eta\\
        &\qquad+\int_{|\eta|<\delta/(2\eps)}q_\eps'(\eta)O(\eps^{4}(1+|\eta|))d\eta\\
        &=\eps^3(\mu+R_{1,\eps})\int_\bR q_\eps'f_v(q_\eps,\bar v_\eps)d\eta+O(\eps^{4})\\
        &=\eps^3(\mu+R_{1,*})J'(\bar v_\eps)+O(\eps^{4}),\\
        &=\eps^3(\mu+R_{1,*})J'(v_*)+O(\eps^{4}),\\
        |\text{(2nd term)}|&\le \eps\|F_{\eps,\mu}(u_\eps)\|_{L^\infty(\Gamma)}\int_{\delta/(2\eps)<|\eta|<\delta/\eps}|q_\eps'(\eta)|d\eta\\
        &\overset{\eqref{eq:F_eps-L^infty-estimate}}\le C\eps^3\int_{\delta/(2\eps)<|\eta|<\delta/\eps}|q_\eps'(\eta)|d\eta=O(\eps^{5}).
    \end{align*}
    Consequently, 
    \begin{align*}
        \langle Z_{i,\eps,w,s_0},F_{\eps,\mu}(u_\eps(\cdot;w,s_0))\rangle=\eps^3(\mu+R_{i,*})J'(v_*)+O(\eps^{4}).
    \end{align*}
    
\end{proof}

\subsection{Finite-dimensional problem}
Applying the projections, we decompose \eqref{eq:F_eps=0} as
\begin{align}\label{eq:F_eps=0-divided}
    \left\{
    \begin{aligned}
        \Pi_{\eps,w,s_0}^\perp F_{\eps,\mu}(u_\eps(\cdot;w,s_0)+\phi)&=0,\\
        \Pi_{\eps,w,s_0} F_{\eps,\mu}(u_\eps(\cdot;w,s_0)+\phi)&=0.
    \end{aligned}
    \right.
\end{align}

The following proposition establishes the existence of $\phi$ satisfying the first equation of \eqref{eq:F_eps=0-divided}.

\begin{prop}\label{prop:phi_eps-existence}
    For every $\lambda\in\Lambda$ and  sufficiently small $\eps$, 
    there exists $\phi=\phi_{\eps,\lambda}\in\cK_{\eps,w,s_0}^\perp$ satisfying the first equation in \eqref{eq:F_eps=0-divided} and
    \begin{align*}
        \|\phi_{\eps,\lambda}\|_{H^2_\eps(\Gamma)}=O(\eps^2).
    \end{align*}
    Moreover, $\phi_{\eps,\lambda}$ depends continuously on $\lambda$. 
\end{prop}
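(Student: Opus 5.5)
We recast the first equation of \eqref{eq:F_eps=0-divided} as a fixed-point problem on $\cK_{\eps,w,s_0}^\perp$. Writing $\cM_{\eps,\lambda}:=\Pi_{\eps,w,s_0}^\perp\cL_{\eps,\lambda}:\cK_{\eps,w,s_0}^\perp\to\cC_{\eps,w,s_0}^\perp$, which by Lemma~\ref{lem:L-invertible} has an inverse bounded uniformly in $\lambda\in\Lambda$ and small $\eps$, the equation reads
\begin{align*}
    \phi=\cG_{\eps,\lambda}(\phi):=-\cM_{\eps,\lambda}^{-1}\Pi_{\eps,w,s_0}^\perp\Big(F_{\eps,\mu}(u_\eps(\cdot;w,s_0))+N_{\eps,\lambda}(\phi)\Big).
\end{align*}
We look for a fixed point in the closed ball $B:=\{\phi\in\cK_{\eps,w,s_0}^\perp:\ \|\phi\|_{H^2_\eps(\Gamma)}\le K\eps^2\}$, where $K$ is a large constant independent of $\eps$ and $\lambda$. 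We will use that $\Pi^\perp_{\eps,w,s_0}$ is an $L^2$-orthogonal projection, so it has norm at most one.

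\textbf{Self-map.} By \eqref{eq:F_eps-L^2-estimate}, $\|F_{\eps,\mu}(u_\eps)\|_{L^2(\Gamma)}\le C_0\eps^2$ uniformly on $\Lambda$. For $\phi\in B$ we have $\|\phi\|_{H^2_\eps}\le K\eps^2\le\eps^{1/2}$ once $\eps$ is small, so \eqref{eq:N_eps-estimate} gives $\|N_{\eps,\lambda}(\phi)\|_{L^2}\le C\eps^{-1/2}K^2\eps^4=CK^2\eps^{7/2}$. Therefore
\begin{align*}
    \|\cG_{\eps,\lambda}(\phi)\|_{H^2_\eps(\Gamma)}\le C_1\big(C_0\eps^2+CK^2\eps^{7/2}\big)\le K\eps^2,
\end{align*}
provided we choose $K=2C_1C_0$ and then take $\eps$ small.

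\textbf{Contraction.} For $\phi_1,\phi_2\in B$, \eqref{eq:N_eps-N_eps-estimate} yields
\begin{align*}
    \|\cG_{\eps,\lambda}(\phi_1)-\cG_{\eps,\lambda}(\phi_2)\|_{H^2_\eps(\Gamma)}\le C\eps^{-1/2}\cdot 2K\eps^2\,\|\phi_1-\phi_2\|_{H^2_\eps(\Gamma)}\le\tfrac12\|\phi_1-\phi_2\|_{H^2_\eps(\Gamma)}
\end{align*}
for $\eps$ small. The Banach fixed-point theorem then gives a unique $\phi_{\eps,\lambda}\in B$, which satisfies the stated $O(\eps^2)$ bound.

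\textbf{Continuity in $\lambda$.} This is the main obstacle, because the space $\cK_{\eps,w,s_0}^\perp$ itself moves with $(w,s_0)$. We would transport everything to a fixed space by translation and dilation: change variables so that $s_1,s_2$ are sent to fixed reference points, or equivalently parametrize the moving spaces by the continuous family of projections $\Pi^\perp_{\eps,w,s_0}$. These projections depend continuously on $(w,s_0)$ in operator norm because $Z_{i,\eps,w,s_0}$ depends continuously on $s_i$ in $H^2_\eps$ for fixed $\eps$.

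Equivalently, we would solve the augmented system
\begin{align*}
    \cL_{\eps,\lambda}\phi+F_{\eps,\mu}(u_\eps)+N_{\eps,\lambda}(\phi)=\textstyle\sum_i c_iZ_i,\qquad \langle\phi,Z_i\rangle=0,
\end{align*}
posed on the fixed space $H^2_\eps(\Gamma)\times\bR^2$. Uniform invertibility of this augmented linear operator follows from Lemma~\ref{lem:L-invertible}. The maps $\lambda\mapsto u_\eps,\ v_\eps,\ \cL_{\eps,\lambda},\ F_{\eps,\mu}(u_\eps)$ are continuous into the relevant spaces, since $f$ is smooth, $q_\eps$ is smooth with bounded derivatives, and $\cT_\eps$ does not depend on $\lambda$. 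Hence the contraction mapping is continuous in $\lambda$ with a contraction constant uniform in $\lambda$. A fixed point of a uniformly contractive, parameter-continuous family depends continuously on the parameter, which gives the continuity of $\phi_{\eps,\lambda}$ in $\lambda$.
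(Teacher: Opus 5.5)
Your proposal is correct and follows the paper's route: you recast the equation as a fixed point of $-(\Pi^\perp_{\eps,w,s_0}\cL_{\eps,\lambda})^{-1}\Pi^\perp_{\eps,w,s_0}\big(F_{\eps,\mu}(u_\eps)+N_{\eps,\lambda}(\phi)\big)$ on a ball of radius of order $\eps^2$ in $\cK_{\eps,w,s_0}^\perp$, get the self-map from \eqref{eq:F_eps-L^2-estimate} and \eqref{eq:N_eps-estimate}, and get the contraction from \eqref{eq:N_eps-N_eps-estimate}, all via the uniform inverse bound of Lemma~\ref{lem:L-invertible}. Your augmented-system argument on the fixed space $H^2_\eps(\Gamma)\times\bR^2$ for continuity in $\lambda$ is sound, and it fills in a step the paper only calls straightforward.
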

\begin{proof}
    The proof is standard. Rewrite the first equation in \eqref{eq:F_eps=0-divided} as
    \begin{align*}
        \phi=-(\Pi_{\eps,w,s_0}^\perp\cL_{\eps,\lambda})^{-1}\Pi^\perp_{\eps,w,s_0}(F_{\eps,\mu}(u_\eps(\cdot;w,s_0))+N_{\eps,\lambda}(\phi))=:T_\eps(\phi).
    \end{align*}
    This reduces the problem to finding a fixed point of $T_\eps$. By Lemma~\ref{lem:N_eps-estimate}, for a sufficiently large constant $R>0$ and sufficiently small $\eps$, the map $T_\eps$ sends the ball of radius $R\eps^2$ in $\cK_{\eps,w,s_0}^\perp$ into itself. Moreover, \eqref{eq:F_eps-L^2-estimate} and Lemma~\ref{lem:N_eps-estimate} imply that this map is a contraction for sufficiently small $\eps$. The contraction mapping principle yields the desired conclusion. Proving the continuous dependence of $\phi_{\eps,\lambda}$ on $\lambda$ is straightforward.
\end{proof}

Thus the infinite-dimensional part of the problem has been solved; it remains to determine the three parameters $\mu,w,s_0$ that satisfy the second equation in \eqref{eq:F_eps=0-divided} and the mass conservation law \eqref{eq:mass-conservation}. 
For $i=1,2$, define
\begin{align*}
\begin{aligned}
    g_{i,\eps}(\lambda)&:=\langle Z_{i,\eps,w,s_0},F_{\eps,\mu}(u_\eps(\cdot;w,s_0)+\phi_{\eps,\lambda})\rangle,\\
    G_\eps(\lambda)
    &:=
    \begin{pmatrix}
    |\Omega|(\bar v_\eps+\eps^2\mu)+\int_\Gamma\big(u_\eps(\sigma;w,s_0)+\phi_{\eps,\lambda}(\sigma)\big)d\sigma-M\\
    \eps^{-3}g_{1,\eps}(\lambda)\\
    \eps^{-3}g_{2,\eps}(\lambda)
    \end{pmatrix}.
\end{aligned}
\end{align*}
We then seek a solution $\lambda\in\Lambda$ of
$G_\eps(\lambda)=0$.
We also define
\begin{align*}
    \quad g_{i,*}(\lambda):=J'(v_*)\big(\mu+R_{i,*}(w,s_0)\big),
    \quad
    G_*(\lambda)
    &:=
    \begin{pmatrix}
    2\Delta h(v_*)(w-w_*)\\
    g_{1,*}(\lambda)\\
    g_{2,*}(\lambda)
    \end{pmatrix}.
\end{align*}
The following uniform convergence holds. 

\begin{lem}\label{lem:G_eps-G_*-convergence}
    For  sufficiently small $\eps$, 
    \begin{align*}
        \sup_{\lambda\in \Lambda}|G_\eps(\lambda)-G_*(\lambda)|=O(\eps^{1/2}).
    \end{align*}
\end{lem}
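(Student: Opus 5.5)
We treat the three components of $G_\eps-G_*$ one at a time, uniformly in $\lambda\in\Lambda$, using Proposition~\ref{prop:phi_eps-existence} to control the correction $\phi_{\eps,\lambda}$.

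\textbf{First component (mass).} We compute $\int_\Gamma u_\eps(\sigma;w,s_0)\,d\sigma$ directly. Away from the layers, $u_\eps=u_0$. On the support of $\chi_1$ we have $|u_\eps-u_0|\le |q_\eps((s-s_1)/\eps)-u_0(s)|$. By \eqref{eq:q_eps-derivative-estimate} and \eqref{eq:q_eps-convergence}, and since $q_\eps(0)=U(0)$ with $q_\eps-h_\pm(\bar v_\eps)=O(\eps|\xi|^{-1}+e^{-\gamma_U|\xi|})$, this difference has $L^1$ norm on $|s-s_1|<\delta$ of order
\[
\eps\int_{|\xi|<\delta/\eps}\min\bigl(1,\eps|\xi|^{-1}+e^{-\gamma_U|\xi|}\bigr)\,d\xi=O(\eps+\eps^2|\log\eps|).
\]
The same bound holds near $s_2$. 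Hence
\[
\int_\Gamma u_\eps=2w\,h_+(\bar v_\eps)+(L-2w)\,h_-(\bar v_\eps)+O(\eps).
\]
By Cauchy--Schwarz, $\int_\Gamma|\phi_{\eps,\lambda}|\le C\|\phi_{\eps,\lambda}\|_{L^2}=O(\eps^2)$. Also $|\bar v_\eps-v_*|=O(\eps)$ and $h_\pm$ is smooth, so the first component equals
\[
|\Omega|v_*+2w\,\Delta h(v_*)+L\,h_-(v_*)-M+O(\eps)=2\Delta h(v_*)(w-w_*)+O(\eps),
\]
by the definition of $w_*$.

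\textbf{Second and third components.} We expand
\[
g_{i,\eps}(\lambda)=\langle Z_i,F_{\eps,\mu}(u_\eps)\rangle+\langle Z_i,\cL_{\eps,\lambda}\phi_{\eps,\lambda}\rangle+\langle Z_i,N_{\eps,\lambda}(\phi_{\eps,\lambda})\rangle.
\]
Lemma~\ref{lem:Z-F-estimate} gives the first term as $\eps^3 g_{i,*}(\lambda)+O(\eps^4)$. Since $\|\phi_{\eps,\lambda}\|_{H^2_\eps}\le C\eps^2\le C\eps^{1/2}$ and $\phi_{\eps,\lambda}\in\cK^\perp$, Proposition~\ref{prop:Z-L-Z-N} bounds the second term by $C\eps^{3/2}\cdot\eps^2=O(\eps^{7/2})$ and the third term by $C\eps^4$. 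Dividing by $\eps^3$ gives
\[
\eps^{-3}g_{i,\eps}(\lambda)-g_{i,*}(\lambda)=O(\eps^{1/2}).
\]
This term is the one limiting the rate to $\eps^{1/2}$. All constants in the cited results are uniform over $\Lambda$, so the supremum bound follows.

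\textbf{Main obstacle.} The delicate point is the mass integral in the first component. The algebraic tail $\eps/|\xi|$ of $q_\eps$ could produce logarithmic contributions. After rescaling, however, the tail contributes only $O(\eps^2|\log\eps|)$, while the exponentially localized core near $\xi=0$ contributes $O(\eps)$. Both are well within $O(\eps^{1/2})$. One must also check that the cutoff region $\delta/2\le|s-s_i|\le\delta$ contributes at most $O(\eps^2)$, and this follows from Proposition~\ref{prop:u-expansion}.
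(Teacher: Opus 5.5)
Your proposal is correct and follows the paper's proof essentially step for step. For the mass component you reduce to $\int_\Gamma u_0$ up to $O(\eps)$, making explicit the $L^1$ bound on $u_\eps-u_0$ that the paper leaves implicit. For the projected components you split into the $F_{\eps,\mu}(u_\eps)$, $\cL_{\eps,\lambda}\phi_{\eps,\lambda}$ and $N_{\eps,\lambda}(\phi_{\eps,\lambda})$ pairings and bound them via Lemma~\ref{lem:Z-F-estimate}, Proposition~\ref{prop:Z-L-Z-N} and $\|\phi_{\eps,\lambda}\|_{H^2_\eps(\Gamma)}=O(\eps^2)$, exactly as the paper does, with the linear term giving the $\eps^{1/2}$ rate.
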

\begin{proof}
    We estimate each component as follows: 
    \begin{align*}
        &|\Omega|(\bar v_\eps+\eps^2\mu)+\int_\Gamma\big(u_\eps(\sigma;w,s_0)+\phi_{\eps,\lambda}(\sigma)\big)d\sigma-M\\
        &=
        |\Omega|v_*+\int_\Gamma u_0(\sigma;w,s_0,v_*)d\sigma-M+O(\eps)\\
        &=2\Delta h(v_*)(w-w_*)+O(\eps),
    \end{align*}
    \begin{align*}
        &|\eps^{-3}g_{i,\eps}(\lambda)-g_{i,*}(\lambda)|\\
        &\le \big|\eps^{-3}\langle Z_{i,\eps,w,s_0},F_{\eps,\mu}(u_\eps(\cdot;w,s_0))\rangle-g_{i,*}(\lambda)\big|
        +\eps^{-3}\big|\langle Z_{i,\eps,w,s_0},\cL_{\eps,\lambda} \phi_{\eps,\lambda}+N_{\eps,\lambda}(\phi_{\eps,\lambda})\rangle\big|\\
        &\le C\eps+C\eps^{-3}\big(\eps^{3/2}\|\phi_{\eps,\lambda}\|_{H^2_\eps(\Gamma)}+\|\phi_{\eps,\lambda}\|_{H^2_\eps(\Gamma)}^2\big)\\
        &\le C\eps^{1/2}.
    \end{align*}
\end{proof}

\begin{prop}\label{prop:lambda_eps-existence}
    Let $s_{0,*}=L/2$ be a nondegenerate critical point of $E(\cdot;w_*)$. 
    Set 
    \begin{align*}
    \mu_*:=-\frac12(R_{1,*}(w_*,s_{0,*})+R_{2,*}(w_*,s_{0,*})),\quad \lambda_*:=(\mu_*,w_*,s_{0,*}).
    \end{align*}
    Then, for  sufficiently small $\eps$, 
    there exists $\lambda=\lambda_\eps:=(\mu_\eps,w_\eps,s_{0,\eps})$ satisfying $G_\eps(\lambda)=0$ and
    $\lambda_\eps\to\lambda_*$ as $\eps\downarrow0$. 
\end{prop}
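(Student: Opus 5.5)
The plan is a degree argument. By Proposition~\ref{prop:phi_eps-existence}, $\phi_{\eps,\lambda}$ is only known to depend continuously on $\lambda$, so $G_\eps$ is continuous but not known to be differentiable, and the implicit function theorem is unavailable. Instead I will show that $G_*$ has a nondegenerate zero at $\lambda_*$ and transfer that zero to $G_\eps$ using the uniform closeness in Lemma~\ref{lem:G_eps-G_*-convergence}.

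\textbf{Step 1: $G_*(\lambda_*)=0$.} This reduces to relating $R_{1,*}-R_{2,*}$ to $E'$. The symmetry of $G_\Gamma$ gives $\partial_\sigma G_\Gamma(s_2,s_1)=\partial_s G_\Gamma(s_1,s_2)$. The symmetry of $H_\Gamma$ gives $\frac{d}{ds}H_\Gamma(s,s)=2\partial_\sigma H_\Gamma(s,s)$. Differentiating \eqref{eq:E-def} in $s$ with $s_1=s-w$ and $s_2=s+w$ then yields
\begin{align*}
E'(s_0;w)=2\big(\partial_sG_\Gamma+\partial_\sigma G_\Gamma\big)(s_1,s_2)-2\partial_\sigma H_\Gamma(s_1,s_1)-2\partial_\sigma H_\Gamma(s_2,s_2),
\end{align*}
and hence
\begin{align*}
R_{1,*}(w,s_0)-R_{2,*}(w,s_0)=\frac{\Delta h(v_*)}{2D}E'(s_0;w).
\end{align*}
At $\lambda_*$ the first component of $G_*$ vanishes because $w=w_*$. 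The difference $g_{1,*}-g_{2,*}$ vanishes because $E'(s_{0,*};w_*)=0$. The sum $g_{1,*}+g_{2,*}=J'(v_*)(2\mu+R_{1,*}+R_{2,*})$ vanishes by the choice of $\mu_*$. Taking $M_*>|\mu_*|$ ensures $\lambda_*\in\Lambda$.

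\textbf{Step 2: the Jacobian of $G_*$ at $\lambda_*$ is invertible.} I replace the last two components of $G_*$ by their sum and difference, which is an invertible linear change. In the variables $(w,\mu,s_0)$ the resulting Jacobian is block triangular:
\begin{itemize}
\item the first component depends only on $w$, with derivative $2\Delta h(v_*)\neq0$;
\item the sum has $\mu$-derivative $2J'(v_*)\neq0$;
\item the difference $J'(v_*)\frac{\Delta h(v_*)}{2D}E'(s_0;w)$ does not involve $\mu$, and its $s_0$-derivative at $\lambda_*$ is $J'(v_*)\frac{\Delta h(v_*)}{2D}E''(s_{0,*};w_*)\neq0$ by nondegeneracy.
\end{itemize}
The diagonal entries are nonzero, so the determinant is nonzero. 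Since $G_*$ is smooth, $\lambda_*$ is then an isolated zero: for every small $r>0$, $\lambda_*$ is the only zero in $\bar B_r(\lambda_*)$, and the Brouwer degree satisfies $\deg(G_*,B_r(\lambda_*),0)=\operatorname{sign}\det DG_*(\lambda_*)\neq0$.

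\textbf{Step 3: transfer to $G_\eps$.} I first check that $G_\eps$ is continuous on $\Lambda$. This follows from the continuity of $\lambda\mapsto\phi_{\eps,\lambda}$ in $H^2_\eps$, together with the smooth dependence of $u_\eps$, $Z_{i,\eps,w,s_0}$ and $F_{\eps,\mu}$ on $(\mu,w,s_0)$, and the fact that $\cT_\eps$ maps $H^2_\eps$ into $L^2$. Fix $r>0$ and set $m_r:=\min_{\partial B_r(\lambda_*)}|G_*|>0$. By Lemma~\ref{lem:G_eps-G_*-convergence}, $\sup_\Lambda|G_\eps-G_*|<m_r$ for small $\eps$. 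The linear homotopy $tG_\eps+(1-t)G_*$ therefore has no zero on $\partial B_r(\lambda_*)$, so $\deg(G_\eps,B_r(\lambda_*),0)\neq0$ and $G_\eps$ has a zero $\lambda_\eps\in B_r(\lambda_*)$. Because $r$ is arbitrary, a diagonal choice of $r\downarrow 0$ as $\eps\downarrow0$ gives $\lambda_\eps\to\lambda_*$.

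The main obstacle is Step 1, specifically getting the symmetry identities and signs right so that $R_{1,*}-R_{2,*}$ is exactly a nonzero multiple of $E'(s_0;w)$. Once that identity holds, Steps 2 and 3 are routine.
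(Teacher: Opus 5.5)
Your proposal is correct and takes essentially the same route as the paper. The identity $R_{1,*}-R_{2,*}=\frac{\Delta h(v_*)}{2D}E'(s_0;w)$ gives $G_*(\lambda_*)=0$, nondegeneracy of $E''(s_{0,*};w_*)$ gives $\det D_\lambda G_*(\lambda_*)\neq0$, and the uniform bound of Lemma~\ref{lem:G_eps-G_*-convergence} transfers the zero to $G_\eps$ by homotopy invariance of the Brouwer degree. The only difference is that you get $\lambda_\eps\to\lambda_*$ by shrinking the radius with $\eps$, whereas the paper fixes $r_*$ so that $\lambda_*$ is the unique zero of $G_*$ in $B_{r_*}(\lambda_*)$; you also spell out the symmetry computation and the continuity of $G_\eps$, which the paper leaves implicit.
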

\begin{proof}
    Since $\frac{\Delta h(v_*)}{2D}E'(s_0;w)=R_{1,*}(w,s_0)-R_{2,*}(w,s_0)$, we have $G_*(\lambda_*)=0$. 
    Moreover, 
    \begin{align*}
        \det D_\lambda G_*(\lambda_*)=\frac{(\Delta h(v_*)J'(v_*))^2}{D}E''(s_{0,*};w_*)\neq0.
    \end{align*}
    
    Thus, we can choose $r_*>0$ sufficiently small so that the open ball $B_{r_*}(\lambda_*)$ is contained in $\Lambda$, $\min_{\partial B_{r_*}(\lambda_*)}|G_*|>0$, and $\lambda_*$ is the only zero of $G_*$ in $B_{r_*}(\lambda_*)$. By Lemma~\ref{lem:G_eps-G_*-convergence}, the homotopy $tG_\eps+(1-t)G_*\quad(0\le t\le1)$ satisfies, on $\partial B_{r_*}(\lambda_*)$, 
    \begin{align*}
        |tG_\eps+(1-t)G_*|\ge |G_*|-t|G_*-G_\eps|\ge \frac12\min_{\partial B_{r_*}(\lambda_*)}|G_*|>0
    \end{align*}
    for  sufficiently small $\eps$. The proposition now follows from standard degree theory. 
\end{proof}

\begin{proof}[Completion of the proof of Theorem~\ref{thm:main}]
    Propositions~\ref{prop:phi_eps-existence} and~\ref{prop:lambda_eps-existence} imply that \eqref{eq:F_eps-def} admits a solution $U_\eps(s)=u_\eps(s;w_\eps,s_{0,\eps})+\phi_{\eps,\lambda_\eps}\in H^2(\Gamma)$ satisfying \eqref{eq:U_eps-conv}. Equation \eqref{eq:v-solved} and the second equation in \eqref{eq:F_eps-def} yield
    \eqref{eq:V_eps-rep}. By \eqref{eq:q_eps-convergence} and \eqref{eq:conv-estimate}, we obtain $\|V_\eps|_\Gamma-v_*\|_{L^\infty(\Gamma)}\le\|v_\eps-v_*\|_{L^\infty(\Gamma)}+\|\cT_\eps\phi_{\eps,\lambda_\eps}\|_{L^\infty(\Gamma)}=O(\eps)$ as $\eps\downarrow0$. 
    Since $V_\eps$ is harmonic in $\Omega$ and $\partial_n V_\eps=0$ on $\Gamma_j\ (j=1,\cdots, N)$, the maximum principle and Hopf's lemma give $\|V_\eps-v_*\|_{L^\infty(\bar\Omega)}=\|V_\eps|_\Gamma-v_*\|_{L^\infty(\Gamma)}$, which yields \eqref{eq:V_eps-conv}. Moreover, $V_\eps\in C^\infty(\Omega)$. Since $D\partial_nV_\eps=\eps^2U_\eps''\in L^2(\Gamma)$, by the standard regularity theory for the Neumann problem, we have $V_\eps|_\Gamma\in H^1(\Gamma)$. Thus we get $D\partial_n V_\eps=\eps^2U_\eps''=-f(U_\eps,V_\eps|_\Gamma)\in H^1(\Gamma)$, and iterating this argument, we get $V_\eps|_\Gamma\in H^k(\Gamma),\ U_\eps''\in H^k(\Gamma)$ for all $k\in\bN$ and thus $V_\eps|_\Gamma,U_\eps\in C^\infty(\Gamma)$ by Sobolev embedding. Then Schauder estimates yield $V_\eps\in C^\infty(\bar \Omega)$. 
\end{proof}

\section{Conclusion}
In this paper, we have rigorously proved the existence of stationary pulse solutions of \eqref{eq:model} near every nondegenerate critical point of the potential function $E$. 
In this model, singular integrals arising from the Green's function enter the construction of approximate solutions and lead to algebraic decay of the inner approximate profile $q_\eps$. 
Such singularities are expected to arise more generally in two-dimensional bulk–surface problems. The main contribution of this paper is to provide a rigorous treatment of these singularities and thereby suggest an approach to other problems with dynamic boundary conditions. 
The stability of the solutions constructed here has not been investigated in this paper. The ODEs derived formally by the author in \cite{watanabe2026} suggest that the stability of the pulse is determined by the signs of $E''(s_{0,*};w_*)$ and $J'(v_*)$. A rigorous proof of this prediction is an interesting question for future work.

\par\vspace{\baselineskip}
\noindent\textbf{Acknowledgements}\\
This work was supported by JST SPRING, Grant Number JPMJSP2119. 

\par\vspace{\baselineskip}
\noindent\textbf{Declaration of Competing Interest}\\
The author declares that they have no known competing financial interests or personal relationships that could have appeared to influence the work reported in this paper. 

\par\vspace{\baselineskip}
\noindent\textbf{Use of generative AI}\\
OpenAI's ChatGPT was used for English-language editing and as a discussion tool during the preparation of this manuscript. All mathematical statements, derivations, references, and conclusions were independently verified by the author.

\appendix
\section{Proof of Proposition~\ref{prop:v-expansion1}}\label{app:v-expansion1}
We prove only the case $s\sim s_1$. Write $s=s_1+\eps\xi$, so that $|\xi|\le\delta/\eps$ and $|s-s_1|\le\delta$. We decompose
\begin{align*}
    v_\eps(s;\lambda)&=\bar v_\eps+\eps^2\mu
    -\frac{\eps^2}{\pi D}\int_{|\sigma-s_1|<\delta/2}\log|s-\sigma|u_\eps''(\sigma)d\sigma
    +\frac{\eps^2}{D}\int_{|\sigma-s_1|<\delta/2}\tilde G_\Gamma(s,\sigma)u_\eps''(\sigma)d\sigma\\
    &\quad +\frac{\eps^2}{D}\int_{|\sigma-s_1|\ge\delta/2,\ |\sigma-s_2|\ge\delta}G_\Gamma(s,\sigma)u_\eps''(\sigma)d\sigma
    +\frac{\eps^2}{D}\int_{|\sigma-s_2|<\delta}G_\Gamma(s,\sigma)u_\eps''(\sigma)d\sigma\\
    &=:\bar v_\eps+\eps^2\mu+I_1(s)+I_2(s)+I_3(s)+I_4(s)
\end{align*}
and estimate each term separately. 

\textbf{Step 1: Estimate of $I_1$}\\
Set $\sigma=s_1+\eps\eta$. On this integration region, we have $u_\eps(\sigma)=q_\eps(\eta)$. 
The change of variables $t=\eps\eta$ gives
\begin{align*}
    &\bigg|I_1(s_1+\eps\xi)-\frac\eps D\cP q_\eps(\xi)\bigg|\\
    &=\bigg|\frac{\eps}{\pi D}\int_{|\eta|\ge\delta/(2\eps)}\log|\eps(\xi-\eta)|q_\eps''(\eta)d\eta\bigg|\le C\int_{|t|\ge\delta/2}\big|\log|\eps\xi-t|\big|\eps^4|t|^{-3}dt\le C\eps^4
\end{align*}
Consequently, 
\begin{align}\label{eq:STEP1-2}
    I_1(s)=\frac\eps D\cP q_\eps(\xi)+O(\eps^4)
\end{align}

\textbf{Step 2: Estimate of $I_2$}\\
Set $\sigma=s_1+\eps\eta$. On this integration region, we have $u_\eps(\sigma)=q_\eps(\eta)$. 
Integration by parts gives
\begin{align*}
    I_2(s)&=\frac\eps D\bigg[\tilde G_\Gamma(s,s_1+\eps\eta)q_\eps'(\eta)\bigg]_{-\delta/(2\eps)}^{\delta/(2\eps)}
    -\frac{\eps^2} D\int_{|\eta|<\delta/(2\eps)}\partial_\sigma\tilde G_\Gamma(s,s_1+\eps\eta)q_\eps'(\eta)d\eta\\
    &= O(\eps^4)-\frac{\eps^2} D\int_{|\eta|<\delta/(2\eps)}\partial_\sigma\tilde G_\Gamma(s,s_1+\eps\eta)q_\eps'(\eta)d\eta.
\end{align*}
By Taylor's theorem, 
\begin{align*}
    &\bigg|\int_{|\eta|<\delta/(2\eps)}\big(\partial_\sigma\tilde G_\Gamma(s,s_1+\eps\eta)-\partial_\sigma\tilde G_\Gamma(s,s_1)\big)q_\eps'(\eta)d\eta\bigg|\\
    &\le C\eps\int_{|\eta|<\delta/(2\eps)}|\eta q_\eps'(\eta)|d\eta\le C\eps.
\end{align*}
Combining these estimates with $\int_{|\eta|<\delta/(2\eps)}q_\eps'(\eta)\,d\eta=\Delta h(\bar v_\eps)+O(\eps^2)$ yields
\begin{align}\label{eq:STEP2-2}
    I_2(s)=-\frac{\eps^2\Delta h(\bar v_\eps)}{D}\partial_\sigma\tilde G_\Gamma(s,s_1)+O(\eps^3)
\end{align}

\textbf{Step 3: Estimate of $I_3$}\\
On the integration region for $I_3$, where $|\sigma-s_1|\ge\delta/2$ and $|\sigma-s_2|\ge\delta$, we have $\chi_2(\sigma)=0$. Moreover, when $\delta/2\le|\sigma-s_1|<\delta$, 
\begin{align*}
    |u_\eps''(\sigma)|&\le C\big\{\big|q_\eps\big((\sigma-s_1)/\eps\big)-u_0(\sigma)\big|+\eps^{-1}\big|q_\eps'\big((\sigma-s_1)/\eps\big)\big|+\eps^{-2}\big|q_\eps''\big((\sigma-s_1)/\eps\big)\big|\big\}\\
    &\le C\big\{\eps\cdot2\eps/\delta+\eps^{-1}\cdot\eps(2\eps/\delta)^2+\eps^{-2}\cdot\eps(2\eps/\delta)^3\big\}\\
    &=O(\eps^2)
\end{align*}
holds. In the remaining region, $u_\eps''(\sigma)=0$ holds.
Therefore, \eqref{eq:G_Gamma-est} implies
\begin{align*}
    |I_3(s)|\le C\eps^4\int_\Gamma|G_\Gamma(s,\sigma)|d\sigma\le C\eps^4\int_\Gamma\big(1+|\log d_\Gamma(s,\sigma)|\big)d\sigma=O(\eps^4).
\end{align*}
Thus, 
\begin{align}\label{eq:STEP3-2}
    I_3(s)=O(\eps^4)
\end{align}

\textbf{Step 4: Estimate of $I_4$}\\
On the integration region for $I_4$, we have $|\sigma-s_2|<\delta$. Since $|s-s_1|\le\delta$, the points $s$ and $\sigma$ remain uniformly separated. Consequently, $\partial_\sigma^jG_\Gamma(s,\sigma)$ for $j=0,1,2$ are uniformly bounded on this region. 
Moreover, $u_\eps'=O(\eps^2)$ for $\delta/2<|\sigma-s_2|<\delta$, and
\begin{align*}
    I_4(s)&=\frac{\eps^2}{D}\bigg[G_\Gamma(s,\sigma)u_\eps'(\sigma)\bigg]_{s_2-\delta}^{s_2+\delta}
    -\frac{\eps^2}{D}\int_{|\sigma-s_2|<\delta}\partial_\sigma G_\Gamma(s,\sigma)u_\eps'(\sigma)d\sigma\\
    &=0-\frac{\eps^2}{D}\int_{|\sigma-s_2|<\delta}\partial_\sigma G_\Gamma(s,\sigma)u_\eps'(\sigma)d\sigma.
\end{align*}
Using $\int_{|\sigma-s_2|<\delta}u_\eps'(\sigma)\,d\sigma=-\Delta h(\bar v_\eps)$, we obtain
\begin{align*}
    \bigg|I_4(s)-\frac{\eps^2\Delta h(\bar v_\eps)}{D}\partial_\sigma G_\Gamma(s,s_2)\bigg|
    &\le\frac{\eps^2}{D}\int_{|\sigma-s_2|<\delta}\big|\partial_\sigma G_\Gamma(s,\sigma)-\partial_\sigma G_\Gamma(s,s_2)\big||u_\eps'(\sigma)|d\sigma\\
    &\le C\eps^2\int_{|\sigma-s_2|<\delta}|\sigma-s_2|\big|u_\eps'(\sigma)\big|d\sigma\\
    &\le C\eps^2\bigg(\frac1\eps\int_{|\sigma-s_2|<\delta/2}|\sigma-s_2|\big|q_\eps'\big((s_2-\sigma)/\eps\big)\big|d\sigma+C\eps^2\bigg)\\
    &\overset{\sigma=s_2+\eps\eta}\le C\eps^3\int_{|\eta|<\delta/(2\eps)}|\eta q_\eps'(\eta)|d\eta +C\eps^4 \le C\eps^3.
\end{align*}
Hence, 
\begin{align}\label{eq:STEP4-2}
    I_4(s)=\frac{\eps^2\Delta h(\bar v_\eps)}{D}\partial_\sigma G_\Gamma(s,s_2)+O(\eps^3)
\end{align}

Combining \eqref{eq:STEP1-2}--\eqref{eq:STEP4-2} proves the desired result.

\bibliographystyle{unsrtnat}
\bibliography{reference}

@article{tse2010,
  title = {The {{Gierer}}--{{Meinhardt}} System on a Compact Two-Dimensional {{Riemannian}} Manifold: {{Interaction}} of {{Gaussian}} Curvature and {{Green}}'s Function},
  shorttitle = {The {{Gierer}}--{{Meinhardt}} System on a Compact Two-Dimensional {{Riemannian}} Manifold},
  author = {Tse, Wang Hung and Wei, Juncheng and Winter, Matthias},
  year = {2010},
  month = oct,
  journal = {Journal de Math\'ematiques Pures et Appliqu\'ees},
  volume = {94},
  number = {4},
  pages = {366--397},
  doi = {10.1016/j.matpur.2010.03.003},
  langid = {english},
}

@article{mori2008,
  title = {Wave-{{Pinning}} and {{Cell Polarity}} from a {{Bistable Reaction-Diffusion System}}},
  author = {Mori, Yoichiro and Jilkine, Alexandra and {Edelstein-Keshet}, Leah},
  year = {2008},
  month = may,
  journal = {Biophysical Journal},
  volume = {94},
  number = {9},
  pages = {3684--3697},
  doi = {10.1529/biophysj.107.120824},
}

@article{cusseddu2019,
  title = {A Coupled Bulk-Surface Model for Cell Polarisation},
  author = {Cusseddu, D. and {Edelstein-Keshet}, L. and Mackenzie, J.A. and Portet, S. and Madzvamuse, A.},
  year = {2019},
  month = nov,
  journal = {Journal of Theoretical Biology},
  volume = {481},
  pages = {119--135},
  doi = {10.1016/j.jtbi.2018.09.008},
  langid = {english},
}

@article{ramirez2015,
  title = {Dendritic Spine Geometry Can Localize {{GTPase}} Signaling in Neurons},
  author = {Ramirez, Samuel A. and Raghavachari, Sridhar and Lew, Daniel J.},
  editor = {{Edelstein-Keshet}, Leah},
  year = {2015},
  month = nov,
  journal = {Molecular Biology of the Cell},
  volume = {26},
  number = {22},
  pages = {4171--4181},
  doi = {10.1091/mbc.E15-06-0405},
  langid = {english},
}

@article{giese2015,
  title = {Influence of Cell Shape, Inhomogeneities and Diffusion Barriers in Cell Polarization Models},
  author = {Giese, Wolfgang and Eigel, Martin and Westerheide, Sebastian and Engwer, Christian and Klipp, Edda},
  year = {2015},
  month = nov,
  journal = {Physical Biology},
  volume = {12},
  number = {6},
  pages = {066014},
  doi = {10.1088/1478-3975/12/6/066014},
  langid = {english},
}

@article{borgqvist2021,
  title = {Cell Polarisation in a Bulk-Surface Model Can Be Driven by Both Classic and Non-Classic {{Turing}} Instability},
  author = {Borgqvist, Johannes and Malik, Adam and Lundholm, Carl and Logg, Anders and Gerlee, Philip and Cvijovic, Marija},
  year = {2021},
  month = feb,
  journal = {npj Systems Biology and Applications},
  volume = {7},
  number = {1},
  pages = {13},
  publisher = {Nature Publishing Group},
  doi = {10.1038/s41540-021-00173-x},
  langid = {english},
}

@article{morita2020,
  title = {Turing Type Instability in a Diffusion Model with Mass Transport on the Boundary},
  author = {Morita, Yoshihisa and Sakamoto, Kunimochi},
  year = {2020},
  journal = {Discrete and Continuous Dynamical Systems},
  volume = {40},
  number = {6},
  pages = {3813--3836},
  doi = {10.3934/dcds.2020160},
  langid = {english},
}

@article{ratz2014,
  title = {Symmetry Breaking in a Bulk--Surface Reaction--Diffusion Model for Signalling Networks},
  author = {R{\"a}tz, Andreas and R{\"o}ger, Matthias},
  year = {2014},
  month = aug,
  journal = {Nonlinearity},
  volume = {27},
  number = {8},
  pages = {1805--1827},
  doi = {10.1088/0951-7715/27/8/1805},
  langid = {english},
}

@article{hausberg2018,
  title = {Well-Posedness and Fast-Diffusion Limit for a Bulk--Surface Reaction--Diffusion System},
  author = {Hausberg, Stephan and R{\"o}ger, Matthias},
  year = {2018},
  month = apr,
  journal = {Nonlinear Differential Equations and Applications NoDEA},
  volume = {25},
  number = {3},
  pages = {17},
  doi = {10.1007/s00030-018-0508-8},
  langid = {english},
}

@article{miller2023,
  title = {Generation and {{Motion}} of {{Interfaces}} in a {{Mass-Conserving Reaction-Diffusion System}}},
  author = {Miller, Pearson W. and Fortunato, Daniel and Novaga, Matteo and Shvartsman, Stanislav Y. and Muratov, Cyrill B.},
  year = {2023},
  month = sep,
  journal = {SIAM Journal on Applied Dynamical Systems},
  volume = {22},
  number = {3},
  pages = {2408--2431},
  doi = {10.1137/22M152548X},
  langid = {english},
}

@article{cusseddu2022,
  title = {Numerical Investigations of the Bulk-Surface Wave Pinning Model},
  author = {Cusseddu, Davide and Madzvamuse, Anotida},
  year = {2022},
  month = dec,
  journal = {Mathematical Biosciences},
  volume = {354},
  pages = {108925},
  doi = {10.1016/j.mbs.2022.108925},
  langid = {english},
}

@article{wei2001,
  title = {Spikes for the {{Two-Dimensional Gierer-Meinhardt System}}: {{The Weak Coupling Case}}},
  shorttitle = {Spikes for the {{Two-Dimensional Gierer-Meinhardt System}}},
  author = {Wei, J. and Winter, M.},
  year = {2001},
  month = dec,
  journal = {Journal of Nonlinear Science},
  volume = {11},
  number = {6},
  pages = {415--458},
  doi = {10.1007/s00332-001-0380-1},
  langid = {english},
}

@article{fife1973,
  title = {Semilinear Elliptic Boundary Value Problems with Small Parameters},
  author = {Fife, Paul C.},
  year = {1973},
  month = sep,
  journal = {Archive for Rational Mechanics and Analysis},
  volume = {52},
  number = {3},
  pages = {205--232},
  doi = {10.1007/BF00247733},
  langid = {english},
}

@article{ishii2021,
  title = {Concentration Phenomena on {{Y}}--Shaped Metric Graph for the {{Gierer}}--{{Meinhardt}} Model with Heterogeneity},
  author = {Ishii, Yuta},
  year = {2021},
  month = apr,
  journal = {Nonlinear Analysis},
  volume = {205},
  pages = {112220},
  doi = {10.1016/j.na.2020.112220},
  langid = {english},
}

@article{mori2011,
  title = {Asymptotic and {{Bifurcation Analysis}} of {{Wave-Pinning}} in a {{Reaction-Diffusion Model}} for {{Cell Polarization}}},
  author = {Mori, Yoichiro and Jilkine, Alexandra and Edelstein-Keshet, Leah},
  year = {2011},
  journal = {SIAM Journal on Applied Mathematics},
  volume = {71},
  number = {4},
  pages = {1401--1427},
  doi = {10.1137/10079118X},
  langid = {english},
}

@article{watanabe2026,
  title = {Geometry–Induced Pulse Dynamics in a Bulk–Surface Reaction–Diffusion System for Cell Polarization},
  author = {Watanabe, Riku},
  year = {2026},
  journal = {Journal of Mathematical Biology},
  shortjournal = {J. Math. Biol.},
  volume = {93},
  number = {5},
  pages = {53},
  doi = {10.1007/s00285-026-02473-4},
  langid = {english},
}
\end{document}